\newtheorem{theorem}{Theorem}[section]
\newtheorem{lemma}[theorem]{Lemma}
\theoremstyle{remark}
\newtheorem{remark}[theorem]{Remark}
\newtheorem{definition}[theorem]{Definition}
\newcommand{\coloremphasize}{\relax}
\numberwithin{equation}{section}
\def\cC{{\mathfrak{C}}}
\def\N{{\mathbb{N}}}
\def\Z{{\mathbb{Z}}}
\def\Rl{{\mathbb{R}}}
\def\Cx{{\mathbb{C}}}
\def\sH{{\mathcal{H}}}
\def\tr{{\mathrm{Tr}\,}}
\def\mL{{\mathcal{L}}}
\def\sH{{\mathbb{H}}}
\def\W{{{W}}}
\def\Pl{{{P}}}
\begin{document}

\title{Spectral shift function of higher order}

\author[Potapov]{Denis Potapov$^{*}$}
\address{School of Mathematics and Statistics, University of New South Wales, Kensington, NSW 2052, Australia}
\email{d.potapov@unsw.edu.au}

\author[Skripka]{Anna Skripka$^{**}$}
\address{Department of Mathematics and Statistics, MSC01 1115, 1 University of New Mexico, Albuquerque, NM  87131}
\email{skripkaan@gmail.com}

\author[Sukochev]{Fedor Sukochev$^{*}$} \address{School of Mathematics
  and Statistics, University of New South Wales, Kensington, NSW 2052,
  Australia} \email{f.sukochev@unsw.edu.au}

\thanks{\footnotesize $^{*}$Research supported in part by ARC}
\thanks{\footnotesize $^{**}$Research supported in part by NSF grant
  DMS-0900870 and by AWM-NSF Mentoring Travel Grant}

\subjclass[2000]{Primary 47A55, 47A56; secondary 46L52}

\keywords{Spectral shift function, multiple operator integral.}

\date{\today}

\maketitle

\begin{abstract}
  This paper resolves affirmatively Koplienko's conjecture of 1984 on
  existence of higher order spectral shift measures. Moreover, the
  paper establishes absolute continuity of these measures and, thus,
  existence of the higher order spectral shift functions. A spectral
  shift function of order~$n \in \N$ is the function~$\eta_n =
  \eta_{n, H, V}$ such that
  \begin{equation}
    \label{nTraceFormula}
    \tr \left( f(H + V)-\sum_{k = 0}^{n-1} \frac 1{k!}\, \frac {d^k}{dt^k}
      \left[ f(H + tV) \right]\Bigr|_{t = 0} \right) = \int_\Rl f^{(n)}
    (t)\, \eta_n  (t)\, dt,
  \end{equation}
  for every sufficiently smooth function~$f$, where~$H$ is a
  self-adjoint operator defined in a separable Hilbert space $\sH$ and
  $V$ is a self-adjoint operator in the $n$-th Schatten-von Neumann
  ideal $S^n$. Existence and summability of $\eta_1$ and $\eta_2$ were
  established by Krein in 1953 and Koplienko in 1984, respectively,
  whereas for~$n > 2$ the problem was unresolved. We show that~$\eta_{n, H, V}$ exists, integrable,
  and $$ \left\| \eta_n \right\|_{L^1(\Rl)} \leq c_n \, \left\| V
  \right\|_{S^n}^n, $$ for some constant~$c_n$ depending only on~$n
  \in \N$. Our results for $\eta_n$ rely on estimates for multiple operator integrals obtained in this paper. Our method also applies to the general semi-finite von Neumann algebra setting of the perturbation theory.
\end{abstract}

\section{Introduction}
\label{sec:intro}

The first order spectral shift function $\eta_1$ originated from
Lifshits' work on theoretical physics \cite{Lifshits}. The
mathematical theory of this object was founded by Krein in a series of
papers, starting with \cite{Krein}. The spectral shift function has
become a fundamental object in perturbation theory. It can also be
recognized as the scattering phase \cite{BirmanKrein} and the spectral
flow in a non-commutative geometry setting \cite{ACS}. The original
spectral shift function applies only in the case of trace class
perturbations (or trace class differences of the resolvents). The
modified second order spectral shift function for Hilbert-Schmidt
perturbations was introduced by Koplienko in \cite{Kop84}. In 1984,
Koplienko also conjectured existence of the higher order spectral
shift measures $\nu_n$, $n>2$, for the perturbation $V$ in $S^n$ such
that
\begin{equation}
\label{nTrnu}
\tr \left( f(H + V)-\sum_{k = 0}^{n-1} \frac 1{k!}\, \frac {d^k}{dt^k}
  \left[ f(H + tV) \right]\Bigr|_{t = 0} \right) = \int_\Rl f^{(n)}
(t)\,d\nu_n(t).
\end{equation}
In \cite{Kop84}, Koplienko offered a proof which contained an
unremovable gap (see \cite{ds} for details and \cite{BirmanY,GPS} for
additional comments and historical information). In this paper, we
prove existence and absolute continuity of $\nu_n$. Note that the
density of $\nu_n$ is the spectral shift function $\eta_n$.

Before stating the main result, we need to fix some notation.
Throughout the paper, $n$ denotes a natural number.  Let~$C^n$ denote
the space of all $n$~times continuously differentiable complex-valued
functions on $\Rl$, $C^n_c$ the subclass of~$C^n$ of compactly
supported functions. We also set~$C := C^0$ and $C_c := C^0_c$ and let
$C_b$ denote the subclass of continuous bounded functions.  Let~$\W_n
\subseteq C^n$ be the class of all functions~$f \in C^n$ such that the
Fourier transform of~$f^{(n)}$ is integrable.  For~$f \in \W_n$, we
set $$ \left\| f \right\|_{\W_n} := \int_{\Rl} \left|
  \widehat{f^{(n)}} (s) \right|\, ds.$$ The class $\W_n$ includes the
functions $f$ for which $f^{(n)}$ and $f^{(n+1)}$ are in $L^2(\Rl)$
\cite[Lemma 7]{PS-Crelle}. In particular, $C^{n+1}_c\subset\W_n$.

The first and second order spectral shift functions have been
introduced in the von Neumann algebra setting as well.  Let~$M
\subseteq B(\sH)$ be a semifinite von Neumann algebra and~$\tau$ a
normal faithful semifinite trace on~$M$.  Let~$L^\alpha$ denote the
noncommutative $L^\alpha$-space with respect to~$(M,\tau)$ and
$\mL^\alpha$ the $\tau$-Schatten-von Neumann ideal $L^\alpha\cap M$
(see, e.g., \cite{AzCaDoSu2009, PiXu2003} and references cited therein
for basic definitions and facts). The existence of $\eta_1$ and
$\eta_2$ satisfying~(\ref{nTraceFormula}), where the standard trace
$\tr$ is replaced with $\tau$ and $V$ is taken from $\mL^1$ and
$\mL^2$, respectively, is due to \cite{ADS06,CareyPincus} and
\cite{ds}.

Our result on the spectral shift functions is stated below.

\begin{theorem}
\label{SSFtheorem}
Let $n\in\N$. Let~$H$ be a self-adjoint operator affiliated with~$M$
and let~$V$ be a self-adjoint operator in $\mL^n$. Denote $H_t: = H +
tV$, $0 \leq t \leq 1$, and let $f\in \cap_{k=0}^n\W_k$. Denote
   \begin{equation}
   \label{remdef}\Delta_{n, f} (H, V):= f\left( H+V \right) - \sum_{k =
      0}^{n-1} \frac 1{k!} \, \frac {d^k}{dt^k} \left[ f\left( H_t
      \right) \right] \biggr|_{t = 0}.
  \end{equation}
Then~$\Delta_{n, f} (H, V) \in \mL^1$ and
there is a unique function~$\eta_n = \eta_{n, H, V} \in L^1(\Rl)$ depending only on $n,H,V$ such that
    \begin{equation}
      \label{SSFfunctionRel}
      \tau \left( \Delta_{n, f} (H, V) \right) = \int_{\Rl}f^{(n)} (t)\, \eta_n (t)\, dt
    \end{equation}
    and $$ \left\| \eta_n \right\|_1 \leq \, c_n \, \left\| V\right\|_n^n. $$
\end{theorem}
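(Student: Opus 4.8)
The plan is to reduce the trace formula \eqref{SSFfunctionRel} to a statement about multilinear operator integrals and then to pass to the spectral shift density through a fixed-point / duality argument, following the strategy of \cite{PS-Lipschitz}. First I would establish the $n$-th order Taylor remainder formula: writing $\Delta_{n,f}(H,V)$ via the integral remainder of Taylor's theorem, one expresses it as an $n$-fold operator integral
$$
\Delta_{n,f}(H,V) = \int_0^1 \frac{(1-s)^{n-1}}{(n-1)!}\, \frac{d^n}{dt^n}\Bigl[f(H_t)\Bigr]\Bigr|_{t=s}\, ds,
$$
and then invoke the known formula for the $n$-th Gâteaux derivative of $t\mapsto f(H_t)$ as a multiple operator integral $T^{H_s,\dots,H_s}_{f^{[n]}}(V,\dots,V)$ associated with the divided difference $f^{[n]}$. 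For $f\in\W_n$ this operator integral is well-defined on $\mL^n\times\cdots\times\mL^n$ and, by a Hölder-type bound for multiple operator integrals, it maps into $\mL^1$ with norm controlled by $\|f\|_{\W_n}\,\|V\|_n^n$; this already gives $\Delta_{n,f}(H,V)\in\mL^1$.

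Next I would show that the linear functional $f^{(n)}\mapsto \tau(\Delta_{n,f}(H,V))$, a priori defined on the image of $\W_n$ under the $n$-th derivative, extends to a bounded functional on $C_b$ (or on $C_0$) with norm at most $c_n\|V\|_n^n$. The key estimate is
$$
\bigl|\tau(\Delta_{n,f}(H,V))\bigr| \le c_n\, \|f^{(n)}\|_\infty\, \|V\|_n^n,
$$
which is the heart of the matter: it says the remainder trace depends on $f$ only through an $L^\infty$-bound on $f^{(n)}$. I expect to obtain this by estimating the multiple operator integral representation of $\Delta_{n,f}$ not in the $\W_n$-norm but using that the symbol $f^{[n]}$ can be handled through a representation of $f^{(n)}$ as a combination of bounded functions, exploiting a change-of-variables identity that trades one copy of $f^{[n]}$ for $f^{(n)}$ evaluated on the diagonal together with differences of resolvents or spectral projections; this is exactly where the $S^n$-perturbation is consumed to produce an $\mL^1$-estimate. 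Once this bound holds, the Riesz–Markov theorem yields a finite (complex) measure $\nu_n$ on $\Rl$ with $\tau(\Delta_{n,f}(H,V)) = \int_\Rl f^{(n)}\,d\nu_n$ and $\|\nu_n\| \le c_n\|V\|_n^n$; uniqueness of $\nu_n$ follows since $\{f^{(n)}: f\in\cap_k\W_k\}$ is dense in $C_0$.

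Finally I would upgrade absolute continuity of $\nu_n$: this is where the argument genuinely uses the method of \cite{PS-Lipschitz} rather than classical techniques. The idea is to bound $\nu_n$ of an arbitrary Borel set, or equivalently to control $\int g\,d\nu_n$ for $g\in L^\infty$ by $\|g\|_1$-type quantities on intervals, by choosing $f$ with $f^{(n)}=g$ and estimating $\tau(\Delta_{n,f}(H,V))$ through a reduction to the already-established first-order (Lipschitz) estimates in $\mL^p$-spaces for $1<p<\infty$, interpolating and then letting $p\downarrow 1$; the $\mL^p$-boundedness of the relevant double operator integral transformers for $1<p<\infty$ is the input from \cite{PS-Lipschitz}. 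The main obstacle, and the step I would spend the most care on, is the sharp $\mL^1$-valued estimate for the multiple operator integral with a bounded (not smooth) symbol $f^{[n]}$: naive bounds only give $\mL^{1,\infty}$ or require a $\W_n$-norm, and extracting a genuine $L^1$-density (absolute continuity, not just a finite measure) requires the $p\to 1$ limiting argument together with a careful bookkeeping of how the $n$ factors of $V\in\mL^n$ distribute across the $n$ resolvent differences.
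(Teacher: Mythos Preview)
Your outline through the Riesz representation step is essentially the paper's route: the integral remainder formula, the multiple operator integral expression for $\frac{d^n}{dt^n}[f(H_t)]$, the $\mL^1$-membership via $\|f\|_{\W_n}$, and then the key bound $|\tau(\Delta_{n,f}(H,V))|\le c_n\|f^{(n)}\|_\infty\|V\|_n^n$ leading to a finite measure $\nu_n$ by Riesz--Markov. All of that matches.

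The gap is in your absolute-continuity step. The mechanism you sketch---bound $\int g\,d\nu_n$ by ``$\|g\|_1$-type quantities'' via $\mL^p$-estimates and let $p\downarrow 1$---does not lead anywhere. A bound by $\|g\|_1$ would place $d\nu_n/dt$ in $L^\infty$, not in $L^1$; and in any case the constants in the $\mL^p$-bounds for the relevant operator integrals blow up as $p\to 1$ (this is exactly why the paper's Theorem~\ref{MainTheoremExt} is stated only for $1<\alpha_j<\infty$), so no limiting argument of this kind is available. You have correctly identified that the key estimate only delivers a finite measure, but you have not supplied an idea that upgrades it to a density.

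The paper's argument for absolute continuity is completely different and does not use any $p\to 1$ limit. For $n\ge 2$ one first assumes $V\in\mL^1$ (so all lower-order objects exist) and writes
\[
\Delta_{n,f}(H,V)=\Delta_{n-1,f}(H,V)-\tfrac{1}{(n-1)!}\,\tfrac{d^{n-1}}{dt^{n-1}}[f(H_t)]\big|_{t=0}.
\]
By the $(n-1)$-st order key estimate, each term on the right is represented by integration of $f^{(n-1)}$ against a finite measure ($\nu_{n-1}$ and $\mu_{n-1}$ respectively). Integration by parts then turns $\int f^{(n-1)}\,d\mu$ into $-\int f^{(n)}(t)\,\mu((-\infty,t))\,dt$, so one reads off the explicit density
\[
\eta_n(t)=\mu_{n-1}((-\infty,t))-\nu_{n-1}((-\infty,t)),
\]
which is automatically in $L^1$ with $\|\eta_n\|_1\le c_n\|V\|_n^n$ by the already-established total variation bound on $\nu_n$. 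The general case $V\in\mL^n$ is then obtained by approximating $V$ in $\mL^n$-norm by $V_k\in\mL^1$ and showing $\{\eta_{n,H,V_k}\}$ is Cauchy in $L^1(\Rl)$, using the uniform continuity of $V\mapsto\tau(\Delta_{n,f}(H,V))$ (which is part of Theorem~\ref{MainTheoremCor}). This recursive integration-by-parts trick is the missing idea in your plan.
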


The existence of $\nu_1$ via double operator integration techniques
was established in \cite{BirmanS}. The absolute continuity of $\nu_1$,
or, equivalently, the existence of $\eta_1$, was established by Krein
years before the development of double operator integration by
analytic function and approximation theory techniques. Up to date,
there is no double operator integral proof of the absolute continuity
of $\eta_1$. The proof of the existence of $\eta_2$ is due to
Koplienko; it is a more delicate application of double operator
integration. The existence of $\eta_2$ is also established in the
present paper.

Our proof of Theorem \ref{SSFtheorem} (see Section
\ref{sec:main_result}) for $n\geq 2$ is based on a new powerful
estimate for multiple operator integrals (see Theorem
\ref{MainTheoremCor} in Section \ref{sec:main_result} and Theorem
\ref{MainTheoremExt} and Remark \ref{MTER} in Section
\ref{sec:doi-fn}), which extends the advances of \cite{PS-Lipschitz}
for the first order case to the higher order ones.  The spectral shift
function of order $n$, with $n>2$, was constructed explicitly in
\cite{ds,unbdds,moissf} under the restrictive assumptions $V\in \mL^2$
when $M=B(\sH)$ and $V\in \mL^2$, $n=3$ when $M$ is a general
semi-finite von Neumann algebra. The higher order $\eta_n$ can be
expressed recursively via the lower order $\eta_k$, $k\leq n$.

The paper is organized as follows. The proof of Theorem
\ref{SSFtheorem} is given in Section \ref{sec:main_result}. The
technically involved auxiliary results are proved in several steps in
subsequent sections. A novel approach to multiple operator integrals
suitable to the purposes of this paper is discussed in Section
\ref{sec:mod-new}.

\subsection*{Acknowledgements} We are thankful to the referees for
useful comments which improved the exposition of the paper.

\section{Proof of Theorem \ref{SSFtheorem}}
\label{sec:main_result}

Our proof is based on the following powerful estimate for the
remainder of the Taylor-type approximation in~(\ref{SSFfunctionRel}).

\begin{theorem}
  \label{MainTheoremCor}
  Let $n\in\N$. Let~$H$ be a self-adjoint operator affiliated with~$M$
  and let~$V$ be a self-adjoint operator in $\mL^n$. Denote~$H_t: = H
  + tV$, $0 \leq t \leq 1$, and let~$f\in \cap_{k=0}^n\W_k$. Then
  $\frac{d^n}{dt^n}\left[f\left(H_t\right)\right]\in \mL^1$ and
  $\Delta_{n,f}(H,V)\in \mL^1$ and there are constants $c_n$ and
  $c_n'$ (depending only on $n$) such that the estimates
\begin{equation}
\label{TextPrincipalEstimateD}
\left|\tau\left(\frac {d^n}{dt^{n}}\left[f\left(H_t\right)\right]\right)\right|\leq c_n\, \left\|
  f^{(n)} \right\|_\infty\, \left\| V\right\|_n^n
\end{equation}
and
\begin{equation}
\label{TextPrincipalEstimate}
\left|\tau \left(\Delta_{n, f} (H, V) \right) \right| \leq \, c_n'\, \left\|
        f^{(n)} \right\|_\infty\, \left\| V\right\|_n^n
\end{equation} hold.
In addition, the mapping $V\mapsto \tau\left(\Delta_{n, f} (H,
   V)\right)$ is continuous on $\mL^n$ uniformly with respect to $f$
 in $\left(\cap_{k=0}^n\W_k\right)\cap B$, where $B$ is the unit ball
 of $C^{n}$ taken with the seminorm $f\mapsto \|f^{(n)}\|_\infty$.
\end{theorem}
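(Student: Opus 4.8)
The plan is to reduce all three assertions to a single analytic input, the sharp multiple operator integral estimate of Theorem~\ref{MainTheoremExt}. The starting point is the standard representation of the $n$-th Gâteaux derivative as a divided-difference multiple operator integral,
\[
\frac{1}{n!}\,\frac{d^n}{dt^n}\bigl[f(H_t)\bigr]
= T^{H_t,\dots,H_t}_{f^{[n]}}\bigl(\underbrace{V,\dots,V}_{n}\bigr),
\]
where $f^{[n]}$ is the $n$-th divided difference and $T$ carries $n+1$ copies of $H_t$ and $n$ perturbation slots, together with its ``telescoped'' form
\[
\Delta_{n,f}(H,V)=T^{H+V,H,\dots,H}_{f^{[n]}}\bigl(\underbrace{V,\dots,V}_{n}\bigr),
\]
obtained by iterating the first-order perturbation identity $g(H+V)-g(H)=T^{H+V,H}_{g^{[1]}}(V)$ and the algebraic identity relating multiple operator integrals of consecutive orders. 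For $f\in\W_n$ the divided difference $f^{[n]}$ admits a Fourier-type representation whose natural multiple-operator-integral norm is controlled (crudely) by $\|f\|_{\W_n}$, so the basic theory of multiple operator integrals guarantees that these objects are well defined; since $V\in\mL^n$, Hölder's inequality for $\tau$-Schatten norms places the $n$-fold product structure of the integral in $\mL^1$, giving $\frac{d^n}{dt^n}[f(H_t)]\in\mL^1$ and $\Delta_{n,f}(H,V)\in\mL^1$. When $H$ is merely affiliated with $M$ I would pass through the bounded truncations $H\chi_{[-R,R]}(H)$ together with an approximation of $f$ by functions in $C^{n+1}_c\subset\cap_{k=0}^n\W_k$ to justify the two displays and the $\mL^1$-membership in full generality.

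Granting this, estimates \eqref{TextPrincipalEstimateD} and \eqref{TextPrincipalEstimate} are immediate. Theorem~\ref{MainTheoremExt} upgrades the crude $\|f\|_{\W_n}$-bound to
\[
\Bigl\|T^{A_0,\dots,A_n}_{f^{[n]}}(V_1,\dots,V_n)\Bigr\|_1
\le c_n\,\|f^{(n)}\|_\infty\,\prod_{j=1}^n\|V_j\|_n
\]
for self-adjoint $A_0,\dots,A_n$ affiliated with $M$ and $V_1,\dots,V_n\in\mL^n$ (with a multilinear refinement), the constant being uniform in the operators. Applying this with $A_0=\dots=A_n=H_t$ and $V_1=\dots=V_n=V$ gives $\|\frac{d^n}{dt^n}[f(H_t)]\|_1\le c_n n!\,\|f^{(n)}\|_\infty\|V\|_n^n$, uniformly in $t\in[0,1]$, whence \eqref{TextPrincipalEstimateD} since $|\tau(X)|\le\|X\|_1$; applying it with $A_0=H+V$ and $A_1=\dots=A_n=H$ gives \eqref{TextPrincipalEstimate} (alternatively one integrates \eqref{TextPrincipalEstimateD} against the Taylor-remainder weight $(1-s)^{n-1}/(n-1)!$, using that $\Delta_{n,f}(H,V)=\frac{1}{(n-1)!}\int_0^1(1-s)^{n-1}\frac{d^n}{ds^n}[f(H_s)]\,ds$ as a Bochner integral in $\mL^1$).

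For the continuity statement I would estimate the increment $\tau(\Delta_{n,f}(H,V))-\tau(\Delta_{n,f}(H,V'))$ by telescoping the representation of $\Delta_{n,f}$. The two multiple operator integrals $T^{H+V,H,\dots,H}_{f^{[n]}}(V^{\times n})$ and $T^{H+V',H,\dots,H}_{f^{[n]}}(V'^{\times n})$ differ only in their first operator argument (by $V-V'$) and in their $n$ perturbation slots (each by $V-V'$); changing these one at a time expresses the difference as a finite $\Cx$-linear combination of multiple operator integrals, each containing a factor $V-V'$ in some slot, with symbols $f^{[n]}$ and (from the operator-argument change) $f^{[n+1]}$. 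Using that a multiple operator integral with a repeated operator argument collapses, via the divided-difference identities, to a lower-order one --- the same mechanism by which $\Delta_{n,f}(H,V)$, a difference of non-trace-class terms, nonetheless lies in $\mL^1$ --- the terms can be organized so that the multilinear form of Theorem~\ref{MainTheoremExt} bounds the $\mL^1$-norm of each, hence of the increment, by a constant multiple of $\bigl(\|V\|_n+\|V'\|_n\bigr)^{n-1}\|V-V'\|_n$, the constant depending on $f$ only through $\|f^{(n)}\|_\infty\le1$ for $f\in B$; taking $\tau$ yields the asserted continuity of $V\mapsto\tau(\Delta_{n,f}(H,V))$, uniform over $f\in\bigl(\cap_{k=0}^n\W_k\bigr)\cap B$. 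A preliminary reduction to bounded $V$ (via the uniform estimate \eqref{TextPrincipalEstimate}) and to smooth $f$ (via density, to legitimize differentiation in the operator argument) may be used to justify these manipulations before passing to the limit.

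The decisive obstacle is Theorem~\ref{MainTheoremExt} itself: the passage from the soft $\|f\|_{\W_n}$-control of divided-difference multiple operator integrals to the sharp $\|f^{(n)}\|_\infty$-control compatible with the $\mL^n$-geometry is the higher-order analogue of the operator-Lipschitz estimate of \cite{PS-Lipschitz} and absorbs essentially all of the real work (carried out in Section~\ref{sec:doi-fn}). The remaining difficulties are of a bookkeeping nature: justifying the differentiation formula and the Bochner/operator-integral manipulations when $H$ is unbounded and $f$ lies only in $\cap_{k=0}^n\W_k$ (handled by truncation in $H$ and approximation in $f$, keeping the crude norms finite along the way), and organizing the telescoping in the continuity step so that only the divided difference $f^{[n]}$ ultimately intervenes in the bounds.
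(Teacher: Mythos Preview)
Your overall strategy is sound, but there is a genuine gap in how you invoke Theorem~\ref{MainTheoremExt}. That theorem requires the \emph{strict} inequality $0<\frac{1}{\alpha_1}+\dots+\frac{1}{\alpha_n}<1$, so it cannot be applied with $\alpha_1=\dots=\alpha_n=n$ to produce an $\mL^1$-bound on $T_{f^{[n]}}(V,\dots,V)$; the endpoint is explicitly excluded. Consequently the display you write, $\bigl\|T^{A_0,\dots,A_n}_{f^{[n]}}(V_1,\dots,V_n)\bigr\|_1\le c_n\|f^{(n)}\|_\infty\prod_j\|V_j\|_n$, is not what Theorem~\ref{MainTheoremExt} provides, and the step ``whence \eqref{TextPrincipalEstimateD} since $|\tau(X)|\le\|X\|_1$'' fails. (A secondary issue: Theorem~\ref{MainTheoremExt} and indeed Definition~\ref{nmoi} are formulated for a \emph{single} spectral measure, so your version with distinct $A_0,\dots,A_n$ is not available; the integral-remainder route you mention as an alternative is the one the paper actually takes, since at each fixed $t$ only $H_t$ appears.)

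The paper sidesteps the endpoint obstruction by a trace-cyclicity trick: one rewrites
\[
\tau\bigl(T_{t,f^{[n]}}(V,\dots,V)\bigr)=\tau\bigl(T_{t,\phi}(\underbrace{V,\dots,V}_{n-1})\cdot V\bigr),
\]
where $\phi(\lambda_0,\dots,\lambda_{n-1})=f^{[n]}(\lambda_0,\lambda_0,\lambda_1,\dots,\lambda_{n-1})$ is a polynomial integral momentum $\phi_{n-1,f^{(n)},p}$ with $p(s_1,\dots,s_{n-1})=1-\sum_j s_j$. Now Theorem~\ref{MainTheoremExt} applies with $n-1$ arguments and $\alpha_j=n$, giving $\|T_{t,\phi}(V,\dots,V)\|_{n/(n-1)}\le c_n\|f^{(n)}\|_\infty\|V\|_n^{n-1}$; H\"older with the remaining $V$ yields \eqref{SSFRemainderObj} and hence \eqref{TextPrincipalEstimateD} and \eqref{TextPrincipalEstimate}. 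This is precisely why the paper proves Theorem~\ref{MainTheoremExt} for general polynomial integral momenta rather than only for divided differences: the collapsed symbol carries a nonconstant weight. The same trace-level estimate, in its multilinear form $|\tau(T_{t,f^{[n]}}(x_1,\dots,x_n))|\le c_n\prod_j\|x_j\|_n$, then handles the continuity statement by telescoping in the $V$-slots alone; your proposed route through $f^{[n+1]}$ would require control of $\|f^{(n+1)}\|_\infty$, which is not part of the hypothesis $f\in\cap_{k=0}^n\W_k$.
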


{\coloremphasize
\begin{remark}
  Theorem~\ref{MainTheoremCor} significantly improves earlier known
  estimates of this type.  For instance, it follows
  from~\cite{PellerMult} that
  \begin{equation*}
    \left|\tau\left(\frac {d^n}{dt^{n}}\left[f
          \left(H_t\right)\right]\right)\right|\leq c_n\, \left\|
      f \right\|_{B_{\infty1}^n(\Rl)}\, \left\| V\right\|_n^n,
  \end{equation*} where $f$ is in the Besov class~$B_{\infty1}^n(\Rl)$.
  However, the norm $\left\| f \right\|_{B_{\infty1}^n(\Rl)}$ is
  greater, than the norm $\|f^{(n)}\|_\infty$, so this estimate is
  weaker than~(\ref{TextPrincipalEstimateD}).
\end{remark}
}

The proof of Theorem~\ref{MainTheoremCor} is given in the following
sections.  Assuming that Theorem~\ref{MainTheoremCor} holds, we prove
Theorem~\ref{SSFtheorem}.

\begin{proof}[Proof of Theorem~\ref{SSFtheorem}]

  By the inequality~(\ref{TextPrincipalEstimate}) applied to all $f\in
  C_c^{n+1}$, the Riesz representation theorem for a bounded linear
  functional on the space of continuous functions on a compact set
  ensures existence of a unique finite real-valued measure $\nu_n$ on
  $\Rl$, with
\begin{equation}\label{estimate}
\left\|\nu_n\right\|\leq c_n\, \left\| V \right\|_n^n,
\end{equation} and such that
  \begin{equation}
    \label{ssmeasure}
    \tau \left( \Delta_{n, f} (H, V) \right) = \int_{\Rl} f^{(n)}
    (t)\, d\nu_n (t).
  \end{equation}
  {\coloremphasize We extend~(\ref{ssmeasure}) from the subset
    $C_c^{n+1}$ 
    to the whole $\cap_{k=0}^n\W_k$ by approximations.
  }

  To finish the proof, we need to demonstrate the absolute continuity
  of~$\nu_n$ for $n\geq 2$ (absolute continuity of~$\nu_1$ was
  established in \cite{ADS06,CareyPincus,Krein}).

  Firstly, we assume that $V\in \mL^1\subseteq\mL^n$ and, therefore,
  Theorem \ref{MainTheoremCor} guarantees that $\Delta_{n,f}(H,V)\in
  \mL^1$. Then, for every $f\in C_c^n$, integration by parts gives
  \begin{equation}
    \label{parts1}
    \tau\left(\Delta_{n-1,f}(H,V)\right)=\int_\Rl
    f^{(n-1)}(t)\,d\nu_{n-1}(t) =-\int_\Rl
    f^{(n)}(t)\nu_{n-1}((-\infty,t))\,dt.
  \end{equation}
  By~(\ref{TextPrincipalEstimateD}), we have
  \begin{equation*}
    \left|\tau\left(\frac {d^{n-1}}{dt^{n-1}} \left[ f(H_t)
        \right]\right)\right| \leq c_{n-1}\, \left\|
      f^{(n-1)}\right\|_\infty\, \left\| V\right\|_{n-1}^{n-1},\ \
    f\in C_c^n,
  \end{equation*}
  and thus the Riesz representation theorem implies the existence of a
  unique finite measure~$\mu_{n-1}$ on~$\Rl$ such that
  \begin{align}
    \label{parts2}
    \nonumber \frac{1}{(n-1)!}\tau\left(\frac {d^{n-1}}{dt^{n-1}}
      \left[ f(H_t) \right]\right)&=\int_\Rl
    f^{(n-1)}(t)\,d\mu_{n-1}(t)\\&=-\int_\Rl
    f^{(n)}(t)\mu_{n-1}((-\infty,t))\,dt.
  \end{align}
  By combining~(\ref{parts1}) and~(\ref{parts2}) in~(\ref{remdef}), we
  obtain
  \begin{equation*}
    \tau\left(\Delta_{n,f}(H,V)\right)
    =\int_\Rl
    f^{(n)}(t)\big[\mu_{n-1}((-\infty,t))-\nu_{n-1}((-\infty,t))\big]\,dt,
  \end{equation*}
  which along with~(\ref{ssmeasure}) implies that $\nu_n$ is
  absolutely continuous and its density equals \[\eta_n(t)= \mu_{n-1}
  ((-\infty,t)) - \nu_{n-1}((-\infty,t)).\] Due to~(\ref{estimate}),
  we have that $\|\eta_n\|_1\leq c_n \|V\|_n^n$. Thus, the existence
  of the spectral shift function of order $n$ is proved for $V \in
  \mL^1$.

  To prove the existence of $\eta_n$ in the case of a general $V\in
  \mL^n$, we choose a sequence of operators $\{V_k\}_k \subseteq
  \mL^1$ such that $ \lim_{k\rightarrow\infty} \|V-V_k\|_n = 0$ (see,
  e.g., \cite{ChilinS}). We now show that the sequence of the
  (integrable) spectral shift functions $\{\eta_{n,H,V_k}\}_k$ is
  Cauchy in $L^1(\Rl)$. First, by duality, we obtain
\begin{align*}
  &\int_\Rl\left|\eta_{n,H,V_j}(t)-\eta_{n,H,V_k}(t)\right|\,dt\\
  &\quad=\sup_{f\in C^{n+1}_c,\; \|f^{(n)}\|_\infty\leq
    1}\left|\int_\Rl\left(\eta_{n,H,V_j}(t)
      -\eta_{n,H,V_k}(t)\right)f^{(n)}(t)\,dt\right|.
\end{align*}
  By~(\ref{ssmeasure}),
  \begin{align*}
    &\left|\int_\Rl\left(\eta_{n,H,V_j}(t)
        -\eta_{n,H,V_k}(t)\right)f^{(n)}(t)\,dt\right| =
    \left|\tau\left(\Delta_{n,f}(H,V_j) -
        \Delta_{n,f}(H,V_k)\right)\right|.
  \end{align*}
  The uniform continuity of
  $V\mapsto\tau\left(\Delta_{n,f}(H,V)\right)$ (see Theorem
  \ref{MainTheoremCor}) implies
  \[\lim_{j,k\rightarrow\infty}\int_\Rl\left|\eta_{n,H,V_j}(t)
    -\eta_{n,H,V_k}(t)\right|\,dt=0.\] Thus, the sequence
  $\{\eta_{n,H,V_k}\}_k$ converges to an integrable function, which we
  denote by $\eta_{n, H, V}$. Since $L^1$-norms of the functions
  $\eta_{n,H,V_k}$, $k\in\N$, are uniformly bounded by $c_n\|V\|^n_n$,
  we obtain $$ \|\eta_{n, H, V} \|_1\leq c_n\|V\|^n_n. $$ By passing
  to the limit in the representation
  \[\tau \left( \Delta_{n, f} (H, V_k) \right) = \int_{\Rl}
  f^{(n)} (t)\, \eta_{n,H,V_k}(t)\, dt\] as $V_k\rightarrow V$, we
  obtain that $\eta_{n,H,V}$ satisfies the trace
  formula~(\ref{SSFfunctionRel}) and, thus, it is the spectral shift
  function of order~$n$ corresponding to the general perturbation
  $V\in\mL^n$.

\end{proof}

\begin{remark}
  The earlier mentioned explicit representation of \cite{ds} for
  $\eta_n$ along with the summability of $\eta_n$ implies
  \begin{equation}
    \label{etaIntegral}
    \int_\Rl\eta_n(t)\,dt=\tau(V^n)/n!
  \end{equation}
  for $V$ in the Hilbert-Schmidt class \cite{unbdds} and thus, by
  approximations, for $V$ in the $n$-th Schatten von Neumann ideal. We
  remark that the equality~(\ref{etaIntegral}) is a routine
  calculation in the case of a bounded operator $H$ (for more details
  see, e.g., \cite[Lemma 3.5]{ds}).  It follows from the trace formula
  (\ref{SSFfunctionRel}) applied to a function $f\in W_n$, which
  coincides with the polynomial $t^n$ on the spectra of all operators
  involved in~(\ref{SSFfunctionRel}). The property~(\ref{etaIntegral})
  in the case of an unbounded operator $H$ can also be obtained by
  approximations from the bounded case. The equality
  (\ref{etaIntegral}) for $n=1$ was obtained in \cite{Krein,ADS06} and
  for $n=2$ in \cite{PellerKo}.
\end{remark}

\section{Multilinear transformations $T_\phi$}
\label{sec:mod-new}

Let~$H$ be a self-adjoint linear operator affiliated with~$M$ and
let~$dE_\lambda$, $\lambda \in \Rl$ be the corresponding spectral
measure.  We set~$E_{l,m} = E \left[ \frac lm, \frac {l + 1}m
\right)$, for every~$m \in \N$ and~$l \in \Z$.  The symbols~$H$,
$dE_\lambda$ and~$E_{l, m}$ will keep their meaning for the rest of
the {\coloremphasize paper}.

\begin{definition}
\label{nmoi}
Let~$n \in \N$ and let $1 \leq \alpha_j \leq \infty$, with $1 \leq j
\leq n$, be such that $0 \leq \frac 1{\alpha_1} +\ldots + \frac
1{\alpha_n} \leq 1$. Let ~$x_j \in L^{\alpha_j}$ and denote $\tilde
x:=(x_1, \ldots, x_n )$.  Fix a bounded Borel function~$\phi: \Rl^{n +
  1} \mapsto \Cx$.  Suppose that for every~$m \in \N$, the
series\footnote{understood as\; $\lim_{N\rightarrow\infty}
  \sum_{|l_j|\leq N,\; 0\leq j\leq n}\phi \left( \frac {l_0}m, \ldots,
    \frac {l_n}m \right) \, E_{l_0, m} x_1 E_{l_1, m} x_2 \cdot \ldots
  \cdot x_n E_{l_n, m}$}
\begin{equation*}
  S_{\phi, m} (\tilde x) := \sum_{l_0, \ldots,
    l_n \in \Z} \phi \left( \frac {l_0}m, \ldots, \frac
    {l_n}m \right) \, E_{l_0, m} x_1 E_{l_1, m} x_2 \cdot \ldots \cdot
  x_n E_{l_n, m}
\end{equation*}
converges in the norm of~$L^\alpha$, where
$\frac1\alpha=\frac1{\alpha_1} + \ldots + \frac 1{\alpha_n}$ and $$
\tilde x \mapsto S_{\phi, m} (\tilde x),\ \ m \in \N, $$ is a sequence
of bounded multilinear operators~$L^{\alpha_1} \times \ldots \times
L^{\alpha_n} \mapsto L^\alpha$.  If the sequence of
operators~$\left\{S_{\phi,m}\right\}_{m \geq 1}$ converges strongly to
some multilinear operator~$T_\phi$, then, according to the
Banach-Steinhaus theorem, $\left\{S_{\phi,m}\right\}_{m \geq 1}$ is
uniformly bounded and the operator~$T_\phi$ is also bounded.
If~$T_\phi$ exists as above and bounded, we
shall be simply saying that~$T_\phi$ is bounded on~$L^{\alpha_1}
\times \ldots \times L^{\alpha_n}$ for brevity.
\end{definition}

Throughout the paper, we shall frequently use the following algebraic
properties of the mapping~$\phi \mapsto T_\phi$.  The proof is immediate
from the definition above.

\begin{lemma}
  \label{MOI-algebra}
  Let~$1 \leq\alpha_j \leq \infty$, for $1\leq j \leq n$, be such that
  $0 \leq \frac{1}{\alpha_1}+\ldots+\frac1{\alpha_n}\leq 1$.
  Let~$x_j \in L^{\alpha_j}$, $1\leq j \leq n$.

\begin{enumerate}
\item Let~$\phi: \Rl^{n + 1} \mapsto \Cx$ be bounded Borel and
  let~$T_\phi$ be bounded on~$L^{\alpha_1} \times \ldots \times
  L^{\alpha_n}$.  If $$ \bar \phi(\lambda_0, \lambda_1, \ldots,
  \lambda_n) := \overline{\phi (\lambda_n, \lambda_{n-1}, \ldots,
    \lambda_0)}, $$ then~$T_{\bar \phi}$ is bounded on~$L^{\alpha_1}
  \times \ldots \times L^{\alpha_n}$ and $$ \left\| T_{\phi} \right\|
  = \left\| T_{\bar \phi} \right\|. $$ \label{MOI-A-involution}
\item Assume, in addition, that $1 \leq\alpha_0\leq\infty$ and
  $\frac{1}{\alpha_0}+\ldots+\frac1{\alpha_n}=1$.  Let~$\phi: \Rl^{n +
    1} \mapsto \Cx$ be a bounded Borel function. Assume that $T_\phi$
  exists and is bounded on~$L^{\alpha_1} \times \ldots \times
  L^{\alpha_n}$.  Define $$ \phi^*(\lambda_{n}, \lambda_0, \ldots,
  \lambda_{n-1}) := \phi \left( \lambda_0, \ldots, \lambda_{n-1},
    \lambda_n \right). $$ If~$T_{\phi^*}$ exists and is bounded
  on~$L^{\alpha_0} \times \ldots \times L^{\alpha_{n-1}}$, then
  \footnote{Note that we do not imply boundedness of~$T_{\phi^*}$ from
    that of~$T_{\phi}$.  This is due to the fact that strong operator
    topology is not well compatible with duality, i.e., there is an
    example of a sequence of operators converging strongly such that
    the sequence of dual operators does not converge.}
  $$ \tau\left( x_0 T_\phi (x_1, \ldots, x_n) \right) = \tau \left(
    T_{\phi^*} \left( x_0, \ldots , x_{n-1} \right) x_n
  \right). $$ \label{MOI-A-duality}
  \item Let~$\phi_1: \Rl^{k + 1} \mapsto \Cx$ and~$\phi_2 : \Rl^{n - k
      + 1} \mapsto \Cx$ be bounded Borel functions such that the
    operators~$T_{\phi_1}$ and~$T_{\phi_2}$ exist and are bounded
    on~$L^{\alpha_1} \times \ldots \times L^{\alpha_k}$ and~$L^{\alpha_{k +
        1}} \times \ldots \times L^{\alpha_n}$, respectively.  If $$
    \psi(\lambda_0, \ldots, \lambda_n) := \phi_1 \left( \lambda_0,
      \ldots, \lambda_k \right) \cdot \phi_2 \left( \lambda_k, \ldots,
      \lambda_n \right), $$ then
    the operator~$T_{\psi}$ exists and is bounded on~$L^{\alpha_1}
    \times \ldots \times L^{\alpha_n}$ and $$ T_{\psi} \left( x_1,
      \ldots, x_n \right) = T_{\phi_1} \left( x_1, \ldots, x_k \right)
    \cdot T_{\phi_2} \left( x_{k + 1}, \ldots, x_n
    \right). $$ \label{MOI-A-product}
  \item Let~$\phi_1: \Rl^{k + 1} \mapsto \Cx$ and~$\phi_2 : \Rl^{n - k
      + 2} \mapsto \Cx$ be bounded Borel functions such
    that~$T_{\phi_1}$ and~$T_{\phi_2}$ exist and are bounded
    on~$L^{\alpha_1} \times \ldots \times L^{\alpha_k}$
    and~$L^{\alpha} \times L^{\alpha_{k+1}} \times \ldots \times
    L^{\alpha_n}$, respectively, where
    $\frac1\alpha=\frac{1}{\alpha_1}+\ldots+\frac1{\alpha_k}$. If $$
    \psi (\lambda_0, \ldots, \lambda_n): = \phi_1 \left( \lambda_0,
      \ldots, \lambda_k \right) \cdot \phi_2 \left(\lambda_0,
      \lambda_{k}, \ldots, \lambda_n \right), $$ then
    the operator~$T_{\psi}$ exists and is bounded
    on~$L^{\alpha_1}\times\ldots\times L^{\alpha_n}$ and
    $$T_{\psi} \left( x_1, \ldots, x_n \right) = T_{\phi_2} \left(
      T_{\phi_1}(x_1, \ldots, x_k), x_{k + 1}, \ldots, x_n
    \right). $$ \label{MOI-A-composition}
  \end{enumerate}
\end{lemma}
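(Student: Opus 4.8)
The plan is to prove all four identities by first establishing them at the level of the finite partial sums $S_{\phi,m}$ and then passing to the strong limit. The only tools needed are: the spectral projections $E_{l,m}$ are mutually orthogonal and sum to $\id$; the map $x\mapsto x^{*}$ is an isometric conjugate-linear involution on each $L^{\alpha_j}$ and $\tau$ is tracial; the noncommutative Hölder inequality, so that multiplication $L^{\beta}\times L^{\gamma}\to L^{\delta}$ with $\frac1\delta=\frac1\beta+\frac1\gamma$ is jointly bounded; and the Banach--Steinhaus principle, which (as already noted after Definition~\ref{nmoi}) shows that whenever a multiple operator integral is assumed to exist, the corresponding sequence $\{S_{\phi,m}\}_m$ is uniformly bounded. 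All rearrangements of the symmetric partial sums below are legitimate because each $S_{\phi,m}$ is, by hypothesis, a bounded polylinear map given by a convergent series, hence continuous.

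For (i), I would take adjoints termwise. Since $E_{l,m}^{*}=E_{l,m}$ and $(E_{l_0,m}x_1E_{l_1,m}\cdots x_nE_{l_n,m})^{*}=E_{l_n,m}x_n^{*}E_{l_{n-1},m}\cdots x_1^{*}E_{l_0,m}$, the relabeling $l_j\mapsto l_{n-j}$ in the defining sum yields
\[
S_{\phi,m}(x_1,\dots,x_n)^{*}=S_{\bar\phi,m}(x_n^{*},\dots,x_1^{*}).
\]
Because $*$ is an isometry of every $L^{\alpha_j}$ and of $L^{\alpha}$, the right-hand side converges strongly; hence $T_{\bar\phi}$ exists and is bounded with $T_{\bar\phi}(y_1,\dots,y_n)=T_\phi(y_n^{*},\dots,y_1^{*})^{*}$, and taking operator norms gives $\|T_\phi\|=\|T_{\bar\phi}\|$.

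For (ii), cyclicity of $\tau$ on each summand gives $\tau(x_0E_{l_0,m}x_1\cdots x_nE_{l_n,m})=\tau(E_{l_n,m}x_0E_{l_0,m}x_1\cdots x_{n-1}E_{l_{n-1},m}x_n)$, and after the relabeling $(l_0,\dots,l_n)\mapsto(l_1,\dots,l_n,l_0)$ the coefficient $\phi(\frac{l_0}m,\dots,\frac{l_n}m)$ becomes exactly $\phi^{*}$ evaluated at the new indices, so $\tau\big(x_0S_{\phi,m}(x_1,\dots,x_n)\big)=\tau\big(S_{\phi^{*},m}(x_0,\dots,x_{n-1})x_n\big)$ for every $m$. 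On the left, $S_{\phi,m}(x_1,\dots,x_n)\to T_\phi(x_1,\dots,x_n)$ in $L^{\alpha}$ while $y\mapsto\tau(x_0y)$ is continuous on $L^{\alpha}$ because $\frac1{\alpha_0}+\frac1\alpha=1$; on the right, $S_{\phi^{*},m}(x_0,\dots,x_{n-1})\to T_{\phi^{*}}(x_0,\dots,x_{n-1})$ in $L^{\beta}$ with $\frac1\beta=1-\frac1{\alpha_n}$ (this is precisely where the separate hypothesis that $T_{\phi^{*}}$ exists is used, in line with the footnote), and $z\mapsto\tau(zx_n)$ is continuous there. Passing to the limit yields the stated trace identity.

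For (iii) and (iv), the key observation is that inserting $\sum_{l}E_{l,m}=\id$ at one interior node and using $E_{l,m}E_{l',m}=\delta_{l,l'}E_{l,m}$ factorizes the partial sums. In case (iii) a direct multiplication shows $S_{\psi,m}(x_1,\dots,x_n)=S_{\phi_1,m}(x_1,\dots,x_k)\,S_{\phi_2,m}(x_{k+1},\dots,x_n)$, and letting $m\to\infty$ with the Hölder bound on multiplication $L^{\beta_1}\times L^{\beta_2}\to L^{\alpha}$ (with $\frac1{\beta_1}=\sum_{j\leq k}\frac1{\alpha_j}$, $\frac1{\beta_2}=\sum_{j>k}\frac1{\alpha_j}$) gives the product formula. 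In case (iv), substituting $y=S_{\phi_1,m}(x_1,\dots,x_k)$ into the first slot of $S_{\phi_2,m}$ and collapsing the two shared spectral projections (those carrying $\lambda_0$ and $\lambda_k$) yields $S_{\psi,m}(x_1,\dots,x_n)=S_{\phi_2,m}\big(S_{\phi_1,m}(x_1,\dots,x_k),x_{k+1},\dots,x_n\big)$; this identity is the heart of the lemma. The one genuinely delicate step, which I expect to be the main obstacle, is the passage to the limit, since the argument and the operator vary simultaneously and strong convergence does not pass naively through composition. I would handle it with the uniform bound: writing $y_m:=S_{\phi_1,m}(x_1,\dots,x_k)\to y:=T_{\phi_1}(x_1,\dots,x_k)$ in $L^{\alpha}$ and $C:=\sup_m\|S_{\phi_2,m}\|<\infty$ (Banach--Steinhaus), one has, with norms in the $L$-spaces dictated by Hölder,
\begin{multline*}
\big\|S_{\phi_2,m}(y_m,x_{k+1},\dots,x_n)-T_{\phi_2}(y,x_{k+1},\dots,x_n)\big\|\\
\leq C\,\|y_m-y\|_{\alpha}\prod_{j=k+1}^{n}\|x_j\|_{\alpha_j}+\big\|\big(S_{\phi_2,m}-T_{\phi_2}\big)(y,x_{k+1},\dots,x_n)\big\|,
\end{multline*}
and both terms tend to $0$. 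Hence $T_\psi$ exists and equals $T_{\phi_2}\big(T_{\phi_1}(x_1,\dots,x_k),x_{k+1},\dots,x_n\big)$, and its boundedness is then automatic.
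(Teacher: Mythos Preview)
Your proposal is correct and is exactly the argument the paper has in mind: the paper does not give a proof at all, stating only that ``the proof is immediate from the definition above,'' and your work at the level of the partial sums $S_{\phi,m}$ followed by passage to the strong limit is precisely the intended route. In particular, your handling of the double limit in part~(iv) via the uniform bound $\sup_m\|S_{\phi_2,m}\|<\infty$ and the splitting $S_{\phi_2,m}(y_m,\dots)-T_{\phi_2}(y,\dots)=S_{\phi_2,m}(y_m-y,\dots)+(S_{\phi_2,m}-T_{\phi_2})(y,\dots)$ is the right way to make the ``immediate'' step rigorous.

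One small remark on part~(i): your adjoint computation actually shows that $T_{\bar\phi}$ is bounded on $L^{\alpha_n}\times\cdots\times L^{\alpha_1}$ (the reversed tuple of exponents), not on $L^{\alpha_1}\times\cdots\times L^{\alpha_n}$ as literally written in the statement. This is harmless, since the only use of~(i) in the paper is the case $n=1$ with a single exponent $\alpha$, where the distinction disappears; but it is worth noting that the paper's formulation is slightly informal on this point and your argument gives the correct version.
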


The next lemma shows that in dealing with the operators~$T_\phi$ it is always
sufficient to consider compactly supported functions~$\phi$.

\begin{lemma}
  \label{CompactSupportTphi}
  Let~$\phi: \Rl^{n+1} \mapsto \Cx$ be a bounded Borel function and
  let $1 \leq \alpha_j < \infty$, with $1\leq j\leq n$, be such that
  $0\leq\frac1{\alpha_1}+\ldots+\frac1{\alpha_n} \leq 1$. If
  \begin{equation*}
    \phi_k  \left( \tilde \lambda
    \right) :=
    \begin{cases}
      \phi \left( \tilde \lambda \right), & \text{if~$\left| \lambda_j
        \right| \leq k$, for every~$0 \leq j \leq n$;} \\ 0, &
      \text{otherwise,}
    \end{cases}
  \end{equation*}
  where~$\tilde \lambda = \left( \lambda_0, \ldots, \lambda_n \right)
  \in \Rl^{n + 1}$, and if the sequence of the
  operators~$\{T_{\phi_k}\}_{k=1}^\infty$ exists and is uniformly
  bounded on~$L^{\alpha_1} \times \ldots \times L^{\alpha_n}$, then
  the operator~$T_\phi$ exists and is bounded (with the same norm
  estimate) on~$L^{\alpha_1} \times \ldots \times L^{\alpha_n}$.
\end{lemma}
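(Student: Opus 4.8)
The plan is to reduce everything to two ingredients: a \emph{compression identity} expressing the truncated sums of $S_{\phi,m}$ through the finite sums $S_{\phi_k,m}$, and a \emph{tail estimate} that is uniform in the discretization parameter $m$ precisely because each $\alpha_j<\infty$. Put $Q_k^m:=\sum_{|l|\le km}E_{l,m}$, a spectral projection of $H$ with $Q_k^m\uparrow\id$ strongly as $k\to\infty$ and $\id-Q_k^m\le E(\Rl\setminus[-k,k])$. Since $E_{l,m}Q_k^m$ equals $E_{l,m}$ when $|l|\le km$ and $0$ otherwise, a direct inspection of the defining sums yields, for all $k'\ge k$,
\[
S_{\phi_k,m}(x_1,\dots,x_n)=S_{\phi_{k'},m}\bigl(Q_k^m x_1 Q_k^m,\dots,Q_k^m x_n Q_k^m\bigr),
\]
all the series here being \emph{finite} sums because $\phi_k$ is compactly supported; in particular $S_{\phi_k,m}(\tilde x)$ depends only on the compression $Q_k^m\tilde x Q_k^m$. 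Moreover, since $\alpha_j<\infty$ and $E(\Rl\setminus[-k,k])\downarrow 0$ strongly, $\|x_j-Q_k^m x_j Q_k^m\|_{\alpha_j}\le\|(\id-Q_k^m)x_j\|_{\alpha_j}+\|x_j(\id-Q_k^m)\|_{\alpha_j}\to 0$ as $k\to\infty$, with a bound independent of $m$; by multilinearity this becomes $\|S(\tilde x)-S(Q_k^m\tilde x Q_k^m)\|\le\|S\|\,\varepsilon_k(\tilde x)$ for any bounded polylinear $S$, where $\varepsilon_k(\tilde x)\to 0$ uniformly in $m$ and $k'$.

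The technical core, and the step I expect to be the main obstacle, is to upgrade the hypothesis $\sup_k\|T_{\phi_k}\|=:C_0<\infty$ to the \emph{uniform} estimate $\sup_{k,m}\|S_{\phi_k,m}\|\le C_0$. The natural route is to observe that $S_{\phi_k,m}$ is the multiple operator integral of $\phi_k$ taken with respect to the coarsened atomic spectral measure $l/m\mapsto E_{l,m}$ of $H$, and to invoke a Schur-multiplier-type restriction principle: replacing the spectral measure by a coarser one, equivalently pulling the symbol back along the monotone rounding map, does not increase the operator-integral norm, so that $\|S_{\phi_k,m}\|\le\|T_{\phi_k}\|\le C_0$. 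This is the one place where the hypothesis is genuinely used, and it is what I would isolate as a separate lemma; the rest is bookkeeping.

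Granting $\sup_{k,m}\|S_{\phi_k,m}\|\le C_0$, the proof assembles as follows. Fix $m$. The cube-truncation $\Sigma_N^m(\tilde x):=\sum_{|l_i|\le N}\phi(\tilde l/m)E_{l_0,m}x_1\cdots x_n E_{l_n,m}$ equals $S_{\phi_K,m}(P_N^m\tilde x P_N^m)$ for any $K\ge\lceil N/m\rceil$, where $P_N^m:=\sum_{|l|\le N}E_{l,m}$; the tail estimate (applied to $S=S_{\phi_K,m}$, $\|S\|\le C_0$) shows $\{\Sigma_N^m(\tilde x)\}_N$ is Cauchy, so the series defining $S_{\phi,m}(\tilde x)$ converges, $\|S_{\phi,m}\|\le C_0$, and the compression identity passes to the limit as $S_{\phi_k,m}(\tilde x)=S_{\phi,m}(Q_k^m\tilde x Q_k^m)$. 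Next, for fixed $\tilde x$,
\[
\|S_{\phi,m}(\tilde x)-S_{\phi,m'}(\tilde x)\|\le 2C_0\,\varepsilon_k(\tilde x)+\|S_{\phi_k,m}(\tilde x)-S_{\phi_k,m'}(\tilde x)\|;
\]
choosing $k$ so that $2C_0\varepsilon_k(\tilde x)$ is small and then using that $S_{\phi_k,m}\to T_{\phi_k}$ strongly (the hypothesis) shows $\{S_{\phi,m}(\tilde x)\}_m$ is Cauchy, and its limit $T_\phi(\tilde x)$ is bounded polylinear with $S_{\phi,m}\to T_\phi$ strongly; thus $T_\phi$ exists in the sense of Definition~\ref{nmoi}. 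Finally, $\|T_\phi(\tilde x)\|=\lim_m\|S_{\phi,m}(\tilde x)\|\le\|T_{\phi_k}(\tilde x)\|+C_0\varepsilon_k(\tilde x)\le\|T_{\phi_k}\|\prod_j\|x_j\|_{\alpha_j}+C_0\varepsilon_k(\tilde x)$, and letting $k\to\infty$ gives $\|T_\phi\|\le\sup_k\|T_{\phi_k}\|$, which is the claimed ``same norm estimate''.
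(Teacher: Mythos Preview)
Your compression identity is exactly the observation the paper uses, but the paper's argument is much shorter and sidesteps your ``main obstacle'' entirely. With $F_k:=E([-k,k])$ and $L_k^{\alpha_j}:=F_kL^{\alpha_j}F_k$, if each $x_j\in L_k^{\alpha_j}$ then $S_{\phi,m}(\tilde x)=S_{\phi_k,m}(\tilde x)$ for every $m$ (your identity, applied to already-compressed inputs). Since $T_{\phi_k}$ exists by hypothesis, $S_{\phi,m}(\tilde x)=S_{\phi_k,m}(\tilde x)\to T_{\phi_k}(\tilde x)$, with $\|T_{\phi_k}(\tilde x)\|\le C_0\prod_j\|x_j\|_{\alpha_j}$. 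So $T_\phi$ is defined and bounded by $C_0$ on the dense subspace $\bigcup_k L_k^{\alpha_1}\times\cdots\times L_k^{\alpha_n}$ (density uses $\alpha_j<\infty$), and one extends. No uniform-in-$(k,m)$ bound on $\|S_{\phi_k,m}\|$ is ever needed.

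Your restriction principle $\|S_{\phi_k,m}\|\le\|T_{\phi_k}\|$ is the gap. Strong convergence $S_{\phi_k,m}\to T_{\phi_k}$ gives only $\sup_m\|S_{\phi_k,m}\|<\infty$ for each \emph{fixed} $k$ via Banach--Steinhaus, not domination by the limit norm (already $S_m=(1+1/m)\,\id\to\id$ has $\|S_m\|>1$). The structural justification you sketch---that $S_{\phi_k,m}$ is the multiple operator integral of $\phi_k$ for the coarsened operator $H_m$, and that coarsening cannot increase the norm---is not a consequence of the hypothesis, which concerns only the specific operator $H$; a general monotonicity principle of this type for multiple operator integrals on $L^\alpha$ with $\alpha\ne 2$ is not available. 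Since your Cauchy-in-$N$ and Cauchy-in-$m$ estimates both rely essentially on this uniform bound (the relevant $K$ grows with $N$), the argument as written does not close. The cure is simply not to attack general $\tilde x$ first: establish convergence and the norm bound on compressed $\tilde x$, where everything reduces to the hypothesis for a single $\phi_k$, and only then pass to all of $L^{\alpha_1}\times\cdots\times L^{\alpha_n}$ by density.
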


\begin{proof}
  Let~$F_k = E([-k, k])$ and let~$L_k^\alpha = F_k L^\alpha F_k
  \subseteq L^\alpha$.  Observe that if the collection~$\tilde x =
  \left( x_1, \ldots, x_n \right)$ is such that~$x_j \in
  L^{\alpha_j}_k$, then $$ y_m : = S_{\phi, m} (\tilde x) = S_{\phi_k,
    m} (\tilde x). $$ Since the operator $T_{\phi_k}$ exists and is
  bounded, the sequence $\{y_m\}_{m\geq 1}$ converges in $L^\alpha$
  for every $k\geq 1$. Hence, the operator $T_{\phi}$ is well defined
  and bounded on the subspace $\bigcup_{k \geq
    1}L^{\alpha_1}_k\times\ldots\times L^{\alpha_n}_k$. Since the set
  $\bigcup_{k \geq 1} L^{\alpha_1}_k \times \ldots \times
  L^{\alpha_n}_k$ is norm dense in~$L^{\alpha_1} \times \ldots \times
  L^{\alpha_n}$, the operator $T_{\phi}$ is well defined on
  $L^{\alpha_1} \times \ldots \times L^{\alpha_n}$.
\end{proof}

\begin{remark}
\label{phiHS}
In the special case $n=1$, the multilinear operator $T_\phi$ becomes a
linear operator on $L^\alpha$. For a large class of functions $\phi$, the operator $T_\phi$ coincides with the
double operator integral $\hat{T}_\phi$ \cite{PSW2002} (see below and also see
references cited in \cite{BirmanS, PSW2002}). In particular, it is
known that on the Hilbert space $L^2$, the norm of the operator
$\hat T_\phi$ is bounded by $\|\phi\|_\infty$, for every bounded
Borel~$\phi$.  Note that this nice Hilbert space behaviour is
exclusive to the case~$n = 1$.  For~$n \geq 2$ it seems there is no
combination of exponents~$\alpha_1, \ldots, \alpha_n$ such
that~$T_\phi$ is bounded for every bounded Borel~$\phi$.
\end{remark}

We next introduce the subclass of functions~$\phi$ for which it is
relatively simple to show that the operator~$T_\phi$ is bounded, though the
functions themselves have rather complex structure.

Let~$\cC_n$ be the class of functions~$\phi: \Rl^{n + 1} \mapsto \Cx$
admitting the representation
\begin{equation}
  \label{Crep}
  \phi (\lambda_0, \ldots, \lambda_n) =
  \int_\Omega \prod_{j = 0}^n a_j (\lambda_j, s)\, d\mu(s),
\end{equation}
for some finite measure space~$\left( \Omega, \mu \right)$ and bounded
continuous functions $$ a_{j} \left( \cdot, s \right) : \Rl \mapsto
\Cx $$ for which there is a growing sequence of measurable
subsets~$\left\{\Omega_k \right\}_{k \geq 1}$, with $\Omega_k \subseteq
\Omega$ and~$\cup_{k \geq 1} \Omega_k = \Omega$ such that the
families $$ \left\{a_{j} (\cdot, s) \right\}_{s \in \Omega_k},\ \ 0
\leq j \leq n, $$ are uniformly bounded and uniformly equicontinuous.  The
class~$\cC_n$ has the norm $$ \left\| \phi \right\|_{\cC_n} = \inf
\int_\Omega \prod_{j = 0}^n \left\| a_j(\cdot, s) \right\|_\infty \,
d\left| \mu \right|(s), $$ where the infimum is taken over all
possible representations~(\ref{Crep}).

The following lemma demonstrates that for $\phi\in\cC_n$, Definition
\ref{nmoi} coincides with the one in \cite{PellerMult} (see also \cite{AzCaDoSu2009}). We point out that the definition in \cite{PellerMult} also applies to a wider set of functions.

\begin{lemma}
  \label{ClassNewDef}
 Let $1\leq \alpha_j \leq \infty$, with $1\leq j\leq n$, be such that $0 \leq \frac 1{\alpha_1} + \ldots + \frac 1{\alpha_n} \leq 1$. For every~$\phi \in \cC_n$, the operator $T_\phi$ exists and is bounded
  on~$L^{\alpha_1} \times \ldots \times L^{\alpha_n}$, with
  $$ \left\| T_\phi \right\| \leq \left\|\phi \right\|_{\cC_n}. $$
  Moreover, given the decomposition~(\ref{Crep}) of the function
  $\phi$, the operator~$T_\phi$ can be represented as the Bochner
  integral
  \begin{equation}
    \label{ClassNewDefRep}
    T_\phi (x_1, x_2, \ldots, x_n)
    = \int_{\Omega} a_0(H, s)\, x_1 \, a_1 (H, s)\, x_2 \cdot \ldots\cdot x_n a_n (H, s)\, d \mu(s).
  \end{equation}
\end{lemma}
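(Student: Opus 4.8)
The plan is to exhibit the Bochner integral on the right-hand side of \eqref{ClassNewDefRep} as a candidate for $T_\phi$, check that it is a well-defined bounded polylinear operator with the asserted norm bound, and then verify that the Riemann-type sums $S_{\phi,m}(\tilde x)$ from Definition~\ref{nmoi} converge strongly to it. First I would fix a representation \eqref{Crep} of $\phi$ and consider the integrand $s\mapsto a_0(H,s)\,x_1\,a_1(H,s)\,x_2\cdots x_n\,a_n(H,s)$. Since each $a_j(\cdot,s)$ is bounded Borel, $a_j(H,s)\in M$ with $\|a_j(H,s)\|_\infty\leq\|a_j(\cdot,s)\|_\infty$, so by H\"older's inequality for the noncommutative $L^\alpha$-spaces the integrand lies in $L^\alpha$ with norm at most $\prod_{j=0}^n\|a_j(\cdot,s)\|_\infty\cdot\prod_{j=1}^n\|x_j\|_{\alpha_j}$. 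Strong ($\sigma$-weak) measurability of $s\mapsto a_j(H,s)$ follows from continuity of $a_j(\cdot,s)$ in $\lambda$ together with the measurability hypothesis in $s$ built into the definition of $\cC_n$ (approximate $a_j(\cdot,s)$ by simple functions of $\lambda$ uniformly on the relevant spectral subsets), and products of such maps are again measurable; hence the integrand is Bochner integrable over each $\Omega_k$, and integrability over $\Omega$ follows because $\int_\Omega\prod_j\|a_j(\cdot,s)\|_\infty\,d|\mu|(s)<\infty$. Taking the infimum over representations yields the bound $\|T_\phi\|\leq\|\phi\|_{\cC_n}$, and polylinearity in $(x_1,\ldots,x_n)$ is immediate from linearity of the Bochner integral and of operator multiplication.

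Next I would identify this Bochner integral with the strong limit of $S_{\phi,m}(\tilde x)$. For a single product function $\prod_{j=0}^n a_j(\lambda_j)$ (no $s$-dependence), the sum $S_{\phi,m}(\tilde x)$ factors as $\bigl(\sum_{l_0}a_0(l_0/m)E_{l_0,m}\bigr)x_1\bigl(\sum_{l_1}a_1(l_1/m)E_{l_1,m}\bigr)x_2\cdots$, i.e.\ as $a_0^{(m)}(H)\,x_1\,a_1^{(m)}(H)\cdots x_n\,a_n^{(m)}(H)$, where $a_j^{(m)}$ is the step-function approximation of $a_j$ on the dyadic-type grid. By Lemma~\ref{CompactSupportTphi} it suffices to treat $x_j\in L^{\alpha_j}_k=F_kL^{\alpha_j}F_k$, so only the values of $a_j$ on $[-k,k]$ matter; there uniform continuity of $a_j(\cdot,s)$ gives $\|a_j^{(m)}(H)F_k-a_j(H)F_k\|_\infty\to0$ as $m\to\infty$, uniformly in $s\in\Omega_{k'}$, and a telescoping estimate through the product (using H\"older and the uniform bounds on the families $\{a_j(\cdot,s)\}_{s\in\Omega_{k'}}$) shows the integrand of $S_{\phi,m}$ converges in $L^\alpha$ to that of $T_\phi$, uniformly on each $\Omega_{k'}$. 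Then dominated convergence for the Bochner integral (the common dominating function being $\prod_j\|a_j(\cdot,s)\|_\infty\cdot\prod_j\|x_j\|_{\alpha_j}$, integrable on $\Omega$) lets me pass the limit inside $\int_\Omega d\mu(s)$, giving $S_{\phi,m}(\tilde x)\to\int_\Omega a_0(H,s)x_1a_1(H,s)\cdots x_na_n(H,s)\,d\mu(s)$ in $L^\alpha$. This also confirms the sums defining $S_{\phi,m}(\tilde x)$ converge (in the footnoted sense) and are uniformly bounded, so $T_\phi$ as produced by Definition~\ref{nmoi} coincides with the Bochner integral, proving \eqref{ClassNewDefRep}.

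The main obstacle I anticipate is the interchange of the spectral/grid limit $m\to\infty$ with the integral over $\Omega$: one must have the $L^\alpha$-convergence of the integrands be \emph{dominated uniformly in $s$}, which is exactly why the definition of $\cC_n$ demands an exhausting sequence $\{\Omega_k\}$ on which the families $\{a_j(\cdot,s)\}_{s}$ are uniformly bounded \emph{and uniformly continuous} — without uniform continuity in $s$ the rate of the step-function approximation could degenerate and dominated convergence would fail. A secondary technical point is the strong measurability of $s\mapsto a_j(H,s)$ and of the operator-valued product, which should be handled by approximating on each $\Omega_k$ by $M$-valued (indeed $\sigma$-weakly, hence, via the resolvent, strongly) measurable simple functions; combined with separability of $\sH$ this puts us in the Pettis/Bochner framework. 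Everything else — H\"older bounds, linearity, the infimum over representations — is routine.
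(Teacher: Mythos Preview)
Your approach is essentially the paper's: define the Bochner integral $\hat T_\phi$, observe that $S_{\phi,m}$ is obtained by replacing each $a_j(H,s)$ with its step-function approximation $a_j(H_m,s)$ (where $H_m=\sum_l \tfrac lm E_{l,m}$), and use the uniform continuity of the families $\{a_j(\cdot,s)\}_{s\in\Omega_k}$ together with the $\Omega_k$-exhaustion to pass to the limit. The paper carries out the last step via a direct $3\epsilon$ tail estimate (splitting the integral over $\Omega_{k_\epsilon}$ and its complement), whereas you phrase it as dominated convergence; these are equivalent here.

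One small but genuine issue: your reduction ``by Lemma~\ref{CompactSupportTphi} it suffices to treat $x_j\in L^{\alpha_j}_k$'' is a misapplication of that lemma (which reduces $\phi$, not $x_j$, to compact support) and in any case relies on norm density of $\bigcup_k F_kL^{\alpha_j}F_k$ in $L^{\alpha_j}$, which fails for $\alpha_j=\infty$, a case allowed in the statement. The reduction is also unnecessary: since the class $\cC_n$ requires the families $\{a_j(\cdot,s)\}_{s\in\Omega_k}$ to be uniformly continuous on all of $\Rl$ (not merely continuous), and since $\|H-H_m\|\le 1/m$, you get $\|a_j(H,s)-a_j(H_m,s)\|_\infty\to 0$ uniformly in $s\in\Omega_k$ with no restriction on the spectral support of the $x_j$. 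Dropping the density detour and arguing directly, exactly as the paper does, closes the gap.
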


\begin{proof}[Proof of Lemma~\ref{ClassNewDef}]
Due to the choice of functions $a_j$, the operator
 \begin{equation}
    \label{ClassNewDefRep'}
    \hat{T}_\phi(x_1, x_2, \ldots, x_n):= \int_{\Omega} a_0(H, s)\,
    x_1 \, a_1 (H, s)\, x_2 \cdot \ldots\cdot x_n a_n (H, s)\, d
    \mu(s)
  \end{equation}
  is bounded on $L^{\alpha_1} \times \ldots \times L^{\alpha_n}$,
  with $$ \left\| \hat{T}_\phi \right\| \leq \left\|
    \phi\right\|_{\cC_n}.$$

  Let us show that the multiple operator
  integral~$\hat{T}_\phi$ coincides with the multilinear transformation given by Definition \ref{nmoi}.  Let $H_m:=\sum_{l \in \Z}
  \frac lm E_{l, m}$.  Clearly,
  \begin{equation}
    \label{ClassNewDefTempI}
    \left\| H - H_m
    \right\| \leq \frac 1m.
  \end{equation}
  By analogy with~(\ref{ClassNewDefRep'}), we set
  \begin{equation*}
    \hat{T}_{\phi, m} (x_1, x_2, \ldots, x_n):
    = \int_{\Omega} a_0(H_m, s)\, x_1 \, a_1 (H_m, s)\, x_2 \cdot \ldots
    \cdot x_n a_n (H_m, s)\, d \mu(s).
  \end{equation*}
  It can be seen that the operator~$\hat{T}_{\phi, m}$ coincides
  with~$S_{m,\phi}$ in Definition \ref{nmoi}, i.e.,
  \begin{equation}
  \label{c1}
  S_{\phi,m}\left( \tilde x \right) =
  \hat{T}_{\phi, m} \left( \tilde x \right), \quad \tilde x = (x_1,
  \ldots, x_n).
  \end{equation}
  Thus, to finish the proof, it suffices to show that $$ \lim_{m
    \rightarrow \infty} \left\| \hat T_{\phi, m} (\tilde x) -
    \hat{T}_{\phi} \left( \tilde x \right) \right\|_\alpha = 0,\quad
  \tilde x = \left( x_1, \ldots, x_n\right).  $$ To this end,
  fix~$\tilde x = (x_1, \ldots, x_n)$, with $\prod_{j=1}^n\|x_j\|_{\alpha_j}\leq 1$, fix~$\epsilon > 0$, and fix a
  number~$k_\epsilon \in \N$ such that
  \begin{equation}
    \label{ClassNewDefTemp}
    \int_{\Omega \setminus \Omega_{k_\epsilon}} \prod_{j = 0}^n \left\|
      a_j(\cdot, s) \right\|_\infty\, d \left| \mu \right|(s) < \epsilon.
  \end{equation}
  We next set
  \begin{multline*}
    \hat{y}_\epsilon := \int_{\Omega_{k_\epsilon}} a_0(H, s) x_1 \cdot
    \ldots \cdot x_n a_n(H, s)\, d\mu (s) \quad \text{and}\\
    \hat{y}_{\epsilon, m}: = \int_{\Omega_{k_\epsilon}} a_0(H_m, s)
    x_1 \cdot \ldots \cdot x_n a_n(H_m, s)\, d\mu (s).
  \end{multline*}
  The estimate~(\ref{ClassNewDefTemp}) implies
  \begin{equation}
    \label{c2}
    \left\| \hat{T}_{\phi}
      (\tilde x) - \hat{y}_\epsilon \right\|_\alpha < \epsilon \quad
    \text{and}\quad \left\| \hat{T}_{\phi, m}(\tilde x) -
      \hat{y}_{\epsilon, m} \right\|_\alpha <\epsilon.
  \end{equation}
  Since the family of functions $\{a_j(\cdot,
  s)\}_{s\in\Omega_{k_\epsilon}}$, $0\leq j\leq n$, is uniformly bounded and uniformly
  equicontinuous, we derive from~(\ref{ClassNewDefTempI}) existence of
  $m_\epsilon \in \N$ such that
  \begin{equation}
    \label{c3}
    \left\|\hat{y}_\epsilon-\hat{y}_{\epsilon, m}\right\|_\alpha < \epsilon,\quad \text{for any } m > m_\epsilon.
  \end{equation}
  Combining~(\ref{c1}), (\ref{c2}), and~(\ref{c3}) implies that for
  every~$\epsilon > 0$, there exists $m_\epsilon \in \N$ such that for
  every $m > m_\epsilon$,
  \begin{align*}
    \left\| \hat{T}_\phi(\tilde x) - \hat T_{\phi, m} (\tilde x)
    \right\|_\alpha \leq \left\| \hat{T}_\phi (\tilde x) -
      \hat{y}_\epsilon \right\|_\alpha + \left\| \hat{y}_\epsilon -
      \hat{y}_{\epsilon, m} \right\|_\alpha + \left\|
      \hat{y}_{\epsilon, m} - \hat T_{\phi, m} (\tilde x)
    \right\|_\alpha < 3\epsilon,
  \end{align*}
  completing the proof.
\end{proof}

Recall that for $f\in W_n$, the operator derivative
$\frac{d^n}{dt^n}[f(H+tV)]$ is given by the integral $T_{f^{[n]}}$
\cite{PellerMult} (see also \cite{AzCaDoSu2009}) where $f^{[n]}$ is the divided
difference. The divided difference of the zeroth order~$f^{[0]}$ is
the function~$f$ itself. Let~$\lambda_0, \lambda_1, \ldots \in \Rl$
and let~$f \in C^n$. The divided difference $f^{[n]}$ of order~$n$ is
defined recursively by
\begin{align*}
  f^{[n]} \left( \lambda_0, \lambda_1, \tilde \lambda \right) =
  \begin{cases}\frac
    { f^{[n-1]} (\lambda_0, \tilde \lambda) - f^{[n-1]}(\lambda_1,
      \tilde \lambda)}{\lambda_0 - \lambda_1}, & \text{if~$\lambda_0
      \neq \lambda_1$}, \\ \frac {d}{d\lambda_1} f^{[n-1]} (\lambda_1,
    \tilde \lambda), & \text{if~$\lambda_0=\lambda_1$},
  \end{cases}
\end{align*}
where~$\tilde\lambda = \left(\lambda_2, \ldots, \lambda_n \right) \in
\Rl^{n-1}$.  It is clear that if~$f \in C^n$, then the divided
difference~$f^{[n]}$ is a continuous function on~$\Rl^{n + 1}$. If~$f
\in \W_n$, then~$f^{[n]}\in\cC_n$ \cite[Lemma 2.3]{AzCaDoSu2009}.

\section{Demonstration of the method.}
\label{sec:doi-case}

{\coloremphasize The proof of Theorem~\ref{MainTheoremCor} is rather
  technical, so we choose the following approach. In the present
  section, we firstly demonstrate the principal techniques of the
  proof in some simple cases.  In our demonstration, we shall focus on
  the estimate~(\ref{TextPrincipalEstimateD}) alone.  The second
  estimate~(\ref{TextPrincipalEstimate}) follows
  from~(\ref{TextPrincipalEstimateD}) via (a higher version of) the
  Fundamental Theorem of the Calculus.  We shall show this in the main
  proof of Theorem~\ref{MainTheoremCor} in Section~\ref{sec:doi-fn}
  (see~(\ref{SSFRemainderTemp}) below).

  We first explain the connection between the
  estimate~(\ref{TextPrincipalEstimateD}) and estimates for multiple
  operator integrals.

\subsection*{The case~$n = 1$.}

In this case, the estimate~(\ref{TextPrincipalEstimateD}) is
straightforward.  Indeed, from the elementary identity $$ \tau\left(
  \frac d{dt} \left[ f(H_t) \right] \right) = \tau \left( f'(H_t)\, V
\right), $$ the proof of~(\ref{TextPrincipalEstimateD}) is
immediate $$ \left| \tau \left( \frac d{dt} \left[ f(H_t) \right]
  \right) \right| \leq \, \left\| f' \right\|_\infty\, \left\| V
\right\|_1. $$

\subsection*{The case~$n = 2$.}

For the second derivative~$\frac {d^2}{dt^2} \left[ f(H_t) \right]$, we
employ the following representation
\begin{equation}
  \label{SSFFrechetII}
\frac {d^2}{dt^2} \left[ f(H_t) \right] = 2 T_{t, f^{[2]}} \left( V, V
\right),
\end{equation}
where~$T_{t, f^{{2}}}$ is the triple operator integral
associated with the operator~$H_t$ and the divided
difference~$f^{[2]}$.  The representation above has been established
by several authors, we refer the reader
to~\cite[Theorem~5.7]{AzCaDoSu2009} (see also~(\ref{SSFFrechetTemp})
below).

To make the demonstration simpler, we assume that~$H_t$ has pure
integral point spectrum, i.e., if~$dE_\lambda$, $\lambda \in \Rl$, is
the spectral measure of~$H_t$, then $$ E(B) = \sum_{l \in B\cap\Z}
E_l,\ \ \text{$B \subseteq \Rl$ is Borel,} $$ where~$E = \left\{E_l
\right\}_{l \in \Z}$ is a family of pairwise orthogonal projections
and~$\sum_{l \in \Z} E_l = 1$.  In fact, we shall later show (see
Lemma~\ref{ToDiscreteMeasure}) that the case of an arbitrary
operator~$H_t$ can always be reduced, via approximation, to the case
of spectral measure with integral concentration points.

We keep the point~$t \in [0, 1]$ fixed, so we do not show the
dependence of the spectral measure~$dE_\lambda$ on~$t$ in our
notations.

Given that~$H_t$ has pure integral point spectrum, the representation
for the second derivative above takes the following simpler form $$
\frac {d^2}{dt^2} \left[ f(H_t) \right] = \sum_{l, m, k \in \Z}
f^{[2]} \left( l, m, k \right)\, E_{l} V E_m V E_k. $$ Furthermore,
applying the trace~$\tau$ to the identity above and coupling the
projections~$E_l$ and~$E_k$ on the right hand side, we arrive at $$
\tau \left( \frac {d^2}{dt^2} \left[ f(H_t) \right] \right) = \sum_{m,
  k \in \Z} f^{[2]}(k, m, k)\, \tau \left( V E_m V E_k \right) = \tau
\left( V\, T_{t, \phi_2} \left( V \right) \right), $$ where we set $$
T_{t, \phi_2}(V) = \sum_{m, k \in \Z} \phi_2(m, k)\, E_m V E_k \ \
\text{and}\ \ \phi_2(x, y) = f^{[2]}(x, y, x). $$ In other words,
application of the trace has reduced the triple operator integral
formula to the double operator integral formula.  Thus, the
estimate~(\ref{TextPrincipalEstimateD}) would follow from $$ \left|
  \tau (V\, T_{t, \phi_2}(V)) \right| \leq \left\| f''
\right\|_\infty\, \left\| V \right\|_2^2, $$ or, via H\"{o}lder's
inequality, from $$ \left\| T_{t, \phi_2} (V) \right\|_2 \leq \left\|
  f'' \right\|_\infty\, \left\| V \right\|_2. $$ However, the latter
follows from the observation that $$ \left\| \phi_2 \right\|_\infty
\leq \left\| f^{[2]} \right\|_\infty \leq \left\| f''
\right\|_\infty $$ and that the well-known result that

\begin{theorem}
  \label{LiiEstimateTjm}
  For every Borel bounded function~$\phi(x, y)$, the double operator
  integral~$T_\phi$ is bounded on~$L^2$ and $$ \left\| T_\phi \right\|
  \leq \left\| \phi \right\|_\infty. $$
\end{theorem}

Thus, the desired estimate~(\ref{TextPrincipalEstimateD}) is
established for~$n = 2$.  The scheme above was first presented by
L.S.~Koplienko in his paper~\cite{Kop84}.

\subsection*{The case~$n > 2$.}

For the higher derivative~$\frac {d^n}{dt^n} \left[ f(H_t) \right]$,
we use higher version of the representation~(\ref{SSFFrechetII}), that
is (see~\cite[Theorem~5.7]{AzCaDoSu2009}), $$ \frac {d^n}{dt^n} \left[
  f(H_t) \right] = n! \, T_{t, f^{[n]}} \Bigl( \underbrace{V, \ldots,
  V}_{\text{$n$-times}} \Bigr). $$ By repeating the steps we used in
the case~$n = 2$, i.e., by applying the trace to this identity and by
coupling the outer projections on the right hand side, the
estimate~(\ref{TextPrincipalEstimateD}) can be similarly reduced to
showing the following inequality $$ \biggl\| T_{\phi_n}
\Bigl(\underbrace{V, \ldots, V}_{\text{$(n-1)$-times}}
\Bigr)\biggr\|_{\frac n{n-1}} \leq c_n\, \left\|
  f^{(n)}\right\|_\infty\, \left\| V \right\|_n^{n-1}, $$ where $$
\phi_n(x_0, x_1, \ldots, x_{n-1}) = f^{[n]}(x_0, x_1, \ldots, x_{n-1},
x_0). $$ The latter is the principal technical ingredient of our
approach in this paper allowing resolution of the conjecture of
L.S.~Koplienko.

In fact, we shall show that the estimate above holds for the wide
class of functions which we call polynomial integral momenta, and for
every non-trivial $n$-tuples of spaces~$(L^{\alpha_1}, \ldots,
L^{\alpha_n})$.  This result is given in Theorem~\ref{MainTheoremExt}
below.

\subsection*{Theorem~\ref{MainTheoremExt} for the first order divided
  difference}

The proof of Theorem~\ref{MainTheoremExt} is highly technical.  Before
we proceed with the detailed argument, we shall explain a key idea
using the simplest non-trivial example of polynomial integral momentum:
the first order divided difference~$f^{[1]}$.  In the latter case,
Theorem~\ref{MainTheoremExt} takes the following simpler form.

\begin{theorem}
  \label{PSLipschitzAlt}
  If~$f \in C^1$ and $\|f'\|_\infty<\infty$, then the
  operator~$T_{f^{[1]}}$ is bounded on every~$L^\alpha$, with $1 <
  \alpha < \infty$, and $$ \left\| T_{f^{[1]}} \right\| \leq c_\alpha
  \, \left\| f' \right\|_\infty, $$ where the constant~$c_\alpha$
  depends only on $\alpha$.
\end{theorem}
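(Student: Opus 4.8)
The plan is to split $f$ by a dyadic frequency decomposition so that each frequency piece lands in the class $\cC_1$ treated by Lemma~\ref{ClassNewDef}, and then to recombine the pieces using the boundedness on $L^\alpha$, $1<\alpha<\infty$, of the triangular truncation (the noncommutative Riesz projection). The recombination step is where I expect the real difficulty to lie.

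I would begin with elementary reductions. By Lemma~\ref{CompactSupportTphi} one may assume that $f^{[1]}$ is compactly supported, so that $T_{f^{[1]}}(x)=\sum_{l,m\in\Z}f^{[1]}(l,m)\,E_l x E_m$ is a finite sum and all constants below will be independent of the support. Separating the terms with $l=m$, the diagonal part $\sum_l f'(l)\,E_l x E_l$ is bounded on every $L^\alpha$ by $\|f'\|_\infty$, since the pinching $x\mapsto\sum_l E_l x E_l$ is a contraction of $L^\alpha$ and rescaling the blocks $E_l x E_l$ by the bounded scalars $f'(l)$ costs no more than $\|f'\|_\infty$. Using the symmetry relation $\|T_\phi\|=\|T_{\bar\phi}\|$ of Lemma~\ref{MOI-algebra}, it then suffices to estimate the strictly upper-triangular part $x\mapsto\sum_{l>m}f^{[1]}(l,m)\,E_l x E_m$.

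For this part I would decompose $f$ in frequency: fix a smooth dyadic resolution of unity $1=\sum_{k\ge0}\chi_k$ on $\Rl$, with $\chi_0$ supported near the origin and $\chi_k$ supported in $\{2^{k-1}\le|\xi|\le2^{k+1}\}$ for $k\ge1$, and let $f_k$ be the function determined (up to an irrelevant additive constant) by $\widehat{f_k'}=\chi_k\,\widehat{f'}$; this is legitimate because $f'$ is bounded, hence a tempered distribution. Each $f_k$ is band-limited, so $f_k',f_k''\in L^2$, whence $f_k\in\W_1$ and therefore $f_k^{[1]}\in\cC_1$ as recalled above; thus $T_{f_k^{[1]}}$ is bounded on every $L^\alpha$. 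The key structural observation is that, after rescaling the integer lattice by $2^k$, the symbol $f_k^{[1]}$ becomes, uniformly in $k$, a symbol at a fixed scale whose $\cC_1$-norm is a constant multiple of $\|f'\|_\infty$; equivalently, the operators $T_{f_k^{[1]}}$ behave like the Littlewood--Paley pieces of a Calder\'on--Zygmund operator, each of norm at most a constant times $\|f'\|_\infty$, and $\sum_{k\ge0}T_{f_k^{[1]}}$ reproduces the upper-triangular part of $T_{f^{[1]}}$.

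The main obstacle is the summation of these pieces: the triangle inequality $\|T_{f^{[1]}}\|\le\sum_k\|T_{f_k^{[1]}}\|$ diverges — and this divergence is exactly why the Krein-type $\cC_1$ estimate does not by itself yield a Lipschitz bound — so one must exploit the near-orthogonality of the pieces rather than sum absolutely. I would carry this out by exhibiting $\sum_k T_{f_k^{[1]}}$ as a generalised, operator-valued triangular truncation acting on an $\ell^2$-amplification of $x$, and invoking the boundedness of such truncations on UMD-valued noncommutative $L^\alpha$-spaces together with the attendant square-function inequality; this is the step where the hypothesis $1<\alpha<\infty$ is indispensable and where the constant $c_\alpha$ is produced, the estimate being false at the endpoints. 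It is also precisely this mechanism that has no direct multilinear analogue, which is why the proof of Theorem~\ref{MainTheoremCor} for $n\ge2$ in the sequel has to proceed along different lines.
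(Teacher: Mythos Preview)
Your route is not the paper's, and the step you yourself flag as the main obstacle is left as a genuine gap.

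The paper makes no frequency decomposition of~$f$. It proves instead the recursive estimate of Lemma~\ref{ImprovedDOI-base-rec}: for $2<\alpha<\infty$ and $\frac12=\frac1\alpha+\frac1\beta$,
\[
\|T_{f^{[1]}}\|_\alpha\le c_\alpha\bigl(1+\|T_{f^{[1]}}\|_\beta\bigr).
\]
This is obtained by pairing $T_{f^{[1]}}(x)$ with a lower-triangular $y\in L^{\alpha'}$, factoring $y=ab$ into lower-triangular $a\in L^2$, $b\in L^\beta$ via the noncommutative Hardy-space factorisation \cite[Theorem~8.3]{PiXu2003}, applying the elementary identity~\eqref{PhiMrepTriv} to transfer $T_{f^{[1]}}$ onto $a$ or $b$ separately, and rewriting the ratios $\frac{l-k}{l-m}$ through imaginary powers by Lemma~\ref{DecompositionLemma}; the resulting maps $z\mapsto z_s$ are controlled by Lemma~\ref{MarcCorl}, and the term landing on $a\in L^2$ costs only $\|f'\|_\infty$ by Remark~\ref{phiHS}. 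Combining the recursion with the interpolation inequality $\|T_{f^{[1]}}\|_\beta\le\|T_{f^{[1]}}\|_\alpha^\theta\,\|T_{f^{[1]}}\|_2^{1-\theta}\le\|T_{f^{[1]}}\|_\alpha^\theta$ yields a closed inequality in $\|T_{f^{[1]}}\|_\alpha$ and hence the bound. This factorisation--bootstrap scheme is the whole point of Section~\ref{sec:doi-case}: it is designed so that the same template extends to $n\ge2$ (via Lemma~\ref{DecomLemmaII} and Theorem~\ref{ImprovedDOI}), whereas, as the paper notes, the original method of~\cite{PS-Lipschitz} does not.

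In your proposal two things are missing. The first is minor but real: from $f'\in L^\infty$ alone, $\widehat{f'}$ is only a tempered distribution, so $\chi_k\widehat{f'}$ need not be a function and the inference ``$f_k$ band-limited $\Rightarrow f_k',f_k''\in L^2$'' is false (take $f'=\cos$); membership of $f_k$ in $\W_1$ and the asserted uniform-in-$k$ $\cC_1$-bound both need an argument you have not given. The second is the main gap: you correctly identify the summation $\sum_k T_{f_k^{[1]}}$ as the hard step and then only \emph{name} a mechanism --- ``operator-valued triangular truncation on an $\ell^2$-amplification'', ``UMD-valued $L^\alpha$'', ``square-function inequality'' --- without specifying which truncation, which square function, or what almost-orthogonality the operators $T_{f_k^{[1]}}$ are supposed to enjoy (they are not localised in $|l-m|$ in any evident way). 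A vector-valued harmonic-analysis proof along such lines can indeed be made to work --- it is essentially the argument of~\cite{PS-Lipschitz} --- but it is substantial, not a sentence, and as you yourself observe it does not generalise to the multilinear setting that this section is meant to prepare.
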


Theorem~\ref{PSLipschitzAlt} was established
in~\cite[Theorem~2]{PS-Lipschitz}, but the method
of~\cite{PS-Lipschitz} does not allow extension to the higher
dimensions.  Here we give a new proof suitable for our
present purpose.

}

\begin{remark}
  The result of Theorem \ref{PSLipschitzAlt} cannot be extended to the
  cases $\alpha=1,\infty$; see \cite{F1,F2,F3} for counterexamples.
\end{remark}



Recall that the spectrum of~$H$ is concentrated at integral points.
That is, the operator~$T_{f^{[1]}}$ associated with $H$ is given by
the multiple sum $$ T_{f^{[1]}}(x)=\sum_{l,m\in\Z}f^{[1]}(l,m)\, E_{l}
x E_{m}.$$ We shall also consider~$f^{[1]}$ only on compact subsets
(as in Lemma~\ref{CompactSupportTphi}); however, we shall not reflect
this in our notations.  The latter enables to replace double operator
integrals with finite sums.  Note also that the estimates we present
below do not depend on the support of~$f^{[1]}$.

The proof of Theorem~\ref{PSLipschitzAlt} is based on the following lemmas.

\begin{lemma}[{\cite[Lemma~6]{PS-Lipschitz}}]
  \label{DecompositionLemma}
  There is a function~$g: \Rl \mapsto \Cx$ such that
$$ \int_{\Rl}\left| s \right|^n \left| g(s) \right| \, ds < + \infty,\ \ n \geq 0,$$
and such that, for every~$\mu \geq \lambda > 0$,
$$ \frac\lambda \mu = \int_{\Rl} g(s)\, \lambda^{is} \mu^{-is}\, ds. $$
\end{lemma}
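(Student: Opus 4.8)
The plan is to read the asserted identity as a Fourier inversion formula. Writing $x = \lambda/\mu \in (0,1]$ and $u = \log x \leq 0$, and using $\lambda^{is}\mu^{-is} = x^{is} = e^{ius}$, the claim is equivalent to producing a function $g$ with $\int_\Rl |s|^n|g(s)|\,ds < \infty$ for all $n \geq 0$ whose (suitably normalized) inverse Fourier transform $u \mapsto \int_\Rl g(s)e^{ius}\,ds$ coincides with $e^u$ on the half-line $u \leq 0$. By Fourier duality, the moment condition on $g$ is equivalent to smoothness together with rapid decay of this inverse transform, so it suffices to extend the function $u \mapsto e^u$ from $(-\infty,0]$ to a smooth function on $\Rl$ that, together with all its derivatives, decays fast enough at $+\infty$. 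Since $e^u$ and all of its derivatives already decay rapidly as $u \to -\infty$, a smooth truncation to the right of $0$ accomplishes this.

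Concretely, I would fix $\chi \in C^\infty(\Rl)$ with $\chi \equiv 1$ on $(-\infty,0]$ and $\chi \equiv 0$ on $[1,\infty)$, and set $h(u) := e^u\chi(u)$. Then $h \in C^\infty(\Rl)$, $h(u) = e^u$ for $u \leq 0$, $h$ is supported in $(-\infty,1]$, and every derivative $h^{(k)}$ is bounded and integrable on $\Rl$ (on $[0,1]$ this is clear; on $(-\infty,0]$ one has $h^{(k)}(u) = e^u$). Next I would define $g := \frac{1}{2\pi}\widehat h$, where $\widehat h(s) := \int_\Rl h(u)e^{-ius}\,du$. Since $h, \widehat h \in L^1(\Rl)$ and $h$ is continuous, Fourier inversion gives $h(u) = \int_\Rl g(s)e^{ius}\,ds$ for every $u \in \Rl$; in particular $\int_\Rl g(s)e^{ius}\,ds = e^u$ whenever $u \leq 0$.

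To verify the moment bounds I would use $\widehat{h^{(k)}}(s) = (is)^k\widehat h(s)$, whence $|s|^k|\widehat h(s)| \leq \|h^{(k)}\|_{L^1(\Rl)}$ for all $k \geq 0$; splitting $\int_\Rl |s|^n|g(s)|\,ds$ into the regions $|s| \leq 1$ and $|s| > 1$ and bounding the latter by $\frac{1}{2\pi}\|h^{(n+2)}\|_{L^1(\Rl)}\int_{|s|>1}s^{-2}\,ds$ shows $\int_\Rl|s|^n|g(s)|\,ds < \infty$ for every $n \geq 0$. Finally, for $\mu \geq \lambda > 0$ I would substitute $u = \log(\lambda/\mu) \leq 0$ into the inversion formula: since $\lambda^{is}\mu^{-is} = e^{ius}$, this gives $\int_\Rl g(s)\,\lambda^{is}\mu^{-is}\,ds = h(u) = e^u = \lambda/\mu$, which is the required identity. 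I do not expect a genuine obstacle here; the only points that need a little care are the bookkeeping of the Fourier normalization and checking that $h$ and all of its derivatives lie in $L^1(\Rl)$, both of which follow at once from the explicit formula $h = e^u\chi(u)$.
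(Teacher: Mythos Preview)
Your argument is correct. The paper itself does not prove this lemma; it merely cites it from \cite{PS-Lipschitz}. Your construction --- extend $u\mapsto e^u$ from $(-\infty,0]$ to a Schwartz-type function via a smooth cutoff and take $g$ to be its Fourier transform --- is exactly the natural way to produce such a $g$, and all the claimed estimates hold. One small presentational point: you invoke Fourier inversion (which requires $\widehat h\in L^1$) before you establish the moment bounds that guarantee $\widehat h\in L^1$; simply swap the order of those two paragraphs, first deriving $|s|^k|\widehat h(s)|\le\|h^{(k)}\|_{L^1}$ (which needs only $h^{(k)}\in L^1$) and then concluding $\widehat h\in L^1$ and applying inversion.
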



\begin{lemma}[{\cite[Lemma~5]{PS-Lipschitz}}]
  \label{MarcCorl}
  Let~$x \in L^\alpha$, with $1 < \alpha < \infty$. Then $$ x_s = \sum_{l < m} (m - l)^{is} \, E_l
  x E_m,\quad s \in \Rl,$$ is well defined and there is a constant~$c_\alpha > 0$ such that
  $$ \left\| x_s  \right\|_\alpha \leq \, c_\alpha \, \left( 1 + \left| s \right|
  \right)\, \left\| x \right\|_\alpha. $$
\end{lemma}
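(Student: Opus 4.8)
The plan is to recognize the map $x\mapsto x_s$ as a double operator integral $T_{\psi_s}$ with a Toeplitz-type symbol and to estimate it by transferring the problem to a classical Fourier multiplier on the circle. Fix once and for all a cut-off $\rho\in C^\infty(\Rl)$ with $0\le\rho\le1$, $\rho\equiv 1$ on $[1,\infty)$ and $\rho\equiv 0$ on $(-\infty,\tfrac12]$, and put
$$\Psi_s(\xi):=\rho(\xi)\,\xi^{is}\ \ (\xi>0),\qquad \Psi_s(\xi):=0\ \ (\xi\le0),\qquad \psi_s(\lambda,\mu):=\Psi_s(\mu-\lambda).$$
Then $\Psi_s\in C^\infty(\Rl)$ and, on the integer lattice, $\psi_s(l,m)=(m-l)^{is}$ when $l<m$ and $\psi_s(l,m)=0$ otherwise, so that $T_{\psi_s}(x)=\sum_{l<m}(m-l)^{is}E_lxE_m=x_s$; note that the triangular truncation is already absorbed into $\psi_s$ because $\Psi_s$ vanishes on $(-\infty,0]$. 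As in Lemma~\ref{CompactSupportTphi} it is enough to deal with finite sums (compactly supported $\psi_s$): a uniform bound $\|T_{\psi_s}\|_{L^\alpha\to L^\alpha}\le c_\alpha(1+|s|)$ gives at once the well-definedness of $x_s$ on $L^\alpha$ by density and the asserted inequality.

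For the estimate I would use transference. Since $\psi_s$ depends only on the difference $\mu-\lambda$, and $\{e^{i\theta H}\}_{\theta\in\mathbb T}=\{\sum_l e^{il\theta}E_l\}_\theta$ is a strongly continuous group of isometries of $L^\alpha$ acting by conjugation $x\mapsto e^{i\theta H}xe^{-i\theta H}=\sum_{l,m}e^{i(l-m)\theta}E_lxE_m$, the Coifman--Weiss transference principle bounds $\|T_{\psi_s}\|_{L^\alpha\to L^\alpha}$ by the norm of the Fourier multiplier on $L^\alpha(\mathbb T;L^\alpha(M,\tau))$ with symbol $(\Psi_s(k))_{k\in\Z}$. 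By the de Leeuw restriction theorem this is at most the norm of $\Psi_s$ as a Fourier multiplier on $L^\alpha(\Rl;L^\alpha(M,\tau))$. Finally $\|\Psi_s\|_\infty\le1$, and for $\xi>0$ one has $\xi\,\Psi_s'(\xi)=\xi^{is}\big(\xi\rho'(\xi)+is\,\rho(\xi)\big)$, whence $\sup_\xi|\xi\,\Psi_s'(\xi)|\le\sup_\xi|\xi\rho'(\xi)|+|s|\le C(1+|s|)$ with $C$ an absolute constant; so the one-dimensional Mikhlin--H\"ormander (Marcinkiewicz) multiplier theorem — where the hypothesis $1<\alpha<\infty$ is essential — bounds this multiplier norm by $c_\alpha\big(\|\Psi_s\|_\infty+\sup_\xi|\xi\Psi_s'(\xi)|\big)\le c_\alpha'(1+|s|)$. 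Composing the three inequalities yields $\|x_s\|_\alpha\le c_\alpha(1+|s|)\|x\|_\alpha$.

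I expect the genuine obstacle to be in justifying the transference step when $L^\alpha=L^\alpha(M,\tau)$ is a \emph{noncommutative} $L^\alpha$-space and the spectral projections $E_l$ are not one-dimensional, so that the scalar Toeplitz Schur multiplier theory does not apply directly. The way around it is to run the argument vector-valued: a scalar function obeying the one-dimensional Mikhlin condition with constant $A$ is automatically an $L^\alpha(\Rl;X)$-Fourier multiplier with norm at most $c_\alpha\,A\,\mathrm{UMD}(X)$ for every UMD Banach space $X$ (Bourgain's vector-valued multiplier theorem), and $X=L^\alpha(M,\tau)$ is UMD for $1<\alpha<\infty$ with $\mathrm{UMD}(X)$ bounded by a constant depending only on $\alpha$; feeding this through the de Leeuw restriction and the Coifman--Weiss transference above keeps the final bound at $c_\alpha(1+|s|)$. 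Alternatively one may first split off the triangular truncation $x\mapsto\sum_{l<m}E_lxE_m$, which is bounded on $L^\alpha(M,\tau)$ for $1<\alpha<\infty$ by Macaev's theorem, and transfer only the remaining genuine Toeplitz multiplier (the smoothed $\xi\mapsto\xi^{is}$). In every version the quantitative heart of the matter is that in dimension one the multiplier condition involves only a single derivative and $|\xi\,\tfrac{d}{d\xi}\xi^{is}|=|s|$, which is precisely what makes the dependence on $|s|$ \emph{linear} rather than polynomial or exponential.
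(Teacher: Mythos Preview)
The paper does not prove this lemma; it is quoted verbatim from \cite[Lemma~4]{PS-Lipschitz} and used as a black box. Your argument is correct and is essentially the proof given in that reference: one recognises $x\mapsto x_s$ as a Schur multiplier with Toeplitz symbol $\Psi_s(m-l)$, transfers via the isometric $\mathbb T$-action $\theta\mapsto e^{i\theta H}(\,\cdot\,)e^{-i\theta H}$ to a Fourier multiplier problem on $L^\alpha(\mathbb T;L^\alpha(M,\tau))$, and invokes the UMD property of noncommutative $L^\alpha$ ($1<\alpha<\infty$) together with the vector-valued Mikhlin/Marcinkiewicz theorem to obtain the bound $c_\alpha(1+|s|)$, the linear dependence coming from $|\xi\,\tfrac{d}{d\xi}\xi^{is}|=|s|$. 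The only cosmetic remark is that the transferred multiplier has symbol $k\mapsto\Psi_s(-k)$ rather than $\Psi_s(k)$ (since the $k$-th spectral component of the conjugation action consists of those $x$ with $E_lxE_m=0$ unless $l-m=k$), but this reflection has the same Mikhlin constant, so nothing changes.
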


The principal step toward Theorem~\ref{PSLipschitzAlt} is the following lemma.

\begin{lemma}
  \label{ImprovedDOI-base-rec}
  Let~$2 < \alpha, \beta < \infty$ be such that~$2^{-1} = \alpha^{-1} +
  \beta^{-1}$.  Then there is a constant~$c_\alpha > 0$ such that, for
  every~$f \in C^1$ with~$\left\| f' \right\|_\infty \leq 1$, $$
  \left\| T_{f^{[1]}} \right\|_\alpha \leq c_\alpha \left( 1 + \left\|
      T_{f^{[1]}} \right\|_\beta \right), $$ where~$\left\|
    T_{f^{[1]}} \right\|_\alpha$ is the norm of the operator $T_{f^{[1]}}:
  L^\alpha\mapsto L^\alpha$.
\end{lemma}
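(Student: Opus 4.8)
The plan is to reduce the $L^{\alpha}\to L^{\alpha}$ bound to a Hilbert-space ($L^{2}$) bound — where Remark~\ref{phiHS} gives a free estimate — and then recover the dependence on $\|T_{f^{[1]}}\|_{\beta}$ through the divided-difference identity together with Lemmas~\ref{DecompositionLemma} and~\ref{MarcCorl}. First I would split $T_{f^{[1]}}=D+T_{f^{[1]}}^{\mathrm{off}}$ into its diagonal part $D(x)=\sum_{l}f'(l)\,E_{l}xE_{l}$ and the off-diagonal part; since $D$ is the composition of a diagonal conditional expectation with left multiplication by $f'(H)$, one has $\|D\|_{\alpha}\le\|f'\|_{\infty}\le 1$, which accounts for the summand $1$. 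For $T_{f^{[1]}}^{\mathrm{off}}$, the point is that for $z\in L^{\alpha}$ ($\alpha>2$) one has $\|z\|_{\alpha}^{2}=\|z^{*}z\|_{\alpha/2}$, and the exponents $\alpha/2$ and $\beta/2$ are conjugate exactly because $\tfrac1\alpha+\tfrac1\beta=\tfrac12$, so
\[\|z\|_{\alpha}^{2}=\sup\big\{\,|\tau(z^{*}z\,w)|:\ w\in L^{\beta/2},\ \|w\|_{\beta/2}\le 1\,\big\}.\]
Writing $w=ab$ with $a,b\in L^{\beta}$, $\|a\|_{\beta},\|b\|_{\beta}\le 1$ (polar decomposition), using $\tau(z^{*}z\,w)=\tau\big((zb^{*})^{*}(za)\big)$ and the Cauchy--Schwarz inequality in $L^{2}$, it suffices to prove
\[\big\|\big(T_{f^{[1]}}^{\mathrm{off}}(x)\big)\,a\big\|_{2}\le c_{\alpha}\,\|T_{f^{[1]}}\|_{\beta}\,\|x\|_{\alpha}\,\|a\|_{\beta},\qquad x\in L^{\alpha},\ a\in L^{\beta}.\]

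Inserting a resolution of the identity to the right of $a$ realizes $\big(T_{f^{[1]}}^{\mathrm{off}}(x)\big)a$ as the bilinear multiple operator integral $\sum_{l,m,q}f^{[1]}(l,m)\,E_{l}xE_{m}aE_{q}$, restricted to $l\ne m$, understood (Definition~\ref{nmoi}) as a map $L^{\alpha}\times L^{\beta}\to L^{2}$ since $\tfrac1\alpha+\tfrac1\beta=\tfrac12\le 1$; by Lemma~\ref{CompactSupportTphi} all symbols may be taken compactly supported. On $\{\lambda_{0}\ne\lambda_{1}\}$ I would then use the divided-difference identity
\[f^{[1]}(\lambda_{0},\lambda_{1})=f^{[1]}(\lambda_{0},\lambda_{2})\,\frac{\lambda_{0}-\lambda_{2}}{\lambda_{0}-\lambda_{1}}+f^{[1]}(\lambda_{1},\lambda_{2})\,\frac{\lambda_{2}-\lambda_{1}}{\lambda_{0}-\lambda_{1}},\]
which follows from $f(\lambda_{0})-f(\lambda_{1})=(f(\lambda_{0})-f(\lambda_{2}))+(f(\lambda_{2})-f(\lambda_{1}))$, split the domain according to the order of $\lambda_{0},\lambda_{1},\lambda_{2}$, and on each piece apply Lemma~\ref{DecompositionLemma} to the scalar ratio — which, once the order is fixed, is a genuine quotient $\lambda/\mu$ with $0<\lambda\le\mu$ on the favourable pieces (for instance $\frac{\lambda_{0}-\lambda_{2}}{\lambda_{0}-\lambda_{1}}=\int_{\Rl}g(s)\,(\lambda_{2}-\lambda_{0})^{is}(\lambda_{1}-\lambda_{0})^{-is}\,ds$ on $\{\lambda_{0}<\lambda_{2}<\lambda_{1}\}$), with minor variants (splitting off an integer part, or pivoting at $\lambda_{2}$ on the other side) on the remaining pieces. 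This presents the operator, on each piece, as $\int_{\Rl}g(s)\,[\cdots]_{s}\,ds$, where $[\cdots]_{s}$ is, up to reordering the three variables, a composition of a divided-difference operator of the form $T_{f^{[1]}}$ — estimated by $\|T_{f^{[1]}}\|_{\beta}$ via Lemma~\ref{MOI-algebra}(\ref{MOI-A-product}),(\ref{MOI-A-composition}) — with triangular truncations carrying power weights $(\cdot)^{\pm is}$ in a gap, for which Lemma~\ref{MarcCorl} gives the factor $c_{\alpha}(1+|s|)$ (the case of a reversed gap $l>m$ following from Lemma~\ref{MarcCorl} by transposition), the factors being glued in $L^{2}$ by a single use of H\"older's inequality with the relation $\tfrac1\alpha+\tfrac1\beta=\tfrac12$. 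Since $\int_{\Rl}(1+|s|)\,|g(s)|\,ds<\infty$ by Lemma~\ref{DecompositionLemma}, Minkowski's integral inequality yields the displayed estimate, and hence the lemma; this recursion, combined with interpolation and the trivial $L^{2}$ bound $\|T_{f^{[1]}}\|_{2}\le\|f^{[1]}\|_{\infty}\le\|f'\|_{\infty}$ of Remark~\ref{phiHS}, is what will deliver Theorem~\ref{PSLipschitzAlt}.

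The main obstacle I anticipate is exactly this last step. One must arrange the decomposition of the domain and the intermediate factorizations so that, at every stage, the operators appearing genuinely satisfy the existence and boundedness hypotheses needed to invoke the composition and product rules of Lemma~\ref{MOI-algebra}(\ref{MOI-A-product}),(\ref{MOI-A-composition}) — strong-operator limits behave badly under these manipulations, cf. the footnote to that lemma — so that the only H\"older split required is precisely $\tfrac1\alpha+\tfrac1\beta=\tfrac12$, and so that the residual symbol obtained after extracting the imaginary power $(\cdot)^{is}$ is exactly of the form $T_{f^{[1]}}$ on $L^{\beta}$ and not a worse residual term. The compact-support reduction of Lemma~\ref{CompactSupportTphi} is what keeps the intermediate quantities well defined, in particular the (possibly unbounded) operator $f(H)$ that enters implicitly through the divided difference; and the finiteness of all moments of $g$ in Lemma~\ref{DecompositionLemma} is what makes the $s$-integral converge against the linear growth supplied by Lemma~\ref{MarcCorl}.
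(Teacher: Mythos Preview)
Your overall strategy---duality, the divided-difference identity, Lemma~\ref{DecompositionLemma}, Lemma~\ref{MarcCorl}---matches the paper's, but the factorization step misses the decisive ingredient. In the paper one tests $T_{f^{[1]}}(x)$ against $y\in L^{\alpha'}$, reduces by triangular truncation to $x$ upper-triangular and $y$ lower-triangular, and then factors $y=ab$ with $a\in L^{2}$, $b\in L^{\beta}$ \emph{both lower-triangular}, invoking the noncommutative analytic (Hardy-space) factorization of \cite[Theorem~8.3]{PiXu2003}. The triangularity is the whole point: in
\[
\tau\bigl(ab\,T_{f^{[1]}}(x)\bigr)=\sum_{l\le k\le m,\ l\neq m}f^{[1]}(l,m)\,\tau\bigl(a_{mk}b_{kl}x_{lm}\bigr)
\]
the newly introduced index $k$ is \emph{automatically sandwiched between $l$ and $m$}, so both coefficients $\frac{k-l}{m-l}$ and $\frac{m-k}{m-l}$ in~(\ref{PhiMrepTriv}) lie in $[0,1]$ and Lemma~\ref{DecompositionLemma} applies directly. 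Moreover, because $a\in L^{2}$, the term carrying $f^{[1]}(k,m)$ is handled by the free $L^{2}$-bound of Remark~\ref{phiHS}, while only the term carrying $f^{[1]}(l,k)$ produces the factor $\|T_{f^{[1]}}\|_{\beta}$; this is where the $1+\|T_{f^{[1]}}\|_{\beta}$ really comes from, not merely from the diagonal.

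Your polar decomposition $w=ab$ with $a,b\in L^{\beta}$ is not triangular, and the index $q$ you insert lies on the \emph{outside} of the configuration (to the right of $a$), with no order relation to $l,m$. On the pieces where $q\notin[\min(l,m),\max(l,m)]$ neither coefficient in the identity lies in $[0,1]$: for instance on $\{l<m<q\}$ one has $\frac{q-l}{m-l}>1$ and $\frac{q-m}{l-m}<0$, and ``splitting off an integer part'' leaves a residual ratio that is still unbounded. More seriously, on that same piece the bilinear form factors exactly as the strictly upper-triangular part of $T_{f^{[1]}}(x)$ times the strictly upper-triangular part of $a$, so the only estimate available is via $\|T_{f^{[1]}}\|_{\alpha}$---the very quantity you are trying to control---and the argument becomes circular. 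The triangular factorization of the dual element is therefore not a technicality but the mechanism that forces the pivot to be \emph{interior}; without it Lemma~\ref{DecompositionLemma} cannot be applied on all pieces and the reduction to $\|T_{f^{[1]}}\|_{\beta}$ fails.
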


Throughout the text, we agree that the constant symbols~$c_\alpha$ are
allowed to vary from line to line, or even within a line.

\begin{proof}[Proof of Lemma~\ref{ImprovedDOI-base-rec}]
  Let~$x \in L^\alpha$ and let~$y \in L^{\alpha'}$ where~$\alpha'$ is
  the conjugate exponent, i.e., $\alpha^{-1} + \alpha'^{-1} = 1$.  We
  shall prove the estimate $$\left|\tau\left(y T_{f^{[1]}}(x)\right)
  \right|\leq\,c_\alpha\left(1+\left\|T_{f^{[1]}}\right\|_\beta
  \right)\,\left\|x\right\|_\alpha\,\left\|y\right\|_{\alpha'},$$ for
  some constant~$c_\alpha > 0$, which immediately implies the claim of
  the lemma.

  Let us fix~$x \in L^\alpha$ and~$y \in L^{\alpha'}$.  Without loss
  of generality, we may assume that~$\left\| x \right\|_\alpha =
  \left\| y \right\|_{\alpha'} = 1$. The triangular truncation is a
  bounded linear operator on~$L^\alpha$, $1< \alpha < \infty$ (see,
  e.g., \cite{DDPS}).  Thus, we may further assume that the
  operator~$x$ is upper-triangular and~$y$ is lower-triangular with
  respect to the family~$\left\{E_l \right\}_{l \in \Z}$.\footnote{An
    element~$x \in M$ is called {\it upper-triangular\/} with respect
    to a family of pairwise orthogonal projections~$\left\{E_l
    \right\}_{l \in \Z}$ if and only if~$E_l x E_m = 0$ for every~$l>
    m$; it is called {\it lower-triangular\/} if and only if~$x^*$ is
    upper-triangular.}  We also may assume that~$x$ is off-diagonal
  as, {\coloremphasize by standard techniques, $T_{f^{[1]}}$ is
    bounded on the diagonal subspace of~$L^\alpha$ (details can be
    found in \cite{PS-Lipschitz}).}

  We can assume that $y$ has $\tau$-finite left and right supports
  because the class of lower-triangular operators with $\tau$-finite
  supports is norm dense in the lower-triangular part of
  $L^{\alpha'}$.  Let us fix~$\epsilon > 0$. Since
  $\frac1{\alpha'}=\frac12+\frac1\beta$, there is a factorization
  $y=ab$, where $a \in L^2$ and $b \in L^\beta$ are lower-triangular
  and
  \begin{equation}
    \label{PLfactorization}
    1 \leq \left\| a \right\|_2 \, \left\| b \right\|_\beta \leq
    1 + \epsilon.
  \end{equation}
  Such factorization always exists due to
  \cite[Theorem~8.3]{PiXu2003}.

For every element~$z \in M$, we set~$z_{lm} := E_l z  E_m$ for brevity.
Since $x$ is upper triangular and $y$ is lower triangular,
\begin{equation*}
    \tau \left( y T_{f^{[1]}}(x)\right) = \tau \left( a b T_{f^{[1]}}(x)
    \right) =  \sum_{l \leq k \leq m \atop l \neq m} f^{[1]}(l, m) \tau
    \left( a_{mk}\, b_{kl}\, x_{lm} \right).
\end{equation*}
Observing the straightforward decomposition {\coloremphasize (in the
  proof of Theorem~\ref{MainTheoremExt} this simple decomposition is
  replaced by Lemma~\ref{PhiMrep})}
  \begin{equation}
    \label{PhiMrepTriv}
    f^{[1]}(l, m) = \frac
    {l - k}{l - m} \, f^{[1]} (l, k) + \frac {k
      - m}{l - m} \, f^{[1]}(k, m),\ \ l \leq k
    \leq m,\ l \neq m,
  \end{equation}
  we obtain
  \begin{multline*}
    \tau\left(yT_{f^{[1]}}(x)\right)=\sum_{l<k\leq
      m}\frac{l-k}{l-m}\,\tau\left(a_{m k}
      \left(T_{f^{[1]}}(b)\right)_{k l}x_{l m}\right) \\
    +\sum_{l\leq k <
      m}\frac{k-m}{l-m}\,\tau\left(\left(T_{f^{[1]}}(a)\right)_{m k}
      b_{k l} x_{l m} \right) =: S_1 + S_2.
  \end{multline*}
  We shall estimate the term~$S_1$. The estimate for the term~$S_2$
  can be obtained similarly. Observe that
  Lemma~\ref{DecompositionLemma} yields the representation $$ \frac
  {l- k}{l - m} = \int_{\Rl} g(s) \, \left( k - l \right)^{is} \left(
    m - l \right)^{-is}\, ds,\ \ l \leq k < m, $$ where~$g: \Rl
  \mapsto \Cx$ is such that
  \begin{equation}
    \label{Gfunction}
    \int_{\Rl} \left| s \right|^n \, \left| g(s)  \right| \, ds < +
    \infty,\ \ n \geq 0.
  \end{equation}
  Thus, if we set (as in Lemma~\ref{MarcCorl}) $$ x_s := \sum_{l < m}
  \left( m - l \right)^{is} \, x_{l m} \ \ \text{and}\ \
  \left(T_{f^{[1]}}(b)\right)_s := \sum_{l < k} \left( k - l
  \right)^{is} \, \left(T_{f^{[1]}}(b)\right)_{k l},\ \ s \in \Rl, $$
  then, by Lemma \ref{MOI-algebra} (\ref{MOI-A-product}), $$ S_1 =
  \int_{\Rl} g(s) \, \tau \left( a \left(T_{f^{[1]}}(b)\right)_s
    x_{-s} \right)\, ds. $$ Subsequent application of the
  noncommutative H\"older inequality, Lemma~\ref{MarcCorl} to both
  $\left(T_{f^{[1]}}(b)\right)_s$ and~$x_s$, and the estimate
  (\ref{PLfactorization}) implies
  \begin{multline*}
    \left| \tau \left( a \left(T_{f^{[1]}}(b)\right)_s x_{-s} \right)
    \right| \leq \left\| a \right\|_2 \, \left\|
      \left(T_{f^{[1]}}(b)\right)_s \right\|_\beta \, \left\| x_{-s}
    \right\|_\alpha \\ \leq \, c_\alpha \, \left( 1 + \left| s \right|
    \right)^2 \, \left\| a \right\|_2 \, \left\| T_{f^{[1]}}(b)
    \right\|_\beta \, \\ \leq c_\alpha \, \left( 1 + \left| s \right|
    \right)^2 \, \left\| T_{f^{[1]}} \right\|_\beta \, \left\| a
    \right\|_2 \, \left\| b \right\|_\beta \\ \leq c_\alpha \, \left(
      1 + \left| s \right| \right)^2 \left\| T_{f^{[1]}}
    \right\|_\beta \, (1 + \epsilon).
  \end{multline*}
From~(\ref{Gfunction}), we derive $$ \left| S_1 \right| \leq \, c_\alpha \, (1
  + \epsilon)\, \left\| T_{f^{[1]}} \right\|_\beta \, \int_\Rl \left(
    1 + \left| s \right| \right)^2 \, \left| g(s) \right| \, ds \leq
  \, c_\alpha \, \left( 1 + \epsilon \right)\, \left\| T_{f^{[1]}}
  \right\|_\beta. $$ Observing that~$\epsilon > 0$ was arbitrary, we
  finally arrive at $$ \left| S_1 \right| \leq \, c_\alpha \, \left\|
    T_{f^{[1]}} \right\|_\beta. $$

The estimate of the term~$S_2$ is even simpler due to the fact that
the element~$a$ belongs to the Hilbert space~$L^2$ and, therefore, the
factor~$\left\| T_{f^{[1]}} \right\|_2$ does not exceed $\|f'\|_\infty\leq 1$ by Remark \ref{phiHS}. In other words, we have $$ \left| S_2 \right| \leq \,c_\alpha. $$

Combining the estimates for~$S_1$ and~$S_2$, we finally obtain $$
\left| \tau \left( y T_{f^{[1]}}(x)\right) \right| \leq \, c_\alpha
\, \left( 1 + \left\| T_{f^{[1]}} \right\|_\beta \right). $$
The lemma is completely proved.
\end{proof}

\begin{proof}[Proof of Theorem~\ref{PSLipschitzAlt}]
  Without loss of generality we assume that $\left\| f'
  \right\|_\infty \leq 1$.  Due to complex interpolation and duality,
  it is sufficient to give the proof when~$\alpha$ is ``close''
  to~$\infty$.

  Fix~$2 < \alpha < \infty$ sufficiently large and fix~$2 < \beta <
  \infty$ such that $\frac 12 = \frac 1\alpha + \frac 1\beta$. The
  exponent~$\beta$ approaches~$2$ as~$\alpha \rightarrow \infty$.
  Now, on one hand, Lemma~\ref{ImprovedDOI-base-rec} gives $$ \left\|
    T_{f^{[1]}} \right\|_\alpha \leq c_\alpha \left( 1 + \left\|
      T_{f^{[1]}} \right\|_\beta \right) $$ and, on the other hand,
  the complex interpolation method \cite{BL,DDP} provides
  \footnote{Note that $\theta$ approaches~$0$ as~$\alpha$ runs
    to~$\infty$; note also that~$\left\| T_{f^{[1]}} \right\|_2 \leq
    1$.} $$ \left\| T_{f^{[1]}} \right\|_\beta \leq \left\|
    T_{f^{[1]}} \right\|_\alpha^\theta,\ \ \text{where}\ \ \theta =
  \frac {2^{-1} - \beta^{-1}}{2^{-1} - \alpha^{-1}}. $$ The latter two
  inequalities, combined together, imply $$ \left\|T_{f^{[1]}}
  \right\|_\alpha \leq c_\alpha,$$ completing the proof.
\end{proof}

\subsection*{Reduction from higher to lower order polynomial integral
  momenta.}

{\coloremphasize The previous section explained our line of attack of
  the simplest polynomial integral momentum, the first divided
  difference~$f^{[1]}$.  We shall now say a few words explaining our
  approach to higher order polynomial integral momenta.  These will be
  reduced to the polynomial integral momenta of lower orders.  We
  shall demonstrate this using another simple example of (second
  order) polynomial integral momentum, the second order divided
  difference~$f^{[2]}$.  The reduction is based on the following
  representation for~$f^{[2]}$ (a similar but complex reduction
  formula of general polynomial integral momenta is given in
  Lemma~\ref{DecomLemmaII}):}
\begin{multline}
\label{SchemeReduction}
f^{[2]}(l, k, m) = \frac {l - k}{l - m} \,\phi_2 (l, k) + \frac {k -
  m}{l - m} \, \phi_2(m, k), \\ \text{where}\quad \phi_2 (l, m) =
\frac{f^{[1]}(l, m) - f'(m)}{l - m}.
\end{multline}
For simplicity assume that $l \leq k \leq m$ and $l \neq m$.  To see
the case~$n = 2$, we have to prove that the 
multilinear transformations given by Definition \ref{nmoi}
for both functions $$ \frac {l - k}{l - m} \, \phi_2 (l, k)
\ \ \text{and} \ \ \frac {k - m}{l - m} \, \phi_2 (m, k) $$ are
bounded.  Let us consider the first summand in the decomposition for
$f^{[2]}$ above.  Lemma~\ref{DecompositionLemma} gives $$ \frac {l -
  k}{l - m} = \int_{\Rl} g(s)\, \left( l - k \right)^{is} \left( l - m
\right)^{-is}\, ds,\ \ l < k \leq m, $$ which implies that we need to
study the operator $$ T_s (x, y) = \sum_{l < k \leq m} \left( k - l
\right)^{is} \left( m - l \right)^{-is} \phi_2 (l, k)\, E_l x E_k y
E_m. $$ If~$R_s$ is the mapping~$x \mapsto x_s$ from
Lemma~\ref{MarcCorl}, then, by Lemma~\ref{MOI-algebra}
(\ref{MOI-A-composition}), $$ T_s (x, y) = R_{-s} \left( T_{\phi_2}
  \left( R_s (x) \right) y \right). $$ Since~$R_s$ is bounded (see
Lemma~\ref{MarcCorl}), $T_s$ is also bounded, provided~$T_{\phi_2}$ is
bounded, i.e., we have reduced the question on the triple operator
integral down to the question on the double one.

We shall present a detailed technical account of the scheme above in
the following section.  We shall use the method of mathematical
induction.  The base of induction is proving that the operator
associated with~$\phi_2$, or in general, with~$\phi_m$, introduced
below, is bounded.  This part is done in Theorem~\ref{ImprovedDOI}.
It follows the lines of the proof of Theorem~\ref{PSLipschitzAlt},
with appropriate adjustments. The step of induction is reduction as
in~(\ref{SchemeReduction}).  This part is done in
Lemma~\ref{DecomLemmaII}. The scheme is finalised in the proof of
Theorem~\ref{MainTheoremExt}.

\section{Proof of Theorem \ref{MainTheoremCor}}
\label{sec:doi-fn}

In this section, we prove Theorem \ref{MainTheoremCor}, and this
requires some preparation.

\subsection*{Polynomial integral momenta.}
\label{sec:IPM}

Let $\Pl_n$ be the class of polynomials of~$n$ variables with real
coefficients.  Let~$\kappa > 0$ and let~$S^\kappa_n$ be the simplex $$
S^\kappa_n = \left\{ \left( s_0, \ldots, s_n \right) \in \Rl^{n + 1}:\
  \ \sum_{j = 0}^n s_j = \kappa,\ \ s_j \geq 0,\ \ 0 \leq j \leq n
\right\}. $$ We equip the simplex~$S^\kappa_n$ with the finite measure~$d\sigma_n$ defined by
\begin{equation}
\label{minv}
\int_{S^\kappa_n} \phi(s_0, \ldots, s_n)\, d\sigma_n =
\int_{R^\kappa_n} \phi\left( s_0, \ldots, s_{n-1}, \kappa - \sum_{j =
    0}^{n-1} s_j \right)\, dv_n,
\end{equation}
for every continuous function~$\phi: \Rl^{n+1} \mapsto \Cx$,
where
\begin{equation*}
  \label{Rn}
  R^\kappa_n = \left\{(s_0, \ldots, s_{n-1})
    \in \Rl^n:\ \ \sum_{j = 0}^{n-1} s_j \leq \kappa,\ \ s_j \geq 0,\ 0
    \leq j \leq n\right\}
\end{equation*}
and~$dv_n$ is the Lebesgue measure on~$\Rl^n$. The multiple integrals
in~(\ref{minv}) can be reduced to iterated integrals, which is
demonstrated in the proof of Lemma \ref{ddpim} below.  It can be seen
via a straightforward change of variables in~(\ref{minv}) that the
measure~$d\sigma_n$ is invariant under any permutation of the
variables~$s_0, \ldots, s_n$.  We set~$S_n:=S^1_n$ and~$R_n:=R^1_n$.

Let $$ \tilde s = \left( s_1, \ldots, s_n \right) \in R_n,\ \ (s_0,
\tilde s) \in S_n,\ \ s_0=1-\sum_{j=1}^n s_j. $$

Given $h \in C_b$, and $p \in \Pl_n$, we introduce
\begin{equation}
\label{2stars}
\phi_{n, h, p} \left( \tilde \lambda \right) = \int_{S_n} p\left(
  \tilde s \right)\, h \left( \sum_{j = 0}^n s_j \lambda_j \right)
\, d\sigma_n,
\end{equation}
where $\tilde \lambda = \left( \lambda_0, \ldots, \lambda_n \right)
\in \Rl^{n+1}$.  We shall call the function~$\phi_{n, h, p}$ a {\it
  polynomial integral momentum}. The function $\phi_{n, h, p}$ is
continuous since $h$ is.

The following routine fact shows that the polynomial integral momentum
is a generalization of the divided difference.

\begin{lemma}\label{ddpim}
For $f\in C^n$, $f^{[n]} = \phi_{n, f^{(n)}, 1}$.
\end{lemma}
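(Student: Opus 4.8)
The statement to prove is the identity $f^{[n]} = \phi_{n, f^{(n)}, 1}$ for $f \in C^n$, that is,
\begin{equation*}
  f^{[n]}(\lambda_0, \ldots, \lambda_n) = \int_{S_n} f^{(n)}\!\left( \sum_{j=0}^n s_j \lambda_j \right) d\sigma_n.
\end{equation*}
The plan is to proceed by induction on $n$, using the classical Hermite–Genocchi integral representation of divided differences as the organizing principle. For $n = 0$ the left side is $f$ and the right side is $\int_{S_0} f(s_0 \lambda_0)\,d\sigma_0$, where $S_0 = \{s_0 = 1\}$ is a single point with unit mass, so both sides equal $f(\lambda_0)$; this is the base case. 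For the inductive step, I would assume the formula holds for $n-1$ and all $C^{n-1}$ functions, then compute $f^{[n]}(\lambda_0, \lambda_1, \tilde\lambda)$ from the recursive definition of the divided difference.

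The main computation is as follows. Write $\tilde\lambda = (\lambda_2, \ldots, \lambda_n)$ and consider first the case $\lambda_0 \neq \lambda_1$. By the inductive hypothesis applied to $f' \in C^{n-1}$ (note $f^{[n-1]}$ of $f$ should be related to the $(n-1)$st divided difference; more precisely I would track that $\frac{d}{d\lambda_1} f^{[n-1]}(\lambda_1, \tilde\lambda)$ coincides with the Hermite–Genocchi integral of $f^{(n)}$ over $S_{n-1}$), one gets
\begin{equation*}
  f^{[n-1]}(\lambda_0, \tilde\lambda) - f^{[n-1]}(\lambda_1, \tilde\lambda)
  = \int_{S_{n-1}} \left[ f^{(n-1)}\!\Big( s_0 \lambda_0 + \textstyle\sum_{j=1}^{n-1} s_j \lambda_{j+1}\Big) - f^{(n-1)}\!\Big( s_0 \lambda_1 + \textstyle\sum_{j=1}^{n-1} s_j \lambda_{j+1}\Big) \right] d\sigma_{n-1},
\end{equation*}
where I relabel the simplex coordinates suitably. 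Writing the bracketed difference as $(\lambda_0 - \lambda_1) s_0 \int_0^1 f^{(n)}(\cdots)\,du$ via the fundamental theorem of calculus, dividing by $\lambda_0 - \lambda_1$, and then recognizing the resulting iterated integral over $S_{n-1} \times [0,1]$ (with the extra weight $s_0$) as an integral over $S_n$ after the change of variables that splits $s_0$ into two new barycentric coordinates — this is exactly the standard ``sub-simplex slicing'' that reduces $d\sigma_n$ to $d\sigma_{n-1}$ against a one-dimensional integral — yields the claimed formula for $\phi_{n, f^{(n)}, 1}$. The case $\lambda_0 = \lambda_1$ follows either by continuity of both sides in $\lambda_0$ (the divided difference is continuous on $\Rl^{n+1}$ for $f \in C^n$, as noted in the excerpt, and the integral representation is manifestly continuous since $f^{(n)}$ is) or by directly differentiating the integral representation in $\lambda_1$.

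The step I expect to require the most care is the change-of-variables bookkeeping: matching the iterated integral $\int_{S_{n-1}} \int_0^1 (\cdots)\, du\, d\sigma_{n-1}$, weighted by the barycentric coordinate $s_0$, against $\int_{S_n} (\cdots)\, d\sigma_n$, and in particular verifying that the Jacobian of the substitution $(s_0, u) \mapsto (s_0 u, s_0(1-u))$ produces precisely the factor $s_0$ that appeared from the fundamental theorem of calculus, so that no spurious constants survive. This is the routine but slightly fiddly heart of the Hermite–Genocchi formula; since \eqref{minv} already reduces $d\sigma_n$ to a Lebesgue integral over $R_n$, I would carry it out concretely in the coordinates of $R_n$ rather than abstractly on the simplex. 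Everything else — the base case, the appeal to continuity in the degenerate case, and the identification of $\frac{d}{d\lambda_1}f^{[n-1]}$ with a Hermite–Genocchi integral — is immediate from the inductive hypothesis and the definitions.
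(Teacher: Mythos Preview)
Your inductive argument is correct and is the standard self-contained proof of the Hermite--Genocchi formula; the change of variables $(s_0,u)\mapsto(s_0u,\,s_0(1-u))$ does have Jacobian $s_0$, which cancels the weight coming from the fundamental theorem of calculus, and the continuity argument for the coincidence case $\lambda_0=\lambda_1$ is sound. The paper, however, takes a different and shorter route: it performs a single global linear change of variables on $S_n$, namely $t_k=\sum_{j=0}^{n-k}s_j$ for $1\le k\le n$, which has unit Jacobian and converts the simplex integral directly into the nested iterated integral
\[
\int_0^1 dt_1\int_0^{t_1}dt_2\cdots\int_0^{t_{n-1}} f^{(n)}\bigl(\lambda_n+(\lambda_{n-1}-\lambda_n)t_1+\cdots+(\lambda_0-\lambda_1)t_n\bigr)\,dt_n,
\]
and then simply cites \cite[Formula (7.12)]{dVRLG1993} for the identification of this integral with $f^{[n]}(\lambda_0,\ldots,\lambda_n)$. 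So the paper offloads the induction to a reference, while your proof is longer but fully self-contained; either is perfectly acceptable for this routine lemma.
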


\begin{proof}
We have that
\begin{align*}
\phi_{n, f^{(n)}, 1}&=\int_{S_n} f^{(n)} \left( \sum_{j = 0}^n s_j \lambda_j
    \right)\,d\sigma_n\\&=\int_{S_n} f^{(n)} \bigg(\sum_{j=0}^n
      s_j\lambda_n+\sum_{j=0}^{n-1} s_j(\lambda_{n-1}-\lambda_n)
      +\dots+\\
&\quad\quad\quad\quad\quad+\sum_{j=0}^1 s_j(\lambda_1 - \lambda_2) + s_0(\lambda_0 -\lambda_1)\bigg)\,d\sigma_n.
\end{align*}
  By substituting $1=\sum_{j=0}^n s_j$, $t_1=\sum_{j=0}^{n-1}s_j$,
  \dots, $t_{n-1}=\sum_{j=0}^1 s_j$, and $t_n=s_0$ in the latter
  integral, we obtain \[ \int_0^1 \, dt_1 \int_0^{t_1} \, dt_2 \dots
  \int_0^{t_{n-1}}f^{(n)}(\lambda_n+(\lambda_{n-1}-\lambda_n)t_1
  +\dots+(\lambda_0-\lambda_1)t_n)\,dt_n,\] which equals
  $f^{[n]}(\lambda_0,\lambda_1,\ldots,\lambda_n)$ by
  \cite[Formula~(7.12)]{dVRLG1993}.
\end{proof}

\begin{lemma}
  \label{PhiMHprop}
  If\/~$h \in \W_0$ and $p\in P_n$, then~$\phi_{n, h, p} \in \cC_n$ and $$
  \left\| \phi_{n, h, p} \right\|_{\cC_n} \leq c_p\, \left\| h
  \right\|_{\W_0}. $$
\end{lemma}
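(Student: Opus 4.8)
The plan is to insert the Fourier inversion formula for $h$ into the definition \eqref{2stars} of the polynomial integral momentum and then to read off an explicit representation of the form \eqref{Crep}. Since $h\in\W_0$, write $h(x)=\int_\Rl\widehat h(\xi)\,e^{ix\xi}\,d\xi$ (the normalization constant, if any, is harmless and will be absorbed into the final constant), so that $\int_\Rl|\widehat h(\xi)|\,d\xi=\|h\|_{\W_0}$. Substituting $x=\sum_{j=0}^n s_j\lambda_j$ and factoring $e^{i\xi\sum_j s_j\lambda_j}=\prod_{j=0}^n e^{i\xi s_j\lambda_j}$, equation \eqref{2stars} becomes
$$\phi_{n,h,p}(\tilde\lambda)=\int_{S_n}\int_\Rl p(\tilde s)\,\widehat h(\xi)\,\prod_{j=0}^n e^{i\xi s_j\lambda_j}\,d\xi\,d\sigma_n(s).$$
The interchange of the order of integration is legitimate by Fubini's theorem: the integrand is dominated by $|p(\tilde s)|\,|\widehat h(\xi)|$, which is integrable over $S_n\times\Rl$ because $S_n$ is compact and $p$ continuous (so that $c_{n,p}:=\int_{S_n}|p(\tilde s)|\,d\sigma_n<\infty$), and $\widehat h\in L^1(\Rl)$.

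Next I would read off the $\cC_n$-representation. Put $\Omega:=S_n\times\Rl$, equip it with the complex measure $d\mu(s,\xi):=p(\tilde s)\,\widehat h(\xi)\,d\sigma_n(s)\,d\xi$, and set $a_j(\lambda,(s,\xi)):=e^{i\xi s_j\lambda}$ for $0\le j\le n$ (recall that $s_0=1-\sum_{j=1}^n s_j$ is determined by $\tilde s$). Then the displayed identity reads exactly $\phi_{n,h,p}=\int_\Omega\prod_{j=0}^n a_j(\lambda_j,(s,\xi))\,d\mu(s,\xi)$, each $a_j(\cdot,(s,\xi))$ is bounded and continuous with $\|a_j(\cdot,(s,\xi))\|_\infty=1$, and $(\Omega,\mu)$ is finite since $|\mu|(\Omega)=c_{n,p}\,\|h\|_{\W_0}<\infty$. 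It then remains to exhibit the exhausting sequence $\{\Omega_k\}$ required in the definition of $\cC_n$: take $\Omega_k:=S_n\times[-k,k]$, which is growing with $\bigcup_{k\ge1}\Omega_k=\Omega$. For $(s,\xi)\in\Omega_k$ one has $|\xi s_j|\le k$, so the family $\{a_j(\cdot,(s,\xi))\}_{(s,\xi)\in\Omega_k}$ is uniformly bounded by $1$ and uniformly Lipschitz, since $|e^{i\xi s_j\lambda}-e^{i\xi s_j\lambda'}|\le|\xi s_j|\,|\lambda-\lambda'|\le k\,|\lambda-\lambda'|$; in particular it is uniformly continuous.

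Therefore $\phi_{n,h,p}\in\cC_n$, and directly from the definition of the $\cC_n$-norm,
$$\|\phi_{n,h,p}\|_{\cC_n}\le\int_\Omega\prod_{j=0}^n\|a_j(\cdot,(s,\xi))\|_\infty\,d|\mu|(s,\xi)=|\mu|(\Omega)=c_{n,p}\,\|h\|_{\W_0},$$
which is the asserted estimate with $c_{n,p}=\int_{S_n}|p(\tilde s)|\,d\sigma_n$ (up to the Fourier normalization constant). There is no serious obstacle in this argument; the only points needing a little care are the Fubini justification and the verification of the uniform-boundedness-and-continuity clause of the definition of $\cC_n$ through the sets $\Omega_k$, both of which are routine.
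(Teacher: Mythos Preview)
Your proof is correct and follows essentially the same approach as the paper: insert the Fourier inversion formula for~$h$, factor the exponential, and read off the $\cC_n$-representation over the product measure space $S_n\times\Rl$ with the functions $a_j(\lambda,(s,\xi))=e^{i\xi s_j\lambda}$. You are in fact a bit more thorough than the paper, which does not spell out the exhausting sets $\Omega_k$ or the Fubini step; the only cosmetic difference is that the paper records the constant as $\frac{1}{n!}\sup_{\tilde s\in R_n}|p(\tilde s)|$ rather than your (sharper) $\int_{S_n}|p(\tilde s)|\,d\sigma_n$.
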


\begin{proof}[Proof of Lemma~\ref{PhiMHprop}]
Since $h\in W_0$, we have $g := \hat h \in L^1(\Rl)$, i.e., $$ h (t) = \int_\Rl
  g(s) e^{ist}\, ds,\ \ g \in L^1(\Rl).$$
  Observing that $$ h\left( \sum_{j = 0}^n s_j \lambda_j
  \right) = \int_{\Rl} \prod_{j = 0}^n e^{i s s_j \lambda_j}\, g(s)\,
  ds , $$ implies $$ \phi_{n, h, p} \left( \tilde \lambda \right) =
  \int_{\Rl} \int_{S_n} \prod_{j = 0}^n e^{is s_j \lambda_j}\,
  p(\tilde s)\, g(s) \, d \sigma_n\, ds. $$ Thus, choosing the finite
  measure space~$\left( \Rl \times S_n, g(s)\, ds \times p(\tilde s)\,
    d\sigma_n \right)$ and functions $$ a_j (s, \tilde s, t) = e^{i s
    s_j t},\ \ 0 \leq j \leq n, $$ in~(\ref{Crep}), we obtain
  that~$\phi_{n, h, p} \in \cC_n$ and $$ \left\| \phi_{n, h, p}
  \right\|_{\cC_n} \leq \frac{1}{n!} \, \left\| h \right\|_{\W_0}\,
  \sup_{\tilde s \in R_n} \left| p\left( \tilde s \right) \right|,$$ completing the proof.
\end{proof}

It follows immediately from Lemmas~\ref{ClassNewDef} and~\ref{PhiMHprop} that the
multilinear transformation~$T_{\phi_{n, h, p}}$ associated with the
function~$\phi_{n, h, p}$ is bounded on every~$L^{\alpha_1} \times
\ldots \times L^{\alpha_n}$, with $0 \leq \frac 1{\alpha_1} + \ldots + \frac 1{\alpha_n} \leq 1$, provided~$h \in \W_0$. The principal step toward Theorem \ref{MainTheoremCor} is the following improvement of the observation above.

\begin{theorem}
  \label{MainTheoremExt}
  Let~$p \in \Pl_n$ and $h \in C_b$. Let $1 < \alpha_j < \infty$, for
  $1 \leq j \leq n$, be such that $ 0 < \frac 1 {\alpha_1} + \ldots +
  \frac 1 {\alpha_n} < 1$. Then the operator~$T_{\phi_{n, h, p}}$
  exists and is bounded on every~$L^{\alpha_1} \times \ldots \times
  L^{\alpha_n}$ and $$ \left\| T_{\phi_{n, h, p}} \right\| \leq \,
  c_p\, \left\| h \right\|_\infty, $$ where the constant~$c_p > 0$ depends only on the exponents~$\alpha_j$, $j = 1, 2,
  \ldots, n$ and the polynomial~$p$.
\end{theorem}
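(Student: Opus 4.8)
The plan is to prove Theorem~\ref{MainTheoremExt} by induction on $n$, following the scheme outlined at the end of Section~\ref{sec:doi-case}. For the base case $n=1$, the function $\phi_{1,h,p}(\lambda_0,\lambda_1)=\int_0^1 p(s)\, h(s\lambda_0+(1-s)\lambda_1)\,ds$ is essentially a weighted divided difference, and one argues as in Theorem~\ref{PSLipschitzAlt}: using Lemma~\ref{CompactSupportTphi} we may assume the spectrum of $H$ is atomic, and then the key lemma is an analogue of Lemma~\ref{ImprovedDOI-base-rec} giving a self-improving estimate $\|T_{\phi_{1,h,p}}\|_\alpha \le c_\alpha(1+\|T_{\phi_{1,h,p}}\|_\beta)$ whenever $\tfrac12=\tfrac1\alpha+\tfrac1\beta$. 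The proof of that estimate is the same Hölder-factorization argument: factor the test element, expand the diagonal sum $\sum_{l\le k\le m}$, and use a decomposition of the divided-difference-type quotient $\tfrac{l-k}{l-m}$ (Lemma~\ref{DecompositionLemma}) together with the boundedness of the shift maps $R_s$ (Lemma~\ref{MarcCorl}). The extra factor $p(s)$ only contributes a constant $\sup_{R_n}|p|$. Complex interpolation then closes the recursion exactly as in the proof of Theorem~\ref{PSLipschitzAlt}, yielding $\|T_{\phi_{1,h,p}}\|_\alpha\le c_{1,p}\|h\|_\infty$ for $\alpha$ large, hence for all $1<\alpha<\infty$ by interpolation and duality.

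For the inductive step I would establish a decomposition lemma (the promised Lemma~\ref{DecomLemmaII}) generalizing \eqref{SchemeReduction}: writing $\sum_{j=0}^n s_j\lambda_j = \lambda_k + \sum_{j}s_j(\ldots)$ and splitting the simplex $S_n$ along the hyperplane through a chosen intermediate variable, one expresses $\phi_{n,h,p}(\lambda_0,\dots,\lambda_n)$ as a finite sum of terms of the form
\begin{equation*}
  \frac{\lambda_{i}-\lambda_{k}}{\lambda_{i}-\lambda_{j}}\;\phi_{n-1,h,q}(\lambda_0,\dots,\widehat{\lambda_k},\dots,\lambda_n)
\end{equation*}
with $q\in\Pl_{n-1}$ depending on $p$. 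Assuming (as permitted by Lemma~\ref{CompactSupportTphi}) that $H$ has atomic spectrum so that all divided-difference quotients are of the form $\tfrac{l-k}{l-m}$ with integer arguments, apply Lemma~\ref{DecompositionLemma} to represent $\tfrac{l-k}{l-m}=\int_\Rl g(s)(k-l)^{is}(m-l)^{-is}\,ds$. By Lemma~\ref{MOI-algebra}\,(\ref{MOI-A-composition}) (and (\ref{MOI-A-product}) when the removed index is an endpoint), the operator attached to each such term is $R_{-s}$ composed with $T_{\phi_{n-1,h,q}}$ composed with $R_s$ acting in the appropriate slot; integrating against $g(s)$ and invoking the polynomial growth of $\|R_s\|$ from Lemma~\ref{MarcCorl} together with \eqref{Gfunction}, one concludes
\begin{equation*}
  \|T_{\phi_{n,h,p}}\| \le c_{n,p}\,\sup_{\text{slots}} \|T_{\phi_{n-1,h,q}}\|,
\end{equation*}
where the supremum is over the finitely many $q$ arising in the decomposition and over the relevant tuples of exponents — and here one uses the inductive hypothesis, which applies because the exponents $\alpha_j$ with $0<\sum 1/\alpha_j<1$ remain in the admissible open range after the reduction. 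One must also handle the exponent bookkeeping: the composition in (\ref{MOI-A-composition}) merges two slots into one with reciprocal exponent $1/\alpha=1/\alpha_1+\cdots+1/\alpha_k$, so the $(n-1)$-linear integral is evaluated at a tuple still satisfying the hypothesis of the theorem.

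The main obstacle I anticipate is twofold. First, the bookkeeping in the decomposition lemma: keeping track of which intermediate variable is eliminated, verifying that the residual polynomial $q$ genuinely lies in $\Pl_{n-1}$ with a controlled sup-norm on $R_{n-1}$, and ensuring the $R_s$-conjugation lands in exactly the slot where Lemma~\ref{MOI-algebra} can be applied — the endpoint cases ($k=0$ or $k=n$) behave differently from interior ones and need (\ref{MOI-A-product}) rather than (\ref{MOI-A-composition}). Second, the existence (not merely boundedness) of $T_{\phi_{n,h,p}}$ for general $h\in C_b$: for $h\in\W_0$ existence is free from Lemmas~\ref{ClassNewDef} and~\ref{PhiMHprop}, so the natural route is to prove the norm estimate first for $h\in\W_0$ (where $T_{\phi_{n,h,p}}$ is a genuine Bochner integral and all the algebraic manipulations with $S_{\phi,m}$ are legitimate), and then extend to $h\in C_b$ by a density/approximation argument: approximate $h$ locally uniformly by $h_N\in\W_0$ with $\|h_N\|_\infty$ bounded, note that on the atomic model $S_{\phi_{n,h,p},m}$ is a finite sum depending only on finitely many values of $h$, and pass to the limit using Lemma~\ref{CompactSupportTphi} to reduce to compactly supported $\phi$ and uniform boundedness. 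This last step is where one must be careful that strong convergence of the $S_{\phi,m}$ survives the approximation.
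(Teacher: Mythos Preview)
Your overall strategy matches the paper's: reduce to atomic spectrum via Lemma~\ref{ToDiscreteMeasure} (which also handles your existence concern for general $h\in C_b$ exactly along the lines you sketch), prove the base case $n=1$ by a self-improving estimate and interpolation, and reduce $n$ to $n-1$ via a decomposition lemma combined with Lemmas~\ref{DecompositionLemma} and~\ref{MarcCorl} through the algebra of Lemma~\ref{MOI-algebra}.

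There is, however, a genuine oversimplification in your decomposition formula. You write the reduction as a finite sum of terms of the form
\[
\frac{\lambda_i-\lambda_k}{\lambda_i-\lambda_j}\,\phi_{n-1,h,q}(\ldots),
\]
i.e., a single first-power ratio times a lower-order momentum. This is not what one gets. Already in the base case $n=1$ with $p(t)=t^{m-1}$, the decomposition analogous to~\eqref{PhiMrepTriv} involves the ratios raised to powers up to~$m$ (this is the content of Lemma~\ref{PhiMrep}); the ``extra factor $p(s)$ only contributes a constant'' is too quick, because the weight genuinely changes the algebraic identity one needs. Likewise in the inductive step, Lemma~\ref{DecomLemmaII} produces functions $\psi_{n-1,h,q}$ and $\psi_{n-1,h,r}$ where $q,r$ are polynomials in an auxiliary variable $\zeta=\frac{\lambda_0-\lambda_2}{\lambda_0-\lambda_1}$, so after expanding one faces factors $(1-\zeta)^m$ rather than $(1-\zeta)$. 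The good news is that Lemma~\ref{DecompositionLemma} still handles $(\lambda/\mu)^m$ (substitute $\lambda\mapsto\lambda^m$, $\mu\mapsto\mu^m$, equivalently rescale $s\mapsto ms$), and Lemma~\ref{MarcCorl} absorbs the resulting $|ms|$ growth since $\int |s|^N|g(s)|\,ds<\infty$ for all~$N$. So the scheme survives, but the decomposition lemmas you need to state and prove are Lemmas~\ref{PhiMrep} and~\ref{DecomLemmaII}/\ref{IntegralRel}, not the first-power versions you wrote.

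A second point you gloss over is the ordering of the $\lambda_j$. Lemma~\ref{DecompositionLemma} requires $0<\lambda\le\mu$, so before applying the decomposition you must partition $\Z^{n+1}$ according to the relative order of consecutive indices; the paper does this by splitting into $2^{n+1}$ pieces $K_{\epsilon,i}$, locating a local maximum $l_{j_\epsilon}$ in each, and then using parts~(\ref{MOI-A-involution}) and~(\ref{MOI-A-duality}) of Lemma~\ref{MOI-algebra} to cyclically relabel so that $l_0\le l_2<l_1$. Your remark about endpoint versus interior slots touches on this, but the actual mechanism is a cyclic shift under the trace rather than a choice between (\ref{MOI-A-product}) and (\ref{MOI-A-composition}).
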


\begin{remark}\label{MTER}
  Theorem~\ref{MainTheoremExt} and Lemma \ref{ddpim} imply
  that, for every~$f \in C^n$, $T_{f^{[n]}}$ is bounded
  on~$L^{\alpha_1} \times \ldots \times L^{\alpha_n}$, with~$0 < \frac
  1{\alpha_1} + \ldots + \frac 1{\alpha_n} < 1$ and $$
  \left\|T_{f^{[n]}}\right\|\leq \,c_{n}\,\left\| f^{(n)}
  \right\|_\infty. $$
\end{remark}

\subsection*{Theorem~\ref{MainTheoremExt} and discrete spectral measures.}

Before proceeding to the principal part of the proof of
Theorem~\ref{MainTheoremExt}, let us observe that essentially it needs
only to be proved in the case when the spectrum of the operator~$H$ is
concentrated at integral points. As soon as that is done, the rest of
the proof is a straightforward approximation. This observation is
formalized in the following lemma.

\begin{lemma}
  \label{ToDiscreteMeasure}
  Let~$n \in \N$, $p \in \Pl_n$, and let~$\left\{E_l \right\}_{l\in
    \Z}$ be a sequence of pairwise orthogonal spectral projections
  such that~$\sum_{l \in \Z} E_l = 1$.  Let~$1 < \alpha,\alpha_j <
  \infty$, $x \in L^{\alpha_j}$, for $1 \leq j \leq n$, and $0 <
  \frac1\alpha=\frac 1{\alpha_1} + \ldots + \frac 1{\alpha_n} <
  1$. If, for every~$h \in C_b$, the series
  \begin{equation}
    \label{ToDiscreteMeasureSeriesI}
    S_{h} (\tilde x): = \sum_{l_0, \ldots, l_n \in \Z} \phi_{n, h, p}
    \left(l_0, l_1, \ldots, l_n \right)\, E_{l_0} x_1 E_{l_1} x_2
    \cdot\ldots \cdot x_n E_{l_n},
  \end{equation}
  where~$\tilde x = \left( x_1, \ldots, x_n \right)$, converges
  in~$L^\alpha$, the mapping $$ \tilde x \mapsto S_h (\tilde x) $$ is
  bounded on $L^{\alpha_1} \times \ldots\times L^{\alpha_n}$
  independently of the sequence $\left\{E_l \right\}_{l\in \Z}$ with
  the norm $$\left\| S_h \right\| \leq c_p\, \left\| h
  \right\|_\infty, $$ then the claim of Theorem~\ref{MainTheoremExt}
  holds.
\end{lemma}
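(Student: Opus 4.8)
The plan is to deduce Theorem~\ref{MainTheoremExt}, for an arbitrary self-adjoint $H$ affiliated with $M$, from the discrete case assumed in the hypothesis, by exploiting that the $m$-th approximant $S_{\phi_{n,h,p},m}$ from Definition~\ref{nmoi} is itself assembled from the \emph{discrete} family $\{E_{l,m}\}_{l\in\Z}$, $E_{l,m}=E[\frac lm,\frac{l+1}m)$, which is a sequence of pairwise orthogonal projections summing to $1$. The one computation I would record first is the scaling identity
\[
\phi_{n,h,p}\Bigl(\frac{l_0}m,\ldots,\frac{l_n}m\Bigr)=\phi_{n,h(\cdot/m),p}(l_0,\ldots,l_n),\qquad l_0,\ldots,l_n\in\Z,
\]
which is immediate from \eqref{2stars}. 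It shows that $S_{\phi_{n,h,p},m}(\tilde x)$ coincides with the series $S_{h(\cdot/m)}(\tilde x)$ of \eqref{ToDiscreteMeasureSeriesI} formed from the family $\{E_{l,m}\}_l$ and the function $h(\cdot/m)\in C_b$. Since $\|h(\cdot/m)\|_\infty=\|h\|_\infty$, applying the hypothesis for each $m$ (to the family $\{E_{l,m}\}_l$ and that function) makes $S_{\phi_{n,h,p},m}$ a well-defined bounded polylinear map on $L^{\alpha_1}\times\cdots\times L^{\alpha_n}$ with $\|S_{\phi_{n,h,p},m}\|\leq c_{n,p}\|h\|_\infty$ for \emph{every} $m$. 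By Definition~\ref{nmoi} it then remains only to show that the uniformly bounded sequence $\{S_{\phi_{n,h,p},m}\}_m$ converges strongly; its strong limit is then the multiple operator integral $T_{\phi_{n,h,p}}$, and the uniform bound passes to the limit, giving $\|T_{\phi_{n,h,p}}\|\leq c_{n,p}\|h\|_\infty$ --- the assertion of Theorem~\ref{MainTheoremExt}.

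For the strong convergence I would, using the uniform bound and a routine $3\epsilon$-argument for polylinear maps, reduce to checking that $\{S_{\phi_{n,h,p},m}(\tilde x)\}_m$ converges in $L^\alpha$ for $\tilde x$ in the norm-dense subspace $\bigcup_{k\geq1}L^{\alpha_1}_k\times\cdots\times L^{\alpha_n}_k$, where $L^\alpha_k:=F_kL^\alpha F_k$ and $F_k:=E([-k,k])$, exactly as in the proof of Lemma~\ref{CompactSupportTphi}. Fix such an $\tilde x=(x_1,\ldots,x_n)$ with $x_j\in L^{\alpha_j}_k$. Since only the projections $E_{l_j,m}$ meeting $[-k,k]$ contribute to $S_{\phi_{n,h,p},m}(\tilde x)$, and since $\sum_{j=0}^n s_j=1$ on $S_n$, only the restriction of $h$ to a fixed compact interval (depending on $k$) enters; hence, choosing $\chi\in C^\infty_c(\Rl)$ equal to $1$ on that interval, I get $S_{\phi_{n,h,p},m}(\tilde x)=S_{\phi_{n,h\chi,p},m}(\tilde x)$ for all $m$. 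Now $h\chi\in C_c(\Rl)$ is uniformly continuous, so its mollifications $g^{(j)}:=(h\chi)*\rho_j$ (with $\rho_j$ a nonnegative smooth approximate identity) lie in $C^\infty_c(\Rl)\subseteq C^1_c(\Rl)\subseteq\W_0$ and satisfy $\|h\chi-g^{(j)}\|_\infty\to0$ as $j\to\infty$.

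The last step, which I expect to be the only real obstacle, is to prove that $\{S_{\phi_{n,h\chi,p},m}(\tilde x)\}_m$ is Cauchy in $L^\alpha$. Using linearity of $h\mapsto\phi_{n,h,p}$ and of $\phi\mapsto S_{\phi,m}$, the scaling identity again, and the hypothesis applied to the function $(h\chi-g^{(j)})(\cdot/m)\in C_b$, one bounds
\[
\bigl\|S_{\phi_{n,h\chi,p},m}(\tilde x)-S_{\phi_{n,g^{(j)},p},m}(\tilde x)\bigr\|_\alpha=\bigl\|S_{\phi_{n,h\chi-g^{(j)},p},m}(\tilde x)\bigr\|_\alpha\leq c_{n,p}\,\|h\chi-g^{(j)}\|_\infty\prod_{i=1}^n\|x_i\|_{\alpha_i},
\]
which is small for large $j$, \emph{uniformly in $m$}; meanwhile, for each fixed $j$, $g^{(j)}\in\W_0$ gives $\phi_{n,g^{(j)},p}\in\cC_n$ by Lemma~\ref{PhiMHprop}, so Lemma~\ref{ClassNewDef} shows that $S_{\phi_{n,g^{(j)},p},m}(\tilde x)$ converges in $L^\alpha$ as $m\to\infty$ and is in particular Cauchy in $m$. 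Combining these two facts in the usual way yields the Cauchy property of $\{S_{\phi_{n,h,p},m}(\tilde x)\}_m$, which completes the argument. The conceptual heart is the scaling identity, which reduces everything to the discrete family $\{E_{l,m}\}$; the delicate point is purely the passage $m\to\infty$ for $h\in C_b$ not lying in $\W_0$, circumvented by localizing to collections $\tilde x$ with compact spectral support (so that only $h$ restricted to a compact set is seen) and mollifying into $\W_0\cap C_c$, where Lemma~\ref{ClassNewDef} applies.
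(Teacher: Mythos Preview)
Your proposal is correct and follows essentially the same route as the paper: the scaling identity $\phi_{n,h,p}(l_0/m,\ldots,l_n/m)=\phi_{n,h(\cdot/m),p}(l_0,\ldots,l_n)$ to obtain the uniform bound $\|S_{\phi_{n,h,p},m}\|\leq c_{n,p}\|h\|_\infty$, followed by a $3\epsilon$ Cauchy argument in which $h$ is approximated in sup-norm by a function in $\W_0$ so that Lemmas~\ref{PhiMHprop} and~\ref{ClassNewDef} furnish convergence of the approximating sequence. The only cosmetic difference is that the paper invokes Lemma~\ref{CompactSupportTphi} up front to reduce to compactly supported $\phi_{n,h,p}$ (hence effectively $h\in C_c$), whereas you localize $\tilde x$ to $\bigcup_k L^{\alpha_1}_k\times\cdots\times L^{\alpha_n}_k$ and cut off $h$ by hand; these are the same reduction.
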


\begin{proof}[Proof of Lemma~\ref{ToDiscreteMeasure}]
  We observe that, by Lemma~\ref{CompactSupportTphi}, it is enough to
  consider $\phi_{n,h,p}$ only on compact subsets of $\Rl^{n+1}$ and,
  therefore the sum in~(\ref{ToDiscreteMeasureSeriesI}) is finite.

  Let~$h \in C_c$. Fix~$n \in \N$, $p \in \Pl_n$.  Observe that the
  assumption of the lemma implies that the series
  \begin{equation}
    \label{ToDiscreteMeasureSeriesII}
    y_m : = \sum_{l_0,\ldots, l_n \in \Z} \phi_{n, h, p} \left( \frac
      {l_0}m, \ldots,
      \frac {l_n}m \right)\, E_{l_0, m} x_1 E_{l_1, m} x_2 \cdot
    \ldots\cdot x_n E_{l_n, m}
  \end{equation}
  converges in~$L^\alpha$ and the mappings $$ S_{h, m} : \tilde x
  \mapsto y_m $$ from Definition~\ref{nmoi} are uniformly bounded
  multilinear operators on $L^{\alpha_1}\times\ldots\times
  L^{\alpha_n}$, with
  \begin{equation}
    \label{ToDiscreteMeasureSmEst}
    \left\| S_{h, m} \right\| \leq \, c_p \,\left\| h \right\|_\infty.
  \end{equation}
  Indeed, in order to see that~(\ref{ToDiscreteMeasureSeriesII}) fits
  to~(\ref{ToDiscreteMeasureSeriesI}) we just need to take
  $E_l=E_{l,m}$, $l\in\Z$, and replace $h$
  in~(\ref{ToDiscreteMeasureSeriesI}) with the function
  $h_1(t)=h\left( \frac t m \right)$, $t\in\Rl$.

  Thus, we see that to finish the proof of the lemma, we have only to
  show that for every fixed collection~$\{x_j\}_{j=0}^n \subset
  L^{\alpha_j}$, the sequence~$\left\{y_m \right\}_{m \geq 1}$
  converges in~$L^\alpha$.

  We shall show that the sequence~$\left\{ y_m \right\}_{m \geq 1}$ is
  Cauchy.  Fix~$\epsilon > 0$ and fix a $C_c^\infty$-function~$\tilde
  h$ such that
  \begin{equation}
    \label{ToDiscreteApprox}
    \left\| h - \tilde h\right\|_\infty \leq \frac
    \epsilon{c_p},
  \end{equation}
  where~$c_p$ is taken from~(\ref{ToDiscreteMeasureSmEst}).
  Let~$\tilde y_m := S_{\tilde h, m} (\tilde x)$.  By
  Lemma~\ref{PhiMHprop}, $\phi_{n,\tilde h, p}\in\cC_n$, and, hence,
  Definition \ref{nmoi} and Lemma~\ref{ClassNewDef} imply that the
  sequence~$\left\{\tilde y_m\right\}_{m \geq 1}$ is Cauchy.  That is,
  there is~$m_\epsilon \in \N$ such that
  \begin{equation}
    \label{ToDiscreteMeasurePreCauchy}
    \left\|\tilde y_m-\tilde y_{m'}\right\|_\alpha \leq \epsilon,\quad
    \text{for } m, m'
    > m_\epsilon.
  \end{equation}
  Since $$ y_m - \tilde y_m = S_{h - \tilde h, m} (\tilde x), $$
  by~(\ref{ToDiscreteMeasureSmEst}) and~(\ref{ToDiscreteApprox}) $$
  \left\| y_m - \tilde y_m \right\|_\alpha \leq \epsilon,\quad m \geq
  1. $$ Combining the latter with~(\ref{ToDiscreteMeasurePreCauchy})
  implies that for every~$\epsilon > 0$, there is~$m_\epsilon \in \N$
  such that for all $m, m' > m_\epsilon$,
  $$
  \left\| y_m - y_{m'} \right\|_\alpha \leq \left\| y_m - \tilde y_m
  \right\|_\alpha + \left\| \tilde y_m - \tilde y_{m'} \right\|_\alpha
  + \left\| \tilde y_{m'} - y_{m'} \right\|_\alpha \leq 3 \epsilon.
  $$
  The lemma is proved.
\end{proof}

Now we shall prove Theorem~\ref{MainTheoremExt} following the scheme
outlined in Section \ref{sec:doi-case}.

By Lemma~\ref{ToDiscreteMeasure}, without loss of generality, we
assume that~$H_t$ has pure integral point spectrum, i.e.,
if~$dE_\lambda$ is the spectral measure of~$H$, then $$ E(B) = \sum_{l
  \in B\cap\Z} E_l,\ \ \text{$B \subseteq \Rl$ is Borel,} $$ where~$E
= \left\{E_l \right\}_{l \in \Z}$ is a spectral family. The
operator~$T_{\phi_{n, h, p}}$ associated with $H$ is given by the
finite multiple sum
\begin{equation}
\label{f1}
T_{\phi_{n, h, p}} (x_1, \ldots,x_n) = \sum_{l_0, \ldots, l_n \in \Z}
\phi_{n, h, p} \left( l_0,
  \ldots, l_n \right)\, E_{l_0} x_1 E_{l_1} x_2 \cdot \ldots \cdot x_n
E_{l_n}.
\end{equation}
By Lemma \ref{CompactSupportTphi}, it is enough to consider the
polynomial integral momenta $\phi_{n,h,p}$ only on compact
sets. Therefore, $T_{\phi_{n,h,p}}$ is given by the finite sum in the
proofs below.

\subsection*{The base of induction.}
\label{sec:ImprovedDOI}

Note that upon integrating by parts, the function
$\phi_2(\lambda,\mu)$ in~(\ref{SchemeReduction}) can be expressed as
\begin{align*}
  \phi_2(\lambda,\mu)&=\frac{f^{[1]}(\lambda,\mu)-f'(\mu)}{\lambda -\mu}\\
  &=\frac{1}{\lambda-\mu}\left(tf'(\lambda+(\mu-\lambda)t)\big|_0^1-f'(\mu)\right)+\int_0^1
  tf''(\lambda+(\mu-\lambda)t)\,dt\\
  &=\int_0^1 tf''(\lambda+(\mu-\lambda)t)\,dt.
  \end{align*}
The function $\phi_2(\lambda,\mu)$ is a particular case of the more general function
\begin{equation}
\label{star}
  \phi_{m,h}(\lambda, \mu) = \int_0^1 t^{m-1} h(\lambda + (\mu -
  \lambda) t)\, dt,
\end{equation}
where $h \in C_b$ and $m\in\N$. Note that $\phi_{m,h}$, in its turn,
is the special case of~$\phi_{n, h, p}$ given by~(\ref{2stars})
with~$n = 1$ and~$p(t) = t^{m-1}$.  In particular, if~$m = 1$,
then~$\phi_{1,f'} = f^{[1]}$ (see, e.g., Lemma \ref{ddpim}).

The following theorem strengthens Theorem~\ref{PSLipschitzAlt}.

\begin{theorem}
  \label{ImprovedDOI}
  Let $h \in C_b$, $m\in \N$, and let $\phi_{m,h}$ be as in~(\ref{star}). Then the operator~$T_{\phi_{m,h}}$ is bounded on every~$L^\alpha$, $1<\alpha<\infty$, and
  $$ \left\| T_{\phi_{m,h}} \right\|_\alpha \leq \,c_{\alpha, m} \, \left\| h \right\|_\infty.$$
\end{theorem}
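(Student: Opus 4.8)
The plan is to follow the proof of Theorem~\ref{PSLipschitzAlt} almost verbatim, with the elementary splitting~\eqref{PhiMrepTriv} of the divided difference replaced by a ``cocycle'' identity for the functions $\phi_{m,h}$. Splitting the integral in~\eqref{star} at the point $t_0=\frac{\nu-\lambda}{\mu-\lambda}$ (for real numbers $\lambda\le\nu\le\mu$) and expanding $(t_0+(1-t_0)v)^{m-1}$ by the binomial theorem gives
\begin{multline*}
\phi_{m,h}(\lambda,\mu)=\left(\tfrac{\nu-\lambda}{\mu-\lambda}\right)^{m}\phi_{m,h}(\lambda,\nu)\\
+\sum_{j=1}^{m}\binom{m-1}{j-1}\left(\tfrac{\nu-\lambda}{\mu-\lambda}\right)^{m-j}\left(\tfrac{\mu-\nu}{\mu-\lambda}\right)^{j}\phi_{j,h}(\nu,\mu),
\end{multline*}
which for $m=1$ is exactly~\eqref{PhiMrepTriv}. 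Since $\phi_{m,h}$ is linear in $h$, it is enough to treat the case $\|h\|_\infty\le 1$ and, splitting $h$ into its real and imaginary parts, real $h$. By Lemmas~\ref{ToDiscreteMeasure} and~\ref{CompactSupportTphi} I may assume that $H$ has integer spectrum $\{E_l\}_{l\in\Z}$ and that $\phi_{m,h}$ is compactly supported, so that $T_{\phi_{m,h}}$ and all its relatives below are finite sums, hence automatically bounded on every $L^\alpha$; also $x\in L^\alpha$ may be taken off-diagonal, the diagonal part of $T_{\phi_{m,h}}$ being harmless since $\phi_{m,h}(\lambda,\lambda)=\tfrac1m h(\lambda)$. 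Writing $\psi_{m,h}(\lambda_0,\lambda_1):=\phi_{m,h}(\lambda_1,\lambda_0)$, Lemma~\ref{MOI-algebra}~(\ref{MOI-A-involution}) gives $\|T_{\psi_{m,h}}\|_\alpha=\|T_{\phi_{m,h}}\|_\alpha$ for real $h$, and $\|\psi_{j,h}\|_\infty\le\tfrac1j\|h\|_\infty$ for every $j$.

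The heart of the matter is the recursive estimate (the analogue of Lemma~\ref{ImprovedDOI-base-rec}): for $2<\alpha,\beta<\infty$ with $\tfrac12=\tfrac1\alpha+\tfrac1\beta$,
\[\|T_{\phi_{m,h}}\|_\alpha\le c_{\alpha,m}\,(1+\|T_{\phi_{m,h}}\|_\beta).\]
To prove it I would fix $x\in L^\alpha$ off-diagonal upper-triangular and $y\in L^{\alpha'}$ lower-triangular, both of norm $1$ (triangular truncation is bounded on $L^\alpha$, \cite{DDPS}), and factor $y=ab$ with $a\in L^2$, $b\in L^\beta$ lower-triangular and $\|a\|_2\|b\|_\beta\le 1+\epsilon$ (\cite[Theorem~8.3]{PiXu2003}). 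Expanding $\tau(yT_{\phi_{m,h}}(x))=\sum_{l\le k\le m'}\phi_{m,h}(l,m')\,\tau(a_{m'k}\,b_{kl}\,x_{lm'})$ and substituting the cocycle identity with $\nu=k$, the first term becomes $S_1=\sum_{l\le k\le m'}(\tfrac{k-l}{m'-l})^{m}\tau(a_{m'k}(T_{\psi_{m,h}}(b))_{kl}x_{lm'})$, and each remaining term is $S_2^{(j)}=\sum_{l\le k\le m'}(\tfrac{k-l}{m'-l})^{m-j}(\tfrac{m'-k}{m'-l})^{j}\tau((T_{\psi_{j,h}}(a))_{m'k}b_{kl}x_{lm'})$, $1\le j\le m$. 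Since the ratios $\tfrac{k-l}{m'-l}$ and $\tfrac{m'-k}{m'-l}$ lie in $(0,1]$, iterating Lemma~\ref{DecompositionLemma} writes each scalar weight as an integral against convolution powers $g_{m-j},g_j$ of the function $g$ from Lemma~\ref{DecompositionLemma} (which still have all moments finite); then Lemma~\ref{MOI-algebra}~(\ref{MOI-A-product}) and the rotations $R_s$ of Lemma~\ref{MarcCorl} recast $S_1=\int g_m(s)\,\tau(a\,(T_{\psi_{m,h}}(b))_s\,x_{-s})\,ds$ and each $S_2^{(j)}$ as an analogous (double) integral. Now I estimate as in Lemma~\ref{ImprovedDOI-base-rec}, via H\"older and Lemma~\ref{MarcCorl}: in $S_1$ the distinguished factor is $T_{\psi_{m,h}}(b)$, and keeping $\|T_{\psi_{m,h}}(b)\|_\beta\le\|T_{\psi_{m,h}}\|_\beta\|b\|_\beta$ yields $|S_1|\le c_{\alpha,m}\|T_{\psi_{m,h}}\|_\beta(1+\epsilon)=c_{\alpha,m}\|T_{\phi_{m,h}}\|_\beta(1+\epsilon)$; in every $S_2^{(j)}$ the distinguished factor $T_{\psi_{j,h}}(a)$ sits on the Hilbert--Schmidt side, so the elementary bound $\|T_{\psi_{j,h}}(a)\|_2\le\|\psi_{j,h}\|_\infty\|a\|_2\le\tfrac1j\|a\|_2$ from Remark~\ref{phiHS} gives $|S_2^{(j)}|\le c_{\alpha,m}(1+\epsilon)$. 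Summing over $j$ and letting $\epsilon\to0$ proves the recursive estimate.

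To conclude I argue exactly as in the proof of Theorem~\ref{PSLipschitzAlt}. For $\alpha$ large, complex interpolation between $L^2$ and $L^\alpha$, together with $\|T_{\phi_{m,h}}\|_2\le\|\phi_{m,h}\|_\infty\le\tfrac1m\le1$ (Remark~\ref{phiHS}), gives $\|T_{\phi_{m,h}}\|_\beta\le\|T_{\phi_{m,h}}\|_\alpha^{\theta}$ with $\theta=\tfrac{2}{\alpha-2}\to0$; combined with the recursive estimate and $\theta<1$ this forces $\|T_{\phi_{m,h}}\|_\alpha\le c_{\alpha,m}$, after which complex interpolation and duality (the adjoint of $T_{\phi_{m,h}}$ is $T_{\psi_{m,h}}$ by Lemma~\ref{MOI-algebra}~(\ref{MOI-A-duality}), of the same norm) cover all $1<\alpha<\infty$; undoing the normalization gives $\|T_{\phi_{m,h}}\|_\alpha\le c_{\alpha,m}\|h\|_\infty$. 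I expect the only genuine work to be bookkeeping: recording the cocycle identity and organizing the several geometric weights $(\tfrac{k-l}{m'-l})^{m-j}(\tfrac{m'-k}{m'-l})^{j}$ into bounded compositions of the rotations $R_s$, while checking that the iterated convolutions of $g$ absorb the polynomial growth produced by Lemma~\ref{MarcCorl}. Note that no induction on $m$ is required: the lower-order functions $\phi_{j,h}$, $j<m$, enter only through the terms $S_2^{(j)}$, which are attached to the Hilbert--Schmidt factor $a$, so the trivial $L^2$ bound of Remark~\ref{phiHS} suffices for them.
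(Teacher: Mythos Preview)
Your proposal is correct and follows essentially the same route as the paper: the ``cocycle identity'' you derive is exactly Lemma~\ref{PhiMrep}, and your recursive estimate plus extrapolation mirrors the paper's proof of Theorem~\ref{ImprovedDOI}. The only cosmetic difference is that you realize the weights $\zeta^{m-j}\omega^j$ via convolution powers of $g$ (hence double integrals), whereas the paper observes that $\zeta^{m-j}\omega^j=A/B$ with $A=(\xi-\lambda)^{m-j}(\mu-\xi)^j\le B=(\mu-\lambda)^m$ and applies Lemma~\ref{DecompositionLemma} once, writing everything as a single integral with rotations $R_{ms}$, $R_{(m-k)s}$, $R_{ks}$; either bookkeeping works.
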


The special case~$m = 1$ of Theorem~\ref{ImprovedDOI} is equivalent to Theorem~\ref{PSLipschitzAlt} with $h=f'$. For $m>1$, the proof of Theorem~\ref{ImprovedDOI} repeats the one of Theorem~\ref{PSLipschitzAlt}, with technical modifications. Recall that the key step in the proof of Theorem~\ref{PSLipschitzAlt} is the decomposition~(\ref{PhiMrepTriv}).  For Theorem~\ref{ImprovedDOI}, we shall need the following extension of~(\ref{PhiMrepTriv}).

\begin{lemma}
  \label{PhiMrep}
  Let $h\in C_b$, $m\in \N$, and let $\phi_{m,h}$ be as in~(\ref{star}).
 Suppose that $\lambda \leq \xi \leq \mu$ and $\lambda \neq \mu$. Then
  \begin{align}
    \label{PhiMrepObj}
    \nonumber
    \phi_{m,h}(\lambda,\mu)&=\left(\frac{\lambda-\xi}{\lambda-\mu}\right)^m \phi_{m,h}(\lambda, \xi) + \left(\frac{\xi-\mu}{\lambda-\mu}\right)^m \phi_{m,h}(\xi, \mu)\\
    &\quad+\sum_{k = 1}^{m-1} C_{m-1}^{k-1}\left(\frac{\lambda-\xi}{\lambda-\mu}\right)^{m-k}
    \left(\frac{\xi-\mu}{\lambda-\mu}\right)^k \phi_{k,h}(\xi,\mu).
  \end{align}
\end{lemma}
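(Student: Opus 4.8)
The plan is to prove \eqref{PhiMrepObj} by splitting the defining integral \eqref{star} at the unique parameter value $t^\ast\in[0,1]$ at which $\lambda+(\mu-\lambda)t$ equals $\xi$, namely $t^\ast=\frac{\xi-\lambda}{\mu-\lambda}$ (this lies in $[0,1]$ precisely because $\lambda\le\xi\le\mu$ and $\lambda\ne\mu$), and then performing an affine change of variables on each of the two resulting pieces. Thus one writes
\[
\phi_{m,h}(\lambda,\mu)=\int_0^{t^\ast}t^{m-1}h\bigl(\lambda+(\mu-\lambda)t\bigr)\,dt+\int_{t^\ast}^1 t^{m-1}h\bigl(\lambda+(\mu-\lambda)t\bigr)\,dt.
\]

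In the first integral I would substitute $t=t^\ast u$ with $u\in[0,1]$; since $(\mu-\lambda)t^\ast=\xi-\lambda$, the argument of $h$ becomes $\lambda+(\xi-\lambda)u$ and the integral collapses to $(t^\ast)^m\int_0^1 u^{m-1}h\bigl(\lambda+(\xi-\lambda)u\bigr)\,du=(t^\ast)^m\,\phi_{m,h}(\lambda,\xi)$. Because $t^\ast=\frac{\lambda-\xi}{\lambda-\mu}$, this is exactly the first summand in \eqref{PhiMrepObj}. In the second integral I would substitute $t=t^\ast+(1-t^\ast)v$ with $v\in[0,1]$; using $(\mu-\lambda)(1-t^\ast)=\mu-\xi$ one finds that the argument of $h$ becomes $\xi+(\mu-\xi)v$, so this piece equals $(1-t^\ast)\int_0^1\bigl(t^\ast+(1-t^\ast)v\bigr)^{m-1}h\bigl(\xi+(\mu-\xi)v\bigr)\,dv$.

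To finish, expand $\bigl(t^\ast+(1-t^\ast)v\bigr)^{m-1}$ by the binomial theorem, integrate term by term, and recognize $\int_0^1 v^{j}h\bigl(\xi+(\mu-\xi)v\bigr)\,dv=\phi_{j+1,h}(\xi,\mu)$; after the substitution $k=j+1$ the second integral becomes $\sum_{k=1}^{m}C_{m-1}^{k-1}(t^\ast)^{m-k}(1-t^\ast)^{k}\phi_{k,h}(\xi,\mu)$. Inserting $t^\ast=\frac{\lambda-\xi}{\lambda-\mu}$ and $1-t^\ast=\frac{\xi-\mu}{\lambda-\mu}$, peeling off the $k=m$ term as the second summand of \eqref{PhiMrepObj}, and adding the contribution of the first integral yields \eqref{PhiMrepObj}. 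The computation is entirely elementary; the only points that genuinely need checking are the two identities $(\mu-\lambda)t^\ast=\xi-\lambda$ and $(\mu-\lambda)(1-t^\ast)=\mu-\xi$, which make the affine substitutions land on the correct arguments of $h$, together with the bookkeeping of the binomial reindexing. The degenerate cases $\lambda=\xi$ (so $t^\ast=0$) and $\xi=\mu$ (so $t^\ast=1$) cause no trouble, since then one of the two integrals vanishes while the corresponding term in \eqref{PhiMrepObj} carries a vanishing positive power of $\lambda-\xi$ or of $\xi-\mu$. For $m=1$ the sum in \eqref{PhiMrepObj} is empty and the identity reduces to the elementary decomposition \eqref{PhiMrepTriv}, which serves as a consistency check.
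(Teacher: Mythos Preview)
Your proof is correct and is essentially identical to the paper's own argument: the paper sets $\zeta=\frac{\lambda-\xi}{\lambda-\mu}$ and $\omega=\frac{\xi-\mu}{\lambda-\mu}$ (your $t^\ast$ and $1-t^\ast$), splits the integral as $\int_0^\zeta+\int_\zeta^1$, and applies the same two affine substitutions $t=\zeta t_1$ and $t=\zeta+\omega t_2$ followed by the binomial expansion. The only differences are notational.
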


\begin{proof}[Proof of Lemma~\ref{PhiMrep}]
Denote
$\zeta=\frac{\lambda-\xi}{\lambda-\mu}$ and $\omega=\frac{\xi-\mu}{\lambda-\mu}$.
We start with splitting
  \begin{equation}
  \label{PhiMrepObj1}
  \phi_{m,h}(\lambda,\mu)=\int_0^1 t^{m-1}h(\lambda+(\mu-\lambda)t)\,dt=\int_0^\zeta+\int_\zeta^1.
  \end{equation}
  We compute these integrals separately. Substituting $t = \zeta t_1$ in the first integral gives
  \begin{equation}
  \label{PhiMrepObj2}
  \int_0^\zeta = \int_0^1 \zeta^{m-1} t_1^{m-1} h (\lambda + (\xi -
  \lambda) t_1)\, \zeta d t_1 = \zeta^m \phi_{m,h} (\lambda, \xi).
  \end{equation}
Substituting~$t = \zeta + \omega t_2$ in the second integral and using
the Newton binomial formula gives
  \begin{multline}
    \int_\zeta^1 = \int_0^1 (\zeta + \omega t_2)^{m-1} h (\xi + (\mu -
    \xi) t_2)\, \omega dt_2 \\ =  \int_0^1 \omega\left[ \sum_{k =
        0}^{m-1} C_{m-1}^k \zeta^{m-k -1} \omega^{k} t_2^k \right]\,
    h(\xi + (\mu - \xi) t_2)\, dt_2 \\ = \int_0^1 \left[ \omega^m
      t_2^{m-1} + \sum_{k = 1}^{m-1} C_{m-1}^{k-1} \zeta^{m - k}
      \omega^k t_2^{k-1} \right]\, h(\xi + (\mu - \xi) t_2)\, dt_2 \\
   \label{PhiMrepObj3}
    = \omega^m \phi_{m,h} (\xi, \mu) + \sum_{k = 1}^{m-1} C_{m-1}^{k-1}
    \zeta^{m-k} \omega^k \phi_{k,h} (\xi, \mu).
  \end{multline}
Combining~(\ref{PhiMrepObj1}) - (\ref{PhiMrepObj3}) gives~(\ref{PhiMrepObj}).
\end{proof}

Now we prove Theorem~\ref{ImprovedDOI}.

\begin{proof}[Proof of Theorem~\ref{ImprovedDOI}]
  It is sufficient to prove the theorem for real-valued~$h \in C_b$
  such that~$\left\| h \right\|_\infty \leq 1$.

  Our objective is to show existence of a constant~$c_{\alpha, m} > 0$
  such that
  \begin{equation}
    \label{ImprovedDOIObj}
    \left| \tau \left( y T_{\phi_{m,h}}(x)\right) \right|
    \leq \, c_{\alpha, m} \, \left( 1 + \left\|
        T_{\phi_{m,h}}\right\|_\beta \right)
  \end{equation}
  for $x \in L^\alpha$ and $y \in L^{\alpha'}$ with~$\left\| x
  \right\|_\alpha = 1$ and~$\left\| y \right\|_{\alpha'} = 1$, where
  $\frac 12 = \frac 1\alpha + \frac 1 \beta$. As in the case of
  Lemma~\ref{ImprovedDOI-base-rec}, it is enough to prove
  (\ref{ImprovedDOIObj}) for $x$ being off-diagonal upper-triangular
  and $y$ being lower-triangular with $\tau$-finite left and right
  supports.  If~(\ref{ImprovedDOIObj}) is proved, then the remaining
  argument is similar to the verbatim repetition of the extrapolation
  trick in the proof of Theorem~\ref{PSLipschitzAlt}. The proof
  of~(\ref{ImprovedDOIObj}) is (only) computationally more difficult
  argument than the one in Lemma~\ref{ImprovedDOI-base-rec}, where the
  relation~(\ref{PhiMrepTriv}) is replaced with~(\ref{PhiMrepObj}).

Fix~$\epsilon > 0$ and  factorize~$y = a b$, where $a \in L^2$ and $b \in L^\beta$ are
  lower-triangular such that
  \begin{equation*}
    \label{PLfactorization1}
    1 \leq \left\| a \right\|_2 \, \left\| b \right\|_\beta \leq
    1 + \epsilon.
  \end{equation*}
  Keeping the same notation~$z_{\lambda\mu} = E_\lambda z E_\mu$, for
  every~$z \in M$, and arguing exactly as in
  Lemma~\ref{ImprovedDOI-base-rec}, we have $$ \tau \left( y
    T_{\phi_{m,h}} (x)\right) = \tau \left( a b
    T_{\phi_{m,h}}(x)\right) = \sum_{\lambda \leq \xi \leq \mu \atop
    \lambda \neq \mu} \phi_{m,h}(\lambda, \mu) \, \tau \left(
    a_{\mu\xi} b_{\xi\lambda} x_{\lambda\mu} \right). $$ Let~$g$ be
  the function from Lemma~\ref{DecompositionLemma}. Denote
  $x_s:=\sum_{\lambda<\mu}(\mu-\lambda)^{is}x_{\lambda\mu}$ and
  $b_s:=\sum_{\lambda<\xi}(\xi-\lambda)^{is}b_{\xi\lambda}$.  Applying
  Lemma \ref{MOI-algebra} and the representation~(\ref{PhiMrepObj})
  yields
  \begin{multline}
    \label{ImprovedDOIRep}
    \tau \left( y T_{\phi_{m,h}}(x)\right) = \int_\Rl g(s)\, \Biggl[
    \tau \left( a \left(T_{\bar \phi_{m,h}}(b)\right)_{ms} x_{-ms}
    \right) + \tau \left( \left(T_{\bar \phi_{m,h}}(a)\right)_{ms}b
      x_{-ms} \right) \\ + \sum_{k = 1}^{m-1} C_{m-1}^{k-1} \tau
    \left(T_{\phi_{k,h}} \left( a_{ks}\right) b_{ms - ks} x_{-ms}
    \right)\Biggr]\, ds,
  \end{multline}
  where $\bar \phi_{m,h}(\lambda,\mu):=\phi_{m,h}(\mu,\lambda)$.  First,
  we estimate the integrand components. By employing Remark
  \ref{phiHS} and the representation~(\ref{PhiMrepObj1}) and recalling
  $1\leq k\leq m$,
\begin{align*}
  \|T_{\phi_{k,h}}(a_{ks})\|_2 \leq
  \|\phi_{k,h}\|_\infty\|a_{ks}\|_2\leq\|h\|_\infty\|a\|_2\leq
  \|a\|_2,
\end{align*}
we obtain
$$\left| \tau \left( T_{\phi_{k,h}} \left( a_{ks}\right) b_{ms-ks}
    x_{-ms} \right) \right| \leq c_{\alpha}\|a\|_2 \|b_{ms-ks}\|_\beta
\|x_{-ms}\|_\alpha \leq c_{\alpha} (1+|ms|)^2(1+\epsilon).$$ Note
that, by Lemma~\ref{MOI-algebra}~(\ref{MOI-A-involution}), we have
$\left\| T_{\phi_{m,h}}\right\| = \left\| T_{\bar
    \phi_{m,h}}\right\|$.  Arguing as in
Lemma~\ref{ImprovedDOI-base-rec} implies
  \begin{align*}
    \left| \tau \left( a \left(T_{\bar \phi_{m,h}}(b)\right)_{ms}
        x_{-ms} \right) \right| &\leq \left\| a \right\|_2 \, \left\|
      \left(T_{\bar \phi_{m,h}}(b)\right)_{ms}
    \right\|_\beta \left\| x_{-ms} \right\|_\alpha \\
    &\leq \, c_\alpha \left( 1 + \left| ms \right| \right)^2\, \left\|
      a \right\|_2 \left\| T_{\bar \phi_{m,h}}(b) \right\|_\beta
    \left\| x \right\|_\alpha  \\
    &\leq \, c_\alpha \left( 1 + \left| ms \right| \right)^2\, \left\|
      T_{\phi_{m,h}}
    \right\|_\beta\, \left\| a \right\|_2 \, \left\| b \right\|_\beta\\
    &\leq \, c_{\alpha} \left( 1 + \left| ms \right| \right)^2\,
    \left\| T_{\phi_{m,h}} \right\|_\beta (1+\epsilon).
  \end{align*}
  By letting~$\epsilon \rightarrow 0$, we arrive at $$ \left| \tau
    \left( a \left(T_{\bar \phi_{m,h}}(b)\right)_{ms} x_{-ms} \right)
  \right| \leq c_{\alpha} \left( 1 + \left| ms \right| \right)^2\,
  \left\| T_{\phi_{m,h}} \right\|_\beta. $$ Similarly, $$ \left| \tau
    \left( \left(T_{\bar \phi_{m,h}}(a)\right)_{ms} b x_{-ms} \right)
  \right| \leq c_{\alpha} \, \left( 1 + \left| ms \right|
  \right)^2.  $$ Employing the triangle inequality
  in~(\ref{ImprovedDOIRep}), we see that
  \begin{align*}
    \left| \tau \left( a b T_{\phi_{m,h}}(x)\right) \right| &\leq \,
    c_{\alpha} \left( 1 + \left\| T_{\phi_{m,h}}\right\|_\beta
    \right)\, \int_\Rl \left| g(s) \right|\, \left( 1 +
      \left| ms \right| \right)^2\, ds \\
    &\leq\, c_{\alpha, m} \, \left( 1 + \left\| T_{\phi_{m,h}}
      \right\|_\beta \right).
  \end{align*}
  This proves~(\ref{ImprovedDOIObj}) and, hence, completes the proof
  of the theorem.
\end{proof}

\subsection*{The induction step.}

We need an extension of the decomposition~(\ref{SchemeReduction}) to
the case of higher dimensions.  Let $$ \tilde s =\left( s_1,
  \ldots, s_n \right) \in R_n,\ \ (s_0, \tilde s) \in S_n,\quad \text{
  with } s_0=\kappa-\sum_{j=1}^n s_j.$$ Given $h \in C_b$, and $p\in
\Pl_{n + 1}$, we introduce
\begin{equation}
\label{3stars}
\psi_{n, h, p} (\zeta, \tilde \mu) = \int_{S_n} p(\zeta, \tilde s)\, h
\left(\sum_{j = 0}^n s_j \mu_j \right)\, d\sigma_n,
\end{equation}
where $\zeta \in \Rl$, $\tilde \mu = \left( \mu_0, \ldots,
  \mu_{n} \right) \in \Rl^{n +1}$.

\begin{lemma}
  \label{DecomLemmaII}
  Let~$n \geq 2$, $h \in C_b$ and let~$p \in \Pl_n$. Denote $\tilde \lambda = \left( \lambda_3,
      \ldots, \lambda_n \right) \in \Rl^{n - 2}$ and assume that $\lambda_0 \leq
  \lambda_2 \leq \lambda_1$, with $\lambda_0 \neq \lambda_1$. Then there are polynomials~$q, r
  \in \Pl_n$ depending only on~$p$ such that the function $\phi_{n,h,p}$ given by~(\ref{2stars}) equals
  \begin{align}
    \label{DecomLemmaIIObj}
\nonumber
&\phi_{n,h,p}\left(\lambda_0,\lambda_1,\lambda_2,\tilde
\lambda \right)\\&\quad=\psi_{n-1, h, q} \left(\frac{\lambda_0-\lambda_2}{\lambda_0-\lambda_1}, \lambda_0,\lambda_2, \tilde \lambda \right)+ \psi_{n-1, h, r} \left(
\frac{\lambda_0-\lambda_2}{\lambda_0-\lambda_1}, \lambda_1,\lambda_2, \tilde \lambda \right).
  \end{align}
\end{lemma}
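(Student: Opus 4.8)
The plan is to absorb the middle variable $\lambda_2$ into the outer variables $\lambda_0,\lambda_1$ and then split the simplex $S_n$, mimicking in $n$ dimensions the one--dimensional splitting behind the identity \eqref{PhiMrepObj} of Lemma~\ref{PhiMrep}. Since $\lambda_0\le\lambda_2\le\lambda_1$ with $\lambda_0\neq\lambda_1$, one has $\lambda_2=(1-\zeta)\lambda_0+\zeta\lambda_1$, where $\zeta:=\frac{\lambda_0-\lambda_2}{\lambda_0-\lambda_1}\in[0,1]$. Substituting this into the definition \eqref{2stars} of $\phi_{n,h,p}$ and writing $s_0=1-\sum_{j=1}^n s_j$, the argument of $h$ becomes
\[
\big(s_0+(1-\zeta)s_2\big)\lambda_0+\big(s_1+\zeta s_2\big)\lambda_1+\sum_{j=3}^n s_j\lambda_j,
\]
so that $\phi_{n,h,p}(\lambda_0,\lambda_1,\lambda_2,\tilde\lambda)$ now sees $\lambda_0,\lambda_1$ only through the combined weights $s_0+(1-\zeta)s_2$ and $s_1+\zeta s_2$.

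First I would split $S_n$ into $A=\{s_1\le\zeta(s_0+s_1)\}$ and $B=\{s_1\ge\zeta(s_0+s_1)\}$, which overlap only in a null set and play the roles of the ranges $[0,\zeta]$ and $[\zeta,1]$ in the proof of Lemma~\ref{PhiMrep}. On $A$, using the permutation invariance of $d\sigma_n$ to take $(s_1,\dots,s_n)$ as free coordinates and \eqref{minv} to view $d\sigma_n$ as Lebesgue measure on $R_n$, I would change variables $(s_1,s_2)\mapsto r_1:=s_2+s_1/\zeta$ while keeping $r_j:=s_{j+1}$ for $2\le j\le n-1$. This map has unit Jacobian, it carries $A$ onto $\{\,0\le s_1\le\zeta r_1,\ (r_1,\dots,r_{n-1})\in R_{n-1}\,\}$ up to a null set, and a direct computation gives $s_1+\zeta s_2=\zeta r_1$ and $s_0+(1-\zeta)s_2=r_0+(1-\zeta)r_1$ with $r_0:=1-\sum_{j=1}^{n-1}r_j\ge0$; re-expressing $\lambda_2=(1-\zeta)\lambda_0+\zeta\lambda_1$ once more, the argument of $h$ collapses to $r_0\lambda_0+r_1\lambda_2+\sum_{j=2}^{n-1}r_j\lambda_{j+1}$, which crucially no longer depends on $s_1$. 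Integrating $s_1$ out over $[0,\zeta r_1]$ then converts $p$ into
\[
q(\zeta,r_1,\dots,r_{n-1}):=\int_0^{\zeta r_1}p\big(s_1,\;r_1-s_1/\zeta,\;r_2,\dots,r_{n-1}\big)\,ds_1,
\]
and comparison with \eqref{3stars} and \eqref{minv} identifies the integral over $A$ with $\psi_{n-1,h,q}(\zeta,\lambda_0,\lambda_2,\tilde\lambda)$. The piece $B$ is handled in exactly the same fashion after relabelling: now take $(s_0,s_2,\dots,s_n)$ as free coordinates, set $r_1:=s_2+s_0/(1-\zeta)$, integrate $s_0$ out over $[0,(1-\zeta)r_1]$, and obtain a polynomial $r$ with $\int_B=\psi_{n-1,h,r}(\zeta,\lambda_1,\lambda_2,\tilde\lambda)$. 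Summing the two contributions yields \eqref{DecomLemmaIIObj}; since $q$ and $r$ do not involve the $\lambda_j$, the identity extends by continuity to $\zeta\in\{0,1\}$, i.e.\ to $\lambda_2\in\{\lambda_0,\lambda_1\}$.

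I expect the real work to lie in two routine but delicate points: (i) the bookkeeping of the two changes of variables together with the disintegration of the surface measure $d\sigma_n$, making sure that after the substitution what survives is precisely $d\sigma_{n-1}$ on the smaller simplex (this is why I switch free coordinates between $A$ and $B$ rather than carry a Jacobian); and (ii) verifying that $q$ and $r$ are honest elements of $\Pl_n$. For (ii), expanding a monomial $s_1^{a}s_2^{b}r_2^{c_2}\cdots$ of $p$, writing $(r_1-s_1/\zeta)^b$ by the binomial theorem, and using $\int_0^{\zeta r_1}s_1^{a+i}\,ds_1=(\zeta r_1)^{a+i+1}/(a+i+1)$ shows that every negative power of $\zeta$ (and, on $B$, of $1-\zeta$) cancels, leaving a polynomial that depends only on $p$; this is all that the induction step in the proof of Theorem~\ref{MainTheoremExt} requires.
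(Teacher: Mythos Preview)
Your argument is correct, and the splitting of the simplex into $A=\{s_1\le\zeta(s_0+s_1)\}$ and $B$ is in fact exactly the partition the paper arrives at implicitly. The route, however, is genuinely different. The paper first freezes $(s_3,\ldots,s_n)$, reducing to a two--dimensional integral over $S_2^\kappa$; it then passes to the variables $t=s_0+s_1$, $s=s_1$, invokes a separate auxiliary lemma (Lemma~\ref{IntegralRel}) for the case $n=2$, whose proof substitutes $u=\xi+(\lambda-\xi)t+(\mu-\lambda)s$ and performs a Fubini change of order of integration, and finally reassembles the frozen variables. You bypass all of this by working directly on $S_n$: the single linear substitution $r_1=s_2+s_1/\zeta$ (respectively $r_1=s_2+s_0/(1-\zeta)$) simultaneously collapses the argument of $h$ to the $(n-1)$--simplex form and isolates a one--dimensional integral in $s_1$ (resp.\ $s_0$) that produces the polynomial $q$ (resp.\ $r$). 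Your approach is shorter and avoids the auxiliary lemma entirely; the paper's modular reduction has the virtue of localizing the computation in an explicit $2$--variable identity where the polynomials $q,r$ are written out in closed form (formula~\eqref{IntegralRelTemp1}), which is convenient when later, in the proof of Theorem~\ref{MainTheoremExt}, one needs to know that $r$ is a combination of terms $(1-\zeta)^m p_1(\tilde s)$. Your binomial verification in (ii) yields the same structural information (each term of $q$ carries a factor $\zeta^{a+1}$, and symmetrically for $r$), so nothing is lost.
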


We prove Lemma~\ref{DecomLemmaII} by reducing it to the special case $n=2$ discussed in the lemma below.

\begin{lemma}
  \label{IntegralRel}
  Let~$h \in C_b$ and let $m, k \in \N\cup\{0\}$. Let~$\kappa > 0$
  and $\lambda \leq \xi \leq \mu$, with $\lambda \neq \mu$. Then there are~$q, r \in \Pl_3$ such that
  \begin{multline*}
    \int_0^\kappa t^m dt \int_0^t s^k h(\kappa \xi + (\lambda - \xi) t + (
    \mu - \lambda) s)\, ds \\ = \int_0^\kappa q\left(\frac{\lambda-\xi}{\lambda -\mu}, \kappa, \theta\right)\, h(
    \kappa \xi + (\lambda - \xi) \theta )\, d\theta \\ + \int_0^\kappa
    r\left(\frac{\lambda-\xi}{\lambda -\mu}, \kappa, \sigma\right)\, h(\kappa \xi + (\mu - \xi) \sigma)\, d\sigma.
  \end{multline*}
  Here the polynomials~$q$ and~$r$ depend on~$m$ and~$k$, but do not depend on~$h$.
\end{lemma}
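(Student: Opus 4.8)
The plan is to view the left-hand side as a two-dimensional integral over the triangle $D=\{(t,s):0\le s\le t\le\kappa\}$ of $t^m s^k\,h(\Phi(t,s))$, where $\Phi(t,s):=\kappa\xi+(\lambda-\xi)t+(\mu-\lambda)s$, and then to split $D$ along the curve on which $\Phi$ equals $\kappa\xi$. Put $\zeta:=\frac{\lambda-\xi}{\lambda-\mu}=\frac{\xi-\lambda}{\mu-\lambda}$; since $\lambda\le\xi\le\mu$ and $\lambda\neq\mu$, we have $\zeta\in[0,1]$. For each fixed $t$ the map $s\mapsto\Phi(t,s)$ is affine and nondecreasing, with $\Phi(t,0)=\kappa\xi+(\lambda-\xi)t\le\kappa\xi$ and $\Phi(t,t)=\kappa\xi+(\mu-\xi)t\ge\kappa\xi$, and $\Phi(t,s)=\kappa\xi$ exactly when $s=\zeta t$. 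Accordingly I would write $D=D_1\cup D_2$ with $D_1=\{0\le t\le\kappa,\ 0\le s\le\zeta t\}$ and $D_2=\{0\le t\le\kappa,\ \zeta t\le s\le t\}$, and treat the two pieces separately.

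On $D_1$ I would pass from $(t,s)$ to $(t,\theta)$, where $\theta$ is defined by $\Phi(t,s)=\kappa\xi+(\lambda-\xi)\theta$, i.e.\ $s=\zeta(t-\theta)$; then $\theta$ runs over $[0,t]$ and $dt\,ds=\zeta\,dt\,d\theta$. Fubini's theorem gives
\[
\iint_{D_1}t^m s^k\,h(\Phi)\,dt\,ds=\zeta^{k+1}\int_0^\kappa h\big(\kappa\xi+(\lambda-\xi)\theta\big)\Big(\int_\theta^\kappa t^m(t-\theta)^k\,dt\Big)\,d\theta,
\]
and after the shift $t=\theta+v$ and the binomial expansion of $(\theta+v)^m$ the inner integral is plainly a polynomial in $\theta$ and $\kappa$. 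Hence the first summand has the desired form with $q(\zeta,\kappa,\theta):=\zeta^{k+1}\int_\theta^\kappa t^m(t-\theta)^k\,dt\in\Pl_3$, which depends only on $m$ and $k$.

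On $D_2$ I would pass from $(t,s)$ to $(t,\sigma)$, where $\sigma$ is defined by $\Phi(t,s)=\kappa\xi+(\mu-\xi)\sigma$; a short computation using $\frac{\xi-\lambda}{\mu-\lambda}=\zeta$ and $\frac{\mu-\xi}{\mu-\lambda}=1-\zeta$ shows $s=\zeta t+(1-\zeta)\sigma$, so $\sigma$ runs over $[0,t]$ and $dt\,ds=(1-\zeta)\,dt\,d\sigma$. Fubini now gives
\[
\iint_{D_2}t^m s^k\,h(\Phi)\,dt\,ds=(1-\zeta)\int_0^\kappa h\big(\kappa\xi+(\mu-\xi)\sigma\big)\Big(\int_\sigma^\kappa t^m\big(\zeta t+(1-\zeta)\sigma\big)^k\,dt\Big)\,d\sigma,
\]
and expanding $(\zeta t+(1-\zeta)\sigma)^k$ by the binomial theorem and integrating the monomials $t^{m+j}$ over $[\sigma,\kappa]$ exhibits the inner integral as a polynomial in $\zeta,\kappa,\sigma$. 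Thus $r(\zeta,\kappa,\sigma):=(1-\zeta)\int_\sigma^\kappa t^m\big(\zeta t+(1-\zeta)\sigma\big)^k\,dt\in\Pl_3$ depends only on $m,k$, and adding the contributions of $D_1$ and $D_2$ yields the claimed identity, since $\zeta=\frac{\lambda-\xi}{\lambda-\mu}$.

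The argument is essentially bookkeeping; the one point needing care is choosing the two substitutions so that the Jacobians are \emph{constant} in $t$, which is exactly what lets the $\theta$- and $\sigma$-integrals decouple from the remaining $t$-integral and keeps the coefficient functions polynomial. No separate treatment of degenerate configurations is needed: if $\xi=\lambda$ or $\xi=\mu$ then $\zeta\in\{0,1\}$, one of $D_1,D_2$ is a null set, and the corresponding coefficient $q$ or $r$ vanishes identically thanks to the factor $\zeta^{k+1}$ or $(1-\zeta)$.
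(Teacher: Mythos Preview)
Your proof is correct and follows essentially the same route as the paper's: split the triangle along the level set $\Phi=\kappa\xi$ (equivalently $s=\zeta t$), change variables so that the argument of $h$ becomes $\kappa\xi+(\lambda-\xi)\theta$ on one piece and $\kappa\xi+(\mu-\xi)\sigma$ on the other, and apply Fubini; you arrive at exactly the same polynomials $q(\zeta,\kappa,\theta)=\zeta^{k+1}\int_\theta^\kappa t^m(t-\theta)^k\,dt$ and $r(\zeta,\kappa,\sigma)=(1-\zeta)\int_\sigma^\kappa t^m(\zeta t+(1-\zeta)\sigma)^k\,dt$. The only cosmetic differences are that the paper introduces the intermediate variable $u=\Phi(t,s)$ before passing to $\theta,\sigma$ and first treats $\kappa=1$ then rescales, whereas you compose the two substitutions and handle general $\kappa$ directly, which is slightly tidier.
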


First, we prove the decomposition~(\ref{DecomLemmaIIObj}) and, then, auxiliary Lemma \ref{IntegralRel}.

\begin{proof}[Proof of Lemma~\ref{DecomLemmaII}]
 By the definitions of the functions $\phi_{n,h,p}$ and $\psi_{n-1, h, q}$, we have integration
over the simplex $S_n$ (for which $\sum_{j=0}^ns_j=1$) on both sides of~(\ref{DecomLemmaIIObj}).
Since $\sum_{j=0}^2s_j=1-\sum_{j=3}^ns_j$, we can split the integral over $S_n$ (which is defined on p. \pageref{Rn}) into the repeated integral
\begin{equation}
 \label{DecomLemmaIIFixInt}
 \int_{R_{n - 2}} ds_3 \ldots ds_n\int_{S_2^\kappa}ds_2ds_1ds_0,
 \end{equation}
where $\kappa=1-\sum_{j=3}^ns_j$. We fix the point~$\tilde s=\left( s_3, \ldots, s_n \right) \in R_{n - 2}$ (and, hence, fix $\kappa$ too) and consider the integrands over $S_2^\kappa$. On the left hand side, we have the integrand
  \begin{equation}
    \label{DecomLemmaIIIndI}
    \int_{S^\kappa_2} p_1 (s_0, s_1, s_2)
    h_1 \left( s_0 \lambda_0 + s_1 \lambda_1 + s_2 \lambda_2 \right)\,
    d \sigma_2,
  \end{equation}
  where we set $$p_1 (s_0, s_1,s_2) := p(s_1, s_2, s_2, \tilde s),\ \ h_1 (t) := h\left( t + \sum_{j
      = 3}^n s_j \lambda_j \right). $$ Note that
  \[s_0\lambda_0+s_1\lambda_1+s_2\lambda_2= (s_0+s_1+s_2)\lambda_2 +
  (s_0+s_1) (\lambda_0 - \lambda_2) + s_1 (\lambda_1-\lambda_0).\]
  Further, by recalling $\kappa=s_0+s_1+s_2$, making substitution
  $t=s_0+s_1$, and $s=s_1$ and setting~
  \begin{align}\label{la0la}
  \lambda_0=\lambda,\quad \lambda_1 = \mu,\quad \lambda_2 = \xi,
  \end{align}
we see that~(\ref{DecomLemmaIIIndI}) becomes
  \begin{equation}
    \label{DecomLemmaIIIndII}
    \int_0^\kappa dt \int_0^t p_2
    (t, s)\, h_1 (\kappa \xi + (\lambda - \xi) t + (\mu - \lambda) s)\,
    ds,
  \end{equation}
  where $$ p_2 (t, s) = p_1 (t - s, s, \kappa - t). $$ Note that~$p_2$
  is a polynomial of two variables~$t$ and~$s$, that is, it is a finite
  linear combination of monomials~$t^m s^k$ multiplied by certain
  powers of fixed $\tilde s$ (these powers are determined by
  $p$). Consequently, applying Lemma~\ref{IntegralRel}, we see
  that~(\ref{DecomLemmaIIIndII}) equals
  \begin{equation}
    \label{DecomLemmaIIIndIII}
    \int_0^\kappa q_1(\zeta, \kappa, \theta) h_1 (\kappa \xi + (\lambda - \xi)
    \theta)\, d\theta + \int_0^\kappa r_1(\zeta, \kappa, \sigma) h_1 (\kappa \xi +
    (\mu - \xi) \sigma)\, d\sigma,
  \end{equation}
where $\zeta=\frac{\lambda-\xi}{\lambda-\mu}$ and $q_1, r_1 \in \Pl_3$ are appropriate linear combinations of the polynomials from Lemma~\ref{IntegralRel} (the precise
representation for which is given in~(\ref{IntegralRelTemp1}) below).  If we
substitute now~$s_0 = \theta$, $s_2 = \kappa - \theta$ in the first
integral and~$s_1 = \sigma$, $s_2 = \kappa - \sigma$ in the second
one, and recall the equalities~(\ref{la0la}), then we see that~(\ref{DecomLemmaIIIndIII}) equals
\begin{align*}
&\int_{S_1^\kappa} q_1 \left(\frac{\lambda_0-\lambda_2}{\lambda_0-\lambda_1}, \kappa, s_0\right)\, h_1 \left( s_0 \lambda_0 +s_2 \lambda_2 \right) d\sigma_1\\
&\quad+ \int_{S_1^\kappa} r_1 \left(\frac{\lambda_0-\lambda_2}{\lambda_0-\lambda_1},
\kappa, s_1\right) h_1 \left( s_1 \lambda_1 + s_2 \lambda_2 \right)d\sigma_1.
\end{align*}
Recall that $p_2$ was obtained from $p$ by fixing the value of $\tilde s$. By
letting $\tilde s$ vary, we obtain from $q_1$ and $r_1$ in $P_3$ such
polynomials $q$ and $r$ in $P_n$ that
  \begin{align*}
    &\int_{S_2^\kappa}p(s_0,s_1,s_2,\tilde s)
    h\left(s_0\lambda_0+s_1\lambda_1+s_2\lambda_2+\sum_{j=3}^n s_j\lambda_j\right)\,d\sigma_2\\
    &\quad=\int_{S_1^\kappa}q\left(\frac{\lambda_0-\lambda_2}{\lambda_0-\lambda_1},\kappa,s_0,\tilde
    s\right)h\left(s_0\lambda_0+s_2\lambda_2+\sum_{j=3}^n s_j\lambda_j\right)\,d\sigma_1\\
    &\quad\quad+\int_{S_1^\kappa}r\left(\frac{\lambda_0-\lambda_2}{\lambda_0-\lambda_1},\kappa,s_1,\tilde
    s\right)h\left(s_1\lambda_1+s_2\lambda_2+\sum_{j=3}^n s_j\lambda_j\right)\,d\sigma_1.
  \end{align*}
Now we integrate the latter expression over $R_{n-2}$ with respect to $\tilde s$ and obtain
(\ref{DecomLemmaIIObj}).
\end{proof}

\begin{proof}[Poof of Lemma~\ref{IntegralRel}]
  We first prove the case~$\kappa = 1$.  Let us compute the integral on
  the left hand side
  \begin{equation}
  \label{IntegralRelTemp0}
  \text{LHS} := \int_0^1 t^m dt \int_0^t s^k
  h(\xi + (\lambda - \xi) t + ( \mu - \lambda) s)\, ds
  \end{equation}
by substituting $$ u = \xi + (\lambda - \xi) t + (\mu - \lambda) s. $$
  We have $$ \text{LHS} = \int_0^1 t^m dt \int_{\xi + (\lambda - \xi)
    t}^{\xi + (\mu - \xi) t} \left[ \frac { u - \xi - (\lambda - \xi)
      t}{\mu - \lambda} \right]^k\, \frac {h(u) \, du}{\mu -
    \lambda}. $$ Next, we observe that changing the order of integration
  yields $$ \int_0^1 dt \int_{\xi + (\lambda - \xi) t}^{\xi + (\mu -
    \xi) t} du = \int_\lambda^\xi du \int_{\frac { u - \xi}{\lambda -
      \xi}}^1 dt + \int_{\xi}^\mu du \int_{\frac {u - \xi}{\mu -
      \xi}}^1 dt. $$ With this change of the order, we obtain
  \begin{multline*}
    \text{LHS} = \int_{\lambda}^\xi \frac {h(u)\, du}{\mu -
      \lambda}\, \int_{\frac {u -\xi}{\lambda - \xi}}^1 \left[ \frac { u
        - \xi - (\lambda - \xi) t}{\mu - \lambda} \right]^k t^m\, dt
    \\ + \int_{\xi}^\mu \frac {h(u)\, du}{\mu - \lambda}\, \int_{\frac
      {u - \xi}{\mu - \xi}}^1 \left[ \frac { u - \xi - (\lambda - \xi)
        t}{\mu - \lambda} \right]^k t^m\, dt.
  \end{multline*}
  Further, we substitute $$\zeta=\frac{\lambda-\xi}{\lambda -\mu},\quad \theta = \frac {u - \xi}{\lambda - \xi},\quad\text{and}\quad \sigma = \frac {u - \xi}{\mu - \xi} $$ in the
  first and in the second integrals, respectively.  This gives
  \begin{multline}
    \label{IntegralRelTemp}
    \text{LHS} = \zeta \int_0^1 h(\xi + (\lambda - \xi) \theta)\, d\theta
    \int_{\theta}^1 \zeta^k \left( t - \theta \right)^k t^m \, dt \\ + (1
    - \zeta) \int_0^1 h(\xi + (\mu - \xi)\sigma)\, d\sigma \int_{\sigma}^1
    \left( (1 - \zeta) \sigma - \zeta t\right)^k t^m \, dt.
  \end{multline}
  Setting
  \begin{multline*}
    q(\zeta, \theta) := \zeta^{k + 1} \int_{\theta}^1 \left( t - \theta
    \right)^k t^m dt \quad\text{and}\\ r(\zeta, \sigma) := (1 - \zeta)
    \int_{\sigma}^1 \left( (1 - \zeta)\, \sigma - \zeta t\right)^k t^m \, dt
  \end{multline*}
  and observing that~$q, r \in \Pl_2$ finishes the proof when~$\kappa =1$.

  In order to see the case with arbitrary~$\kappa > 0$, we
  replace~$\lambda$, $\mu$, and~$\xi$ in~(\ref{IntegralRelTemp0})
  with~$\kappa \lambda_1$, $\kappa \mu_1$, and~$\kappa \xi_1$ and
  substitute $$ t_1 = \kappa t,\ \ s_1 = \kappa s,\ \ \theta_1 = \kappa
  \theta, \ \ \text{and}\ \ \sigma_1 = \kappa \sigma. $$ This gives
  \begin{multline*}
    \int_0^\kappa \frac {t_1^m dt_1}{\kappa^{m + 1}} \int_0^{t_1} h(\kappa
    \xi_1 + (\lambda_1 - \xi_1) t_1 + (\mu_1 - \lambda_1) s_1)\, \frac
    {s_1^k ds_1}{\kappa^{k + 1}} \\ = \zeta^{k + 1} \int_0^\kappa h(\kappa \xi_1
    + (\lambda_1 - \xi_1) \theta_1) \frac {d \theta_1}{\kappa}
    \int_{{\theta_1}}^\kappa (t_1 - \theta_1)^k t_1^m \frac
    {dt_1}{\kappa^{k + m + 1}} \\ + (1 - \zeta) \int_0^\kappa h(\kappa \xi_1 +
    (\mu_1 - \xi_1) \sigma_1) \, \frac {d\sigma_1}{\kappa}
    \int_{\sigma_1}^\kappa \left( (1 - \zeta) \sigma_1 - \zeta t_1 \right)^k
    t_1^m \frac {dt_1}{\kappa^{k + m + 1}}.
  \end{multline*}
  We cancel the factor~$\kappa^{-m-k-2}$ on both sides of the previous calculation and set
  \begin{multline}
  \label{IntegralRelTemp1}
    q(\zeta, \kappa, \theta_1) := \zeta^{k + 1} \int_{\theta_1}^\kappa (t_1 -
    \theta_1)^k t_1^m dt_1\quad\text{and}\\ r(\zeta, \kappa, \sigma_1) := (1
    - \zeta) \int_{\sigma_1}^\kappa \left( (1 - \zeta) \sigma_1 - \zeta t_1 \right)^k
    t_1^m dt_1.
  \end{multline}
  Observing $q, r \in \Pl_3$ finishes the proof in the general case $\kappa > 0$.
\end{proof}

We are now ready to complete the proof of Theorem~\ref{MainTheoremExt}.

\begin{proof}[Proof of Theorem~\ref{MainTheoremExt}]
  It is sufficient to prove the theorem for real-valued~$h \in C_b$.
  It is also sufficient to prove the theorem when~$H$ has spectrum at
  integral points (see Lemma \ref{ToDiscreteMeasure}) and consider
  $\phi_{n,h,p}$ only on compact sets (see Lemma
  \ref{CompactSupportTphi}).

  The proof is by the method of the mathematical induction with
  respect to~$n \in \N$.  The base of the induction, i.e., the case~$n
  = 1$, is done in Theorem~\ref{ImprovedDOI}.

  Let us assume that~$n > 1$ and that the theorem is proved for~$n -
  1$, that is, we have
  \begin{equation}
  \label{f2}
  \left\| T_{\phi_{n-1, h, p}} \right\| \leq c_p\, \left\| h
  \right\|_\infty.
  \end{equation}
  Let~$1 < \alpha_0 < \infty$ be such that $1 = \frac 1{\alpha_0} +
  \frac 1{\alpha_1} + \ldots + \frac 1{\alpha_n} $ and let~$x_j \in
  L^{\alpha_j}$ be such that~$\left\| x_j \right\|_{\alpha_j} \leq 1$,
  $0\leq j\leq n$.  We shall show that
  \begin{equation}
    \label{MainTheoremExtObj}
    \sup_{x_j \in L^{\alpha_j},\; \|x_j\|_{\alpha_j}\leq 1}\left| \tau
      \left( x_0 T_{\phi_{n, h, p}} \left(\tilde x \right) \right)
    \right| \leq c_p\, \left\| h \right\|_\infty.
  \end{equation}
  Recall that (see~(\ref{f1})) $$ \tau\left( x_0 T_{\phi_{n, h, p}}
    \left( \tilde x \right) \right) = \sum_{\tilde l \in \Z^{n + 1}}
  \phi_{n, h, p} \left( \tilde l \right) \tau \left( \prod_{j = 0}^n
    x_j E_{l_j} \right),\ \ \tilde l = \left( l_0, l_1, \ldots, l_n
  \right). $$

Let $D$ denote the diagonal subset of $\Z^{n + 1}$, that is,
\begin{equation*}
D=\{(l_0,l_1,\dots,l_n):\ l_0=l_1=\dots=l_n\}.
\end{equation*}
Since for $(l_0,l_1,\dots,l_n)\in D$,
\begin{equation*}
\phi_{n,h,p}(l_0,l_1,\dots,l_n)=h(l_0)\int_{S_n}p(\tilde s)\,d\sigma_n,
\end{equation*}
the 
multilinear transformation $T_{\phi_{n,h,p}}$ is diagonal and
trivially bounded by~$\left\| h \right\|_\infty$.  Let~$\epsilon =
(\epsilon_1, \epsilon_2, \ldots, \epsilon_n)$, $\epsilon_j = \pm 1$
and let~$K_\epsilon \subseteq \Z^{n+1}$ be such that
  \begin{equation*}
    K_\epsilon = \Bigl\{\left( l_0, \ldots, l_n \right)
    \in \Z^{n + 1}:\ \ l_{j-1} \leq l_j\
    \text{if~$\epsilon_j = 1$};\ \ l_{j-1} >
    l_j\ \text{if~$\epsilon_j = -1$},\ \ 1\leq j\leq n \Bigr\}.
  \end{equation*}

  The space~$\Z^{n + 1}\setminus D$ splits into the disjoint union of~$2^n$
  sets~$K_\epsilon$, $\epsilon\in\{-1,1\}^n$.
  Fix~$\epsilon \in \left\{-1, 1\right\}^n$.  There
  is an index~$0 \leq j_\epsilon \leq n$ such that
\begin{equation}
\label{jepsilon}
  \left( l_0,\ldots, l_n \right) \in K_\epsilon\ \ \Longrightarrow\ \
  l_{j_\epsilon - 1} \leq l_{j_\epsilon} \ \ \text{and}\ \  l_{j_\epsilon}> l_{j_\epsilon + 1},
\end{equation}
where the decrement and increment of the indices~$j_\epsilon - 1$ and~$j_\epsilon + 1$ are
  understood modulo~$n$, i.e., if~$j = 0$, then~$j -1 = n$ and if~$j =
  n$, then~$j + 1 = 0$. Next, by fixing~$j_\epsilon$, we further
  split~$K_\epsilon$ into subsets~$K_{\epsilon, i}$, $i = 0, 1$, where
  \begin{multline*}
    K_{\epsilon, 0} = \left\{ \left( l_0, \ldots,
        l_n\right) \in K_\epsilon:\ \ l_{j_\epsilon-1}
      \leq l_{j_\epsilon +
        1} \right\}\quad  \text{and} \\
    K_{\epsilon, 1} = \left\{ \left( l_0, \ldots,
        l_n\right) \in K_\epsilon:\ \ l_{j_\epsilon-1} >
      l_{j_\epsilon + 1} \right\}.
  \end{multline*}  The space~$\Z^{n + 1}\setminus D$
    splits into the disjoint union of~$2^{n + 1}$ sets~$K_{\epsilon,
      i}$, i.e., $$ \Z^{n + 1}\setminus D =
    \bigcup_{\epsilon\in\{-1,1\}^n}\,
  \bigcup_{i = 0, 1} K_{\epsilon,i}. $$  This means that
  $$T_{\phi_{n, h, p}} = \sum_{\epsilon} \sum_{i =
    0,1}T^{\epsilon,i}_{\phi_{n,h,p}},$$ where
  \begin{equation*}
T^{\epsilon, i}_{\phi_{n, h,p}}(\tilde x)=
    \sum_{\tilde l \in K_{\epsilon, i}} \phi_{n, h, p} \left( \tilde l \right)\,
    E_{l_0}x_1 E_{l_1}x_2\dots x_n E_{l_n}.
  \end{equation*}
We show~(\ref{MainTheoremExtObj}) for each~$T^{\epsilon,i}_{\phi_{n,h,p}}$.
This will finish the proof of the theorem.

We fix~$\epsilon = \left( \epsilon_1, \ldots, \epsilon_n
\right)\in\{-1,1\}^n$, fix the index $j_\epsilon\in\{0,1,\dots,n\}$ as in~(\ref{jepsilon}), and let
$i =0, 1$.  We then have
  \begin{equation*}
    \left( l_0, l_1, \ldots, l_n \right) \in
    K_{\epsilon, i}\ \ \Longrightarrow
    \begin{cases} l_{j_\epsilon-1} \leq
    l_{j_\epsilon + 1} < l_{j_\epsilon}& \text{ if } i = 0, \\
    l_{j_\epsilon + 1} < l_{j_\epsilon - 1}
    \leq l_{j_\epsilon}& \text{ if } i = 1.
    \end{cases}
  \end{equation*}
  By shifting and reversing if~$i = 1$ (as in Lemma~\ref{MOI-algebra}
  (\ref{MOI-A-involution}) and~(\ref{MOI-A-duality})) the enumeration
  of the variables~$l_j$ and operators~$x_j$, we may assume that $$
  l_0 \leq l_2 < l_1. $$

  Now we apply Lemma~\ref{DecomLemmaII}.  Let~$q, r \in P_{n}$ be such
  that $$ \phi_{n, h, p} (l_0, l_1, l_2, \ldots,l_n) = \psi_{n-1, h,
    r} (\zeta, l_1, l_2, \ldots,l_n) + \psi_{n-1, h, q} (\zeta, l_0,
  l_2, \ldots,l_n),$$ where $\zeta=\frac{l_0-l_2}{l_0-l_1}$.
  Let~$Q_{n-1, h,r}$ and~$R_{n-1, h, q}$ be the
 multilinear transformations corresponding to the right hand side functions~$\psi_{n-1,
    h, r}$ and~$\psi_{n-1, h, q}$ via Definition \ref{nmoi}.  From the above decomposition we
  obtain $$ T^{\epsilon, i}_{\phi_{n, h, p}} = Q_{n-1, h, r} + R_{n-1,
    h, q}. $$ We shall show that the
  estimate~(\ref{MainTheoremExtObj}) holds for both~$Q_{n-1, h, r}$
  and~$R_{n-1, h, q}$.  We show it for~$Q_{n-1, h, r}$; for~$R_{n-1,h,
    q}$ the proof is similar.

Let~$\tilde s = (s_1, \ldots, s_{n}) \in S_{n-1}$.  Since the
polynomial~$r(\zeta, \tilde s)$ is a linear combination of the
polynomials~$r_1(\zeta,\tilde s)=(1-\zeta)^m p_1 (\tilde s)$, with $m\geq 0$ and $p_1 \in \Pl_{n-1}$
(see the integration in~(\ref{IntegralRelTemp1})), it is sufficient to prove
(\ref{MainTheoremExtObj}) for~$Q_{n-1, h, r_1}$, i.e., where~$r$ in $Q_{n-1, h, r}$ is replaced with~$r_1$.

From~(\ref{2stars}) and~(\ref{3stars}), we have
$$\psi_{n-1, h, r_1}=(1-\zeta)^m \phi_{n-1, h, p_1}.$$
Let~$g$ be as in Lemma~\ref{DecompositionLemma} and let, as in Lemma~\ref{MarcCorl},
  \begin{equation*}
    x_{s, 1} = \sum_{l_0<l_1} (l_1-l_0)^{is} E_{l_0} x_1 E_{l_1}\ \
    \text{and}\ \  x_{s, 2} = \sum_{l_2<l_1}(l_1 - l_2)^{is} E_{l_2} x_2
    E_{l_1}.
  \end{equation*}
By Lemma~\ref{DecompositionLemma},
$$(1 - \zeta)^m = \left( \frac {l_2 - l_1} {l_0 -
    l_1} \right)^m = \int_\Rl g(s) \, \left( l_1 - l_2 \right)^{ms}
\left( l_1 - l_0 \right)^{-ms}\, ds.$$ Applying~(\ref{MOI-A-product})
and~(\ref{MOI-A-composition}) of Lemma~\ref{MOI-algebra} gives the
reduction $$Q_{n-1,h,r_1}\left(x_1,x_2,\tilde x\right)=\int_\Rl
g(s)\,x_{-ms,1}\,T_{\phi_{n-1,h,p_1}} \left(x_{ms,2},\tilde
  x\right)\,ds,\ \ \tilde x=(x_3,\ldots,x_n), $$ of the problem of
order $n$ to the problem of order $n-1$, i.e., $Q_{n-1,h,r_1}$ is the
integral of order $n$ and $T_{\phi_{n-1,h,p_1}}$ is the integral of
order $n-1$.

Recall that by the assumption of the induction, the
operator~$T_{\phi_{n-1, h, p_1}}$ is bounded (see~(\ref{f2})).  Recall
also that by Lemma~\ref{MarcCorl}, $$ \left\| x_{-ms,
    1}\right\|_{\alpha_1} \leq c_{\alpha_1} \, \left( 1 + \left| ms
  \right| \right) \ \ \text{and}\ \ \left\| x_{ms,
    2}\right\|_{\alpha_2} \leq c_{\alpha_2} \, \left( 1 + \left| ms
  \right| \right)^2. $$ Thus, $$ \left\| Q_{n-1, h, r_1} \right\| \leq
c_{p_1,\alpha_1,\dots,\alpha_n} \, \int_{\Rl} \left| g(s) \right|\,
\left( 1 + \left| ms \right| \right)^2\, ds\cdot\|h\|_\infty \leq
c_{m, p_1, \alpha_1,\dots,\alpha_n} \cdot\|h\|_\infty, $$ The latter
justifies~(\ref{MainTheoremExtObj}) for~$Q_{n-1, h, r_1}$ and, thus,
finishes the proof.
\end{proof}

We conclude our exposition with the proof of Theorem \ref{MainTheoremCor}.

\begin{proof}[Proof of Theorem \ref{MainTheoremCor}]
The facts $\frac{d^n}{dt^{n}}\left[f\left(H_t\right)\right]\in \mL^1$ and $\Delta_{n,f}(H,V)\in \mL^1$ can be derived from \cite{AzCaDoSu2009,PellerMult}.
We prove the second fact; the first one is even simpler.
 From~\cite[Theorem~1.43 and~1.45]{Schwartz1969}, we have
\begin{equation}
    \label{SSFRemainderTemp}
    \Delta_{n, f} (H, V) = \frac 1{(n-1)!}\, \int_0^1 (1 - t)^{n-1}
    \frac {d^n}{dt^{n}} \left[ f \left( H_t \right) \right]\, dt,
  \end{equation} where, by~\cite[Theorem~5.7]{AzCaDoSu2009}, the operator derivative is well defined and equals
\begin{equation}
    \label{SSFFrechetTemp}
    \frac {d^n}{dt^n} \left[ f(H_t) \right] = n!\, T_{t, f^{[n]}} \bigl(
    \underbrace{V, \ldots, V}_{\text{$n$-times}} \bigr).
\end{equation}
For the operator~$T_{t, f^{[n]}}$, by Lemmas \ref{ClassNewDef}, \ref{ddpim}, and \ref{PhiMHprop}, we have $\left\|T_{t, f^{[n]}}\right\|\leq \left\| f \right\|_{\W_n}$ and, therefore,
\begin{equation}
\label{norm1est}
\Delta_{n, f} (H, V) \in \mL^1  \ \ \text{and}\ \ \left\| \Delta_{n, f} (H, V) \right\|_{\mL_1} \leq \,
  \left\| f \right\|_{\W_n} \, \left\| V\right\|_n^n.
\end{equation}
We shall now justify the estimate~(\ref{TextPrincipalEstimate}).
 Using~(\ref{SSFFrechetTemp}) in~(\ref{SSFRemainderTemp}), we see that $$ \Delta_{n, f} (H, V) = n \, \int_0^1 (1 - t)^{n-1} T_{t,f^{[n]}} \bigl(\underbrace{V, \ldots,V}_{\text{$n$-times}}\bigr)\, dt. $$
By \cite[Lemma 3.10]{AzCaDoSu2009}, we have
\begin{equation}
\label{f3}
\tau\left(\Delta_{n,f}(H,V)\right)=
n\,\int_0^1(1-t)^{n-1}\tau\left(T_{t,f^{[n]}}\bigl(\underbrace{V,\ldots,
V}_{\text{$n$-times}}\bigr)\right)\,dt.
\end{equation}
Thus, to show~(\ref{TextPrincipalEstimate}), it is sufficient to see that there is a constant~$c_n > 0$ such that
  \begin{equation}
    \label{SSFRemainderObj}
    \left| \tau \left( T_{t,f^{[n]}} \bigl(\underbrace{V, \ldots,
          V}_{\text{$n$-times}}\bigr) \right) \right| \leq \, c_n \,
    \left\| f^{(n)}\right\|_\infty \, \left\| V\right\|_n^n,\ \ 0 \leq t\leq 1.
  \end{equation}
One can derive from the representation~(\ref{ClassNewDefRep}) for $T_{f^{[n]}}$ that
  \begin{equation}
    \label{SSFRemainderTempI}
    \tau \left( T_{t, f^{[n]}} \bigl(\underbrace{V, \ldots,
        V}_{\text{$n$-times}}\bigr) \right) = \tau \left( T_{t,
        \phi} \bigl(\underbrace{V,
        \ldots,V}_{\text{$(n-1)$-times}}\bigr)\, V \right),
  \end{equation}
  where~$T_{t, \phi}$ is the multiple operator integral associated
  with~$H_t$ and $$ \phi(\lambda_0, \lambda_1, \ldots, \lambda_{n-1})=
  f^{[n]} \left( \lambda_0, \lambda_0, \lambda_1, \ldots,\lambda_{n-1}
  \right)$$ (for more details, see, e.g., \cite[Lemma 3.8]{moissf}).
  Noting that~$\phi$ is~$\phi_{n-1, h, p}$ with~$h = f^{(n)}$
  and~$p(s_1, \ldots, s_{n-1}) = 1 - \sum_{j = 1}^{n-1} s_j$, from
  Theorem~\ref{MainTheoremExt}, we obtain
\begin{equation}
    \label{SSFRemainderKey}
    \left\| T_{t, \phi}\bigl(\underbrace{V, \ldots, V}_{\text{$(n-1)
          $-times}}\bigr)
    \right\|_{\frac n{n-1}} \leq c_n\, \left\| f^{(n)}\right\|_\infty\,
    \left\| V \right\|_n^{n-1},\ \ 0 \leq t \leq 1.
  \end{equation}
  Clearly, (\ref{f3}) and~(\ref{SSFRemainderKey}) combined
  with~(\ref{SSFRemainderTempI}) imply~(\ref{SSFRemainderObj}).
  Thus, (\ref{TextPrincipalEstimate}) follows.
To prove the continuity of $V\mapsto \tau\left(\Delta_{n,f}(H,V)\right)$, we notice that
\begin{align}
\label{f4}
\nonumber
&\left|\tau\left(\Delta_{n,f}(H,V_j) - \Delta_{n,f}(H,V_k)\right)\right|\\
&\quad=\left|\frac 1{(n-1)!}\, \int_0^1 (1 - t)^{n-1}
  \tau\left(T_{t,f^{[n]}}\bigl( \underbrace{V_j,
      \ldots,V_j}_{\text{$n$-times}} \bigr)
    -T_{t,f^{[n]}}\bigl(\underbrace{V_k,
      \ldots,V_k}_{\text{$n$-times}}\bigr)\right)\, dt\right|.
  \end{align}
  Similarly to~(\ref{SSFRemainderObj}), it follows from
  Theorem~\ref{MainTheoremExt} that
\begin{align}\label{Tfn_estimate}\left|\tau
    \left(T_{t,f^{[n]}}\bigl(x_1,\ldots,x_n\bigr) \right)\right|\leq
  \, c_n\, \left\|x_1\right\|_n \cdot \ldots \cdot
  \left\|x_n\right\|_n,\ \ 0 \leq t \leq 1,
\end{align} for every $x_1, \ldots, x_n\in L^n$ and for every~$f \in
\left( \cap_{k =0}^n W_k \right) \cap B$.  Combining~(\ref{f4}) and
(\ref{Tfn_estimate}) completes the proof.
\end{proof}


\bibliographystyle{plain}

\end{document}